\documentclass[11pt,letterpaper,reqno,final]{amsart}

\usepackage[english]{babel}

\usepackage{amsfonts,amsmath,amssymb}
\usepackage{amsthm}
\usepackage{url}
\usepackage{ifthen}
\usepackage{ifpdf}
\usepackage{colonequals}
\usepackage{enumerate}

\ifpdf
\usepackage{aeguill}
\usepackage[pdftex]{graphicx}
\else
\usepackage[dvips]{graphicx}
\fi

\title[Optimization and Unit-Distance Representations of Graphs]{%
  Optimization Problems over\\Unit-Distance Representations of Graphs
}
\author{Marcel K.\ de Carli Silva \and Levent Tun\c{c}el}
\date{\today}

\newtheorem{theorem}{Theorem}[section]
\newtheorem{proposition}[theorem]{Proposition}
\newtheorem{corollary}[theorem]{Corollary}

\ifpdf
\usepackage[pdftex,%
pdfauthor={Marcel K. de Carli Silva and Levent Tuncel},%
pdftitle={Optimization Problems over Unit-Distance Representations of Graphs},%
hypertexnames=true,bookmarks=true,pagebackref]{hyperref}
\else
\fi

\numberwithin{equation}{section}

\DeclareMathOperator{\Diag}{Diag}
\DeclareMathOperator{\Image}{Im}
\DeclareMathOperator{\Null}{Null}
\DeclareMathOperator{\SOCop}{SOC}
\DeclareMathOperator{\conv}{conv}
\DeclareMathOperator{\diag}{diag}
\DeclareMathOperator{\linspan}{span}
\DeclareMathOperator{\thetabody}{TH}
\DeclareMathOperator{\trace}{Tr}
\newcommand{\Acal}{\mathcal{A}}
\newcommand{\Ecal}{\mathcal{E}}
\newcommand{\Gallai}{Gallai}
\newcommand{\Integers}{\mathbb{Z}}
\newcommand{\Laplacian}[2][]{\Lcal_{\ifthenelse{\equal{#1}{}}{G}{#1}}(#2)}
\newcommand{\Lcal}{\mathcal{L}}
\newcommand{\Lovasz}{Lov\'asz}
\newcommand{\Orthraw}{\mathbb{O}}
\newcommand{\Orth}[1][]{\Orthraw^{\ifthenelse{\equal{#1}{}}{n}{#1}}}
\newcommand{\Pd}[1][]{\Symraw_{++}^{\ifthenelse{\equal{#1}{}}{n}{#1}}}
\newcommand{\Psd}[1][]{\Symraw_+^{\ifthenelse{\equal{#1}{}}{n}{#1}}}
\newcommand{\Reals}{\mathbb{R}}
\newcommand{\Saxe}{Saxe}
\newcommand{\Symraw}{\mathbb{S}}
\newcommand{\Sym}[1][]{\Symraw^{\ifthenelse{\equal{#1}{}}{n}{#1}}}
\newcommand{\Szegedy}{Szegedy}
\newcommand{\Ucal}{\mathcal{U}}
\newcommand{\Xb}{\bar{X}}
\newcommand{\Xh}{\hat{X}}
\newcommand{\card}[2][]{#1| #2 #1|}
\newcommand{\drop}{\setminus}
\newcommand{\eps}{\varepsilon}
\newcommand{\ffrom}{\colon}
\newcommand{\fto}{\to}
\newcommand{\iprodt}[2]{\transp{#1}#2}
\newcommand{\iprod}[2]{\langle #1, #2 \rangle}
\newcommand{\myemptyset}{\varnothing}
\newcommand{\myhalf}{\textstyle \frac{1}{2}}
\newcommand{\nbd}{\nobreakdash}
\newcommand{\norm}[2][]{\|#2\|_{#1}}
\newcommand{\ones}{\bar{e}}
\newcommand{\oprodsym}[1]{\oprod{#1}{#1}}
\newcommand{\oprod}[2]{#1#2^{T}}
\newcommand{\paren}[2][]{#1({#2}#1)}
\newcommand{\qform}[2]{\transp{#2}#1#2}
\newcommand{\setst}[3][]{#1\{\,{#2}\,\colon{#3} #1\}}
\newcommand{\set}[2][]{#1\{ {#2} #1\}}
\newcommand{\soc}[1]{\SOCop_{#1}}
\newcommand{\sqbrac}[2][]{#1[{#2}#1]}
\newcommand{\textdefn}[1]{\emph{#1} \index{#1}}
\newcommand{\textdef}[1]{\textdefn{#1}}
\newcommand{\transp}[1]{#1^T}
\newcommand{\yb}{\bar{y}}
\newcommand{\yh}{\hat{y}}
\newcommand{\zb}{\bar{z}}
\newcommand{\zh}{\hat{z}}

\newenvironment{optproblem}[1][]%
  {%
    \begin{equation}
      \ifthenelse{\equal{#1}{}}{}{\label{#1}}
      \begin{array}{rll}
  }%
  {%
      \end{array}
    \end{equation}
    \ignorespacesafterend
  }

\begin{document}

\begin{abstract}
  We study the relationship between unit-distance representations and
  \Lovasz{} theta number of graphs, originally established by
  \Lovasz. We derive and prove min-max theorems. This framework
  allows us to derive a weighted version of the hypersphere number of
  a graph and a related min-max theorem. Then, we connect to sandwich
  theorems via graph homomorphisms. We present and study a
  generalization of the hypersphere number of a graph and the related
  optimization problems. The generalized problem involves finding the
  smallest ellipsoid of a given shape which contains a unit-distance
  representation of the graph. We prove that arbitrary positive
  semidefinite forms describing the ellipsoids yield NP-hard problems.
\end{abstract}

\maketitle

\section{Introduction}
Geometric representation of graphs is a beautiful area where
combinatorial optimization, graph theory and semidefinite optimization
meet and connect with many other research areas. In this paper, we
start by studying geometric representations of graphs where each node
is mapped to a point on a hypersphere so that each edge has unit
length and the radius of the hypersphere is minimum.
\Lovasz~\cite{LovaszSDP} proved that this graph invariant is related
to the \Lovasz{} theta number of the complement of the graph via a
simple but nonlinear equation. We show that this tight relationship
leads to min-max theorems and to a ``dictionary'' to translate
existing results about the theta function and its variants to the
hypersphere representation setting and vice versa.

Based on our approach, we derive a weighted version of the hypersphere
number of a graph and deduce related min-max theorems. Our viewpoint
allows us to make new connections, strengthen some facts and correct
some inaccuracies in the literature.

After observing that the hypersphere number of a graph is equal to the
radius of the smallest Euclidean ball containing a unit-distance
representation of the graph, we propose generalizations of the
underlying optimization problems. Given a graph, the generalized
optimization problem seeks the smallest ellipsoid of given shape which
contains a unit-distance representation of the graph. We finally show
that at this end of the new spectrum of unit-distance representations,
arbitrary positive semidefinite forms describing the shapes of the
ellipsoids yield NP-hard geometric representation problems.

\section{Preliminaries}
\label{sec:prelim}

We denote the set of symmetric $n \times n$ matrices by~$\Sym$, the
set of symmetric $n \times n$ positive semidefinite matrices
by~$\Psd$, and the set of symmetric $n \times n$ positive definite
matrices by~$\Pd$. For a finite set~$V$, the set of symmetric $V
\times V$ matrices is denoted by~$\Sym[V]$, and the symbols~$\Psd[V]$
and~$\Pd[V]$ are defined analogously. For $A,B \in \Sym$, we write $A
\succeq B$ meaning $(A-B) \in \Psd$. Define an inner product on~$\Sym$
by $\iprod{A}{B} \colonequals \trace(AB)$, where $\trace(X)
\colonequals \sum_{i=1}^n X_{ii}$ is the trace of $X \in
\Reals^{n\times n}$. The linear map $\diag \ffrom \Sym \fto \Reals^n$
extracts the diagonal of a matrix; its adjoint is denoted by $\Diag$.

The vector of all ones is denoted by~$\ones$. We abbreviate $[n]
\colonequals \set{1,\dotsc,n}$. The notation $\norm{\cdot}$ for a norm
is the Euclidean norm unless otherwise specified. For a finite set
$V$, the set of orthogonal $V \times V$ matrices is denoted
by~$\Orth[V]$. The set of nonnegative reals is denoted by~$\Reals_+$.
The set of positive reals is denoted by~$\Reals_{++}$. Define the
notations~$\Integers_+$ and~$\Integers_{++}$ analogously for integer
numbers.

For any function $f$ on graphs, we denote by $\overline{f}$ the
function defined by $\overline{f}(G) \colonequals f(\overline{G})$ for
every graph~$G$, where $\overline{G}$ denotes the complement of~$G$.
For a graph~$G$, we denote the clique number of~$G$ by~$\omega(G)$ and
the chromatic number of~$G$ by~$\chi(G)$. The complete graph on~$[n]$
is denoted by~$K_n$.

Let $G$ be a graph. Its vertex set is $V(G)$ and its edge set
is~$E(G)$. For $S \subseteq V(G)$, the subgraph of~$G$ induced by~$S$,
denoted by $G[S]$, is the subgraph of~$G$ on~$S$ whose edges are the
edges of~$G$ that have both ends in~$S$. For $i \in V(G)$, the
neighbourhood of~$i$, denoted by~$N(i)$, is the set of nodes of~$G$
adjacent to~$i$. A block of~$G$ is an inclusionwise maximal induced
subgraph of~$G$ with no cut-nodes, where a cut-node of a graph~$H$ is
a node $i \in V(H)$ such that $H[V(H) \drop \set{i}]$ has more
connected components than~$H$.

For a graph $G = (V,E)$, the \textdef{Laplacian of~$G$} is the linear
extension $\Lcal_G \ffrom \Reals^E \fto \Sym[V]$ of the map
$e_{\set{i,j}} \mapsto \oprodsym{(e_i-e_j)}$ for every $\set{i,j} \in
E$, where $e_i$ denotes the $i$th unit vector. Laplacians arise
naturally in spectral graph theory and spectral geometry; see
\cite{Chung97a}.

\section{Hypersphere representations and the \Lovasz{} theta function}
\label{sec:th}

Let $G = (V,E)$ be a graph. A \textdef{unit-distance representation
  of~$G$} is a function $u \ffrom V \fto \Reals^d$ for some $d \geq 1$
such that $\norm{u(i)-u(j)}=1$ whenever $\set{i,j} \in E$. A
\textdef{hypersphere representation of~$G$} is a unit-distance
representation of~$G$ that is contained in a hypersphere centered at
the origin, and the \textdef{hypersphere number of~$G$,} denoted
by~$t(G)$, is the square of the smallest radius of a hypersphere that
contains a unit-distance representation of~$G$. The \textdef{theta
  number of~$G$} is defined by
\begin{equation}
  \label{eq:theta-sdp}
  \vartheta(G)
  \colonequals
  \max\setst[\big]{
    \qform{X}{\ones}
  }{
    \trace(X) = 1,\,
    X_{ij} = 0\,\forall \set{i,j} \in E,\,
    X \in \Psd[V]
  }.
\end{equation}
This parameter was introduced by \Lovasz{} in the seminal
paper~\cite{Lovasz79a}; see also~\cite{GroetschelLS93a, Knuth94a} for
further properties and alternative definitions.

\Lovasz~\cite[p.~23]{LovaszSDP} noted the following formula
relating~$t$ and~$\vartheta$:

\begin{theorem}[\cite{LovaszSDP}]
  \label{thm:t-theta}
  For every graph $G$, we have
  \begin{equation}
    \label{eq:t-theta}
    2 t(G) + 1/\overline{\vartheta}(G) = 1.
  \end{equation}
\end{theorem}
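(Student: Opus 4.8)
The plan is to express both $t(G)$ and $1/\overline{\vartheta}(G)$ as optimal values of semidefinite programs over $\Sym[V]$ and to connect the two by an explicit affine change of variables. First I would rewrite the hypersphere number in terms of Gram matrices: for $s\geq0$, the hypersphere representations of~$G$ of squared radius~$s$ centred at the origin correspond precisely to the matrices $Y\in\Psd[V]$ with $\diag(Y)=s\ones$ and $Y_{ij}=s-\tfrac{1}{2}$ for all $\set{i,j}\in E(G)$ --- pass from a representation~$u$ to $Y$ with $Y_{ij}=\iprod{u(i)}{u(j)}$, pass back through a factorization $Y=\transp{U}U$, and use $\norm{u(i)-u(j)}^2=Y_{ii}-2Y_{ij}+Y_{jj}$. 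Hence
\[
  t(G)=\inf\setst[\big]{s}{Y\in\Psd[V],\ \diag(Y)=s\ones,\ Y_{ij}=s-\tfrac{1}{2}\,\forall\,\set{i,j}\in E(G)},
\]
an infimum over a nonempty set (e.g.\ $s=\tfrac{1}{2}$, $Y=\tfrac{1}{2}I$ is feasible).

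Next I would invoke the classical dual description of the theta number obtained from~\eqref{eq:theta-sdp} by conic duality (Slater's condition holds on the primal side, e.g.\ via a positive multiple of~$I$; see~\cite{GroetschelLS93a}),
\[
  \overline{\vartheta}(G)=\vartheta(\overline{G})=\inf\setst[\big]{\lambda_{\max}(B)}{B\in\Sym[V],\ B_{ii}=1\,\forall\,i\in V,\ B_{ij}=1\,\forall\,\set{i,j}\in E(G)},
\]
and then link the two infima. Given a feasible~$B$ in the second program, set $\mu:=\lambda_{\max}(B)$ (note $\mu\geq1>0$, as $\lambda_{\max}(B)$ dominates every diagonal entry of~$B$); the matrix $Y:=\tfrac{1}{2\mu}(\mu I-B)$ lies in $\Psd[V]$, has constant diagonal $\tfrac{1}{2}(1-1/\mu)$, and satisfies $Y_{ij}=-\tfrac{1}{2\mu}=\tfrac{1}{2}(1-1/\mu)-\tfrac{1}{2}$ on each edge; thus~$Y$ is feasible for the first program with value $\tfrac{1}{2}(1-1/\mu)$. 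Since $\mu\mapsto\tfrac{1}{2}(1-1/\mu)$ is continuous and increasing on $(0,\infty)$, taking infima gives $t(G)\leq\tfrac{1}{2}\paren{1-1/\overline{\vartheta}(G)}$, so in particular $t(G)<\tfrac{1}{2}$ because $\overline{\vartheta}(G)<\infty$ (the feasible region in~\eqref{eq:theta-sdp} is compact). Conversely, take a feasible~$Y$ of the first program with value $s<\tfrac{1}{2}$ and set $\mu:=(1-2s)^{-1}>0$ and $B:=\mu(I-2Y)$; then $B_{ii}=\mu(1-2s)=1$, $B_{ij}=-2\mu(s-\tfrac{1}{2})=\mu(1-2s)=1$ on each edge, and $\lambda_{\max}(B)=\mu\paren{1-2\lambda_{\min}(Y)}\leq\mu$ because $Y\succeq0$; hence $\overline{\vartheta}(G)\leq\mu=(1-2s)^{-1}$. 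As $s\mapsto(1-2s)^{-1}$ is continuous and increasing on $(-\infty,\tfrac{1}{2})$ and $t(G)<\tfrac{1}{2}$, taking infima over such~$s$ yields $\overline{\vartheta}(G)\leq(1-2t(G))^{-1}$, i.e.\ $2t(G)+1/\overline{\vartheta}(G)\geq1$. The two inequalities together give~\eqref{eq:t-theta}.

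The only genuine content beyond bookkeeping is the dual eigenvalue description of $\vartheta(\overline{G})$; I therefore expect the main obstacle to be a careful treatment of the boundary regime --- ensuring $\mu>0$ and $s<\tfrac{1}{2}$ throughout, so that every division is legitimate, and separately recording the degenerate edgeless case, where $t(G)=0$ and $\overline{\vartheta}(G)=1$. Everything else reduces to the identity $\norm{u(i)-u(j)}^2=Y_{ii}-2Y_{ij}+Y_{jj}$ and to the affine substitution $B\leftrightarrow\mu(I-2Y)$, under which the two objective values correspond via $2s+1/\mu=1$.
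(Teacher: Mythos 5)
Your argument is correct, but it routes the duality through the opposite side from the paper. The paper's proof keeps nothing but the SDP formulation \eqref{eq:t} of $t(G)$, forms its explicit dual \eqref{opt:t-dual}, checks Slater points on both sides, and then the substitution $Y=\Diag(y)-\Lcal_G(z)$ turns that dual into a rescaled copy of the \emph{primal} theta SDP \eqref{eq:theta-sdp} for $\overline{G}$ (the minimum-trace program $\hat{t}$ with $\hat{t}(G)\,\overline{\vartheta}(G)=1$). You instead keep the hypersphere program primal (your Gram-matrix formulation is equivalent to \eqref{eq:t}) and match it against the \emph{dual} of the theta SDP, namely the classical eigenvalue characterization $\overline{\vartheta}(G)=\min\setst{\lambda_{\max}(B)}{B_{ii}=1,\, B_{ij}=1\ \forall \set{i,j}\in E(G)}$, via the affine correspondences $Y=\tfrac{1}{2\mu}(\mu I-B)$ and $B=\mu(I-2Y)$; your checks ($\mu\geq 1$, $t(G)<\tfrac{1}{2}$, $\lambda_{\max}(B)\leq\mu$ when $Y\succeq 0$) are all in order, and working with infima lets you dodge attainment questions altogether. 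So each proof spends exactly one strong-duality step, just in different places: the paper pays it on the $t$ side and is repaid with the explicit dual \eqref{opt:t-dual}, which it reuses later (Proposition~\ref{prop:t-contraction}, the $t_W$ discussion, and as the template for the weighted parameter $t(G,w)$); you pay it on the $\vartheta$ side, where it is a standard citable fact, and your feasible-solution maps are precisely the scaling between hypersphere representations and strict vector colourings that the paper only records in Section~\ref{sec:hom} ($t(G)=\tfrac{1}{2}(1-1/\chi_v(G))$ with $\chi_v$ the strict vector chromatic number, which equals $\overline{\vartheta}$). Either way the bookkeeping reduces to the identity $2s+1/\mu=1$, as you note.
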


We will show how the relation~\eqref{eq:t-theta} can be used to better
understand some of the properties of the theta number and the
hypersphere number. This will allow us to obtain simpler proofs of
some facts about the theta number and new results about hypersphere
representations.

\subsection{Proof of Theorem~\ref{thm:t-theta}}

We include a proof of Theorem~\ref{thm:t-theta} for the sake of
completeness. We may formulate~$t(G)$ as the SDP
\begin{equation}
  \label{eq:t}
  t(G)
  =
  \min\setst[\big]{
    t
  }{
    \diag(X) = t\ones,\,
    \Lcal_G^*(X) = \ones,\,
    X \in \Psd[V],\, t \in \Reals
  }.
\end{equation}
Here, $\Lcal_G^*$ is the adjoint of the Laplacian~$\Lcal_G$ of~$G$.
The dual of~\eqref{eq:t} is
\begin{equation}
  \label{opt:t-dual}
  \max\setst[\big]{
    \iprodt{\ones}{z}
  }{
    \Diag(y) \succeq \Laplacian{z},\,
    \iprodt{\ones}{y} = 1,\,
    y \in \Reals^V,\,
    z \in \Reals^E
  }.
\end{equation}

Both~\eqref{eq:t} and~\eqref{opt:t-dual} have Slater points, so SDP
strong duality holds for this dual pair of SDPs, i.e., their optimal
values coincide and both optima are attained. In particular, $t(G)$ is
equal to~\eqref{opt:t-dual}. If we write an optimal solution $X^*$
of~\eqref{eq:t} as $X^* = \oprodsym{U}$, then $i \mapsto U^T e_i$ is a
hypersphere representation of~$G$ with squared radius~$t(G)$.
\begin{proof}[Proof of Theorem~\ref{thm:t-theta}]
  We can rewrite the dual~\eqref{opt:t-dual} as
  \begin{equation*}
    t(G)
    =
    \max
    \setst[\big]{
      \myhalf
      \iprod{
        \oprodsym{\ones} - I
      }{
        Y
      }
    }{
      \qform{Y}{\ones} = 1,\,
      Y_{ij} = 0 \text{ } \forall \set{i,j} \in \overline{E}(G),\,
      Y \in \Psd[V]
    }
  \end{equation*}
  by taking $Y \colonequals \Diag(y) - \Lcal_G(z)$. The objective
  value of a feasible solution~$Y$ is $\frac{1}{2}
  \iprod{\oprodsym{\ones}-I}{Y} = \frac{1}{2}\paren{1 - \trace(Y)}$.
  Thus, $t(G) = \myhalf\paren{1-\hat{t}(G)}$, where
  \begin{equation*}
    \hat{t}(G)
    \colonequals
    \min \setst[\big]{
      \trace(Y)
    }{
      \qform{Y}{\ones} = 1,\,
      Y_{ij} = 0 \text{ } \forall \set{i,j} \in \overline{E}(G),\,
      Y \in \Psd[V]
    }.
  \end{equation*}
  It is easy to check that $\hat{t}(G) \overline{\vartheta}(G) = 1$.
\end{proof}

\subsection{Hypersphere and orthonormal representations of graphs}

Let $G = (V,E)$ be a graph. An \textdef{orthonormal representation
  of~$G$} is a function from~$V$ to the unit hypersphere in~$\Reals^d$
for some~$d \geq 1$ that maps non-adjacent nodes to orthogonal
vectors. It is well-known that, if $u \ffrom V \fto \Reals^d$ is a
hypersphere representation of~$G$ with squared radius $t \leq 1/2$,
then the map
\begin{equation}
  \label{eq:ortho-from-sphere}
  q \ffrom i \mapsto
  \sqrt{2}
  \sqbrac[\big]{
    \sqrt{1/2-t}
    \oplus
    u(i)
  }
  \in \Reals \oplus \Reals^d
\end{equation}
is an orthonormal representation of~$\overline{G}$.
Define~$\thetabody(G)$ as the set of all $x \in \Reals_+^V$ such that
$\sum_{i \in V} \paren{\iprodt{c}{p(i)}}^2 x_i \leq 1$ for every
orthonormal representation $p \ffrom V \fto \Reals^d$ of~$G$ and unit
vector $c \in \Reals^d$. Then $\vartheta(G) =
\max\setst{\iprodt{\ones}{x}}{x \in \thetabody(G)}$.

The transformation~\eqref{eq:ortho-from-sphere} allows us to interpret
Theorem~\ref{thm:t-theta} as strong duality for a nonlinear min-max
relation:
\begin{proposition}
  \label{prop:t-minmax}
  Let $G$ be a graph. For every hypersphere representation of~$G$ with
  squared radius~$t$ and every nonzero $x \in
  \thetabody(\overline{G})$, we have \[ 2t + 1/(\iprodt{\ones}{x})
  \geq 1, \] with equality if and only if $t = t(G)$ and
  $\iprodt{\ones}{x} = \overline{\vartheta}(G)$.
\end{proposition}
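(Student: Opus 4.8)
The plan is to deduce Proposition~\ref{prop:t-minmax} from Theorem~\ref{thm:t-theta} together with the fact, recalled just before the statement, that $\overline{\vartheta}(G) = \vartheta(\overline{G}) = \max\setst{\iprodt{\ones}{x}}{x \in \thetabody(\overline{G})}$. The inequality part should follow by showing that for \emph{any} hypersphere representation of~$G$ with squared radius~$t$ one has $t \geq t(G)$ (immediate from the definition of $t(G)$ as an infimum), and for \emph{any} nonzero $x \in \thetabody(\overline{G})$ one has $\iprodt{\ones}{x} \leq \overline{\vartheta}(G)$ (immediate from the variational description of $\overline{\vartheta}$). Then, since $s \mapsto 2s$ is increasing and $s \mapsto 1/s$ is decreasing on $\Reals_{++}$, we get
\[
  2t + \frac{1}{\iprodt{\ones}{x}} \;\geq\; 2t(G) + \frac{1}{\overline{\vartheta}(G)} \;=\; 1,
\]
where the last equality is exactly \eqref{eq:t-theta}. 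For this chain to make sense I must first check that $\overline{\vartheta}(G) > 0$ and $\iprodt{\ones}{x} > 0$; the former holds because $\vartheta$ of any graph is at least~$1$ (take a single coordinate vector), and the latter because $x \in \Reals_+^V$ is nonzero, so $\iprodt{\ones}{x} = \sum_i x_i > 0$.

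For the equality characterization, the ``if'' direction is immediate: if $t = t(G)$ and $\iprodt{\ones}{x} = \overline{\vartheta}(G)$, then the displayed chain collapses to equalities throughout and \eqref{eq:t-theta} gives $2t + 1/\iprodt{\ones}{x} = 1$. For the ``only if'' direction, suppose $2t + 1/\iprodt{\ones}{x} = 1$. Combining the two monotone bounds above with \eqref{eq:t-theta}, we have
\[
  1 \;=\; 2t + \frac{1}{\iprodt{\ones}{x}} \;\geq\; 2t(G) + \frac{1}{\iprodt{\ones}{x}} \;\geq\; 2t(G) + \frac{1}{\overline{\vartheta}(G)} \;=\; 1,
\]
so both inequalities are tight: $t = t(G)$ (using $t \geq t(G)$ and that $2t = 2t(G)$) and $1/\iprodt{\ones}{x} = 1/\overline{\vartheta}(G)$, hence $\iprodt{\ones}{x} = \overline{\vartheta}(G)$.

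The only mild subtlety—and the step I would expect to need the most care—is making sure the two one-sided bounds are genuinely valid for \emph{every} hypersphere representation and \emph{every} nonzero element of $\thetabody(\overline{G})$, not merely for optimal ones. The bound $t \geq t(G)$ is true by definition because $t(G)$ is the infimum of squared radii over all hypersphere representations (and this infimum is attained, as noted after \eqref{opt:t-dual}, though attainment is not needed here). The bound $\iprodt{\ones}{x} \leq \overline{\vartheta}(G)$ is true because $\thetabody(\overline{G})$ is precisely the feasible region of the linear program whose optimal value defines $\overline{\vartheta}(G)$. Once these two facts are in hand, the proposition is a purely arithmetic consequence of \eqref{eq:t-theta} and the monotonicity of the maps $s \mapsto 2s$ and $s \mapsto 1/s$, so no further work is required.
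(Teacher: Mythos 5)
Your proof is correct, but it takes a different route from the paper's. You deduce everything formally from Theorem~\ref{thm:t-theta}: you bound $t \geq t(G)$ by the definition of the hypersphere number, bound $\iprodt{\ones}{x} \leq \overline{\vartheta}(G)$ by the characterization $\vartheta(\overline{G}) = \max\setst{\iprodt{\ones}{x}}{x \in \thetabody(\overline{G})}$, and then both the inequality and the equality characterization follow by monotonicity from $2t(G)+1/\overline{\vartheta}(G)=1$; your positivity checks ($\iprodt{\ones}{x}>0$ since $x \in \Reals_+^V$ is nonzero, and $\overline{\vartheta}(G) \geq 1$) are exactly the ones needed. The paper instead proves the inequality \emph{directly}: from a hypersphere representation $u$ with squared radius $t<1/2$ it builds the orthonormal representation $q$ of $\overline{G}$ via the lifting~\eqref{eq:ortho-from-sphere}, takes $c = 1 \oplus 0$, and applies the defining inequality of $\thetabody(\overline{G})$ to get $(1-2t)\,\iprodt{\ones}{x} \leq 1$, invoking Theorem~\ref{thm:t-theta} only for the equality case. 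The trade-off: your argument is shorter and purely arithmetic, but it routes weak duality through the already-established strong duality, so it does not exhibit the geometric pairing between an arbitrary hypersphere representation and an arbitrary element of $\thetabody(\overline{G})$ that the proposition is meant to showcase (and that feeds directly into the SDP-free Corollary~\ref{prop:t-minmax-sdpfree} and the weighted generalization in Theorem~\ref{thm:thw-minmax}); the paper's construction certifies the inequality pointwise without reference to the optimal values, which is the conceptual content of calling these objects ``natural duals.''
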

\begin{proof}
  Set $V \colonequals V(G)$. Let $u \ffrom V \fto \Reals^d$ be a
  hypersphere representation of~$G$ with squared radius~$t$. We may
  assume that $t < 1/2$. Let $x \in \thetabody(\overline{G})$. Define
  an orthonormal representation~$q$ of~$\overline{G}$ from~$p$ as
  in~\eqref{eq:ortho-from-sphere}. Set $c \colonequals 1 \oplus 0 \in
  \Reals \oplus \Reals^d$. Then $(1-2t) \iprodt{\ones}{x} = \sum_{i
    \in V} \paren{\iprodt{c}{q(i)}}^2 x_i \leq 1$.

  The equality case now follows from Theorem~\ref{thm:t-theta}.
\end{proof}

Proposition~\ref{prop:t-minmax} shows that $\overline{\vartheta}(G)$
and elements from~$\thetabody(\overline{G})$ are natural dual objects
for~$t(G)$ and hypersphere representations of~$G$. In fact, using a
well-known description of the elements of $\thetabody(\overline{G})$,
we recover from Proposition~\ref{prop:t-minmax} the following SDP-free
purely geometric min-max relation:
\begin{corollary}
  \label{prop:t-minmax-sdpfree}
  Let $G = (V,E)$ be a graph. For every hypersphere representation
  of~$G$ with squared radius~$t$, every orthonormal representation $p
  \ffrom V \fto \Reals^d$ of~$G$, and every unit vector $c \in
  \Reals^d$ such that $c \not\in p(V)^{\perp}$, we have \[ 2t +
  \sqbrac[\big]{\textstyle\sum_{i \in
      V} \paren{\iprodt{c}{p(i)}}^2}^{-1} \geq 1, \] with equality if
  and only if $t = t(G)$ and $\sum_{i \in
    V} \paren{\iprodt{c}{p(i)}}^2 = \overline{\vartheta}(G)$.
\end{corollary}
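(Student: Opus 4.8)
The strategy is to deduce Corollary~\ref{prop:t-minmax-sdpfree} from Proposition~\ref{prop:t-minmax} by exhibiting, for each orthonormal representation $p$ of $G$ and unit vector $c \in \Reals^d$ with $c \notin p(V)^\perp$, a suitable nonzero element of $\thetabody(\overline{G})$ whose objective value $\iprodt{\ones}{x}$ equals $\sum_{i\in V}(\iprodt{c}{p(i)})^2$. Recall the standard fact that $\thetabody(\overline{G})$ contains every vector of the form $x$ with $x_i = (\iprodt{c}{p(i)})^2$ arising from an orthonormal representation $p$ of $\overline{\overline{G}} = G$... wait --- more precisely, the well-known description says: if $p$ is an orthonormal representation of $G$ and $c$ is a unit vector, then the vector $x$ defined by $x_i \colonequals (\iprodt{c}{p(i)})^2$ lies in $\thetabody(\overline{G})$ (this is exactly the defining inequality of $\thetabody(\overline{G})$, since orthonormal representations of $G$ map adjacent-in-$\overline{G}$, i.e.\ non-adjacent-in-$G$, nodes to orthogonal vectors). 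The hypothesis $c \notin p(V)^\perp$ guarantees $x \neq 0$, so Proposition~\ref{prop:t-minmax} applies directly with this $x$, giving $2t + 1/(\iprodt{\ones}{x}) \geq 1$ with $\iprodt{\ones}{x} = \sum_{i \in V}(\iprodt{c}{p(i)})^2$, which is exactly the claimed inequality.

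\medskip

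First I would recall precisely the description of $\thetabody(G)$ alluded to in the paragraph preceding the corollary, namely that $x \in \thetabody(G)$ if and only if the inequality $\sum_{i} (\iprodt{c}{p(i)})^2 x_i \leq 1$ holds for all orthonormal representations $p$ of $G$ and all unit vectors $c$; then observe that by weak LP-type duality in this setup, the ``extreme'' members realizing a given functional are exactly the vectors $x$ with $x_i = (\iprodt{c}{p(i)})^2$ for $p$ an orthonormal representation of the \emph{complement}. Concretely: I claim that for any orthonormal representation $p$ of $G$ and unit vector $c$, the vector $\hat x$ with $\hat x_i = (\iprodt{c}{p(i)})^2$ belongs to $\thetabody(\overline{G})$. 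This is immediate from the definition applied to $\overline{G}$: we must check $\sum_i (\iprodt{c'}{p'(i)})^2 \hat x_i \le 1$ for every orthonormal representation $p'$ of $\overline{G}$ and unit $c'$; this is precisely one of the standard facts about $\thetabody$ (it is essentially the statement that orthonormal representations of $G$ and of $\overline{G}$ together give valid inequalities), and it is the content of the ``well-known description of the elements of $\thetabody(\overline{G})$'' the paper cites.

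\medskip

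Then I would simply substitute. Take $x \colonequals \hat x$ in Proposition~\ref{prop:t-minmax}. Since $c \notin p(V)^\perp$, there is some $i$ with $\iprodt{c}{p(i)} \neq 0$, so $\hat x \neq 0$, and the proposition yields $2t + 1/(\iprodt{\ones}{\hat x}) \geq 1$ with equality if and only if $t = t(G)$ and $\iprodt{\ones}{\hat x} = \overline{\vartheta}(G)$. Since $\iprodt{\ones}{\hat x} = \sum_{i \in V} (\iprodt{c}{p(i)})^2$, this is exactly the assertion of the corollary, including the equality characterization. That the relation is now ``SDP-free'' is just the observation that orthonormal representations and unit vectors are elementary geometric data, with no semidefinite program in sight.

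\medskip

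\textbf{Main obstacle.} The only real point requiring care is pinning down the precise form of the ``well-known description'' of $\thetabody(\overline{G})$ and verifying that the vector $\hat x$ above genuinely lies in it --- i.e.\ that it is a valid member rather than merely a candidate. Depending on which description one invokes, one may need to note that every orthonormal representation of $G$ together with a unit vector $c$ produces, via $i \mapsto (\iprodt{c}{p(i)})^2$, a point of $\thetabody(\overline{G})$; this is standard (see \cite{GroetschelLS93a, Knuth94a, Lovasz79a}) but should be stated explicitly. Everything else is a one-line substitution into Proposition~\ref{prop:t-minmax}, and the equality case is inherited verbatim.
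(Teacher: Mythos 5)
Your proposal is correct and follows essentially the same route the paper intends: feed the vector $\hat x_i = (\iprodt{c}{p(i)})^2$ into Proposition~\ref{prop:t-minmax}, using the well-known fact that such vectors (coming from an orthonormal representation of~$G$ and a unit vector~$c$) lie in $\thetabody(\overline{G})$, with $c \not\in p(V)^{\perp}$ ensuring $\hat x \neq 0$ and the equality case transferring verbatim. Note only that you need just this easy membership direction (a tensor-product/Bessel argument), not the full antiblocking description of $\thetabody(\overline{G})$ that your ``extreme members'' remark gestures at.
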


\subsection{A Gallai-type identity}

The transformation~\eqref{eq:ortho-from-sphere} may be reversed as
follows. Suppose that $q \ffrom V \fto \Reals^d$ is an orthonormal
representation of~$\overline{G}$ such that, for some positive $\mu \in
\Reals$ and some $u \ffrom V \fto \Reals^{d-1}$, we have
\begin{equation}
  \label{eq:sphere-from-ortho}
  q(i) = \sqrt{2}
  \sqbrac[\big]{
    (2\mu)^{-1/2}
    \oplus
    u(i)
  }
  \qquad \forall i \in V.
\end{equation}
Then $u$ is a hypersphere representation of~$G$ with squared radius
$\myhalf(1-1/\mu)$. We can use~\eqref{eq:ortho-from-sphere}
and~\eqref{eq:sphere-from-ortho} to obtain an identity involving these
objects.
\begin{proposition}
  \label{prop:th-gallai}
  Let $G = (V,E)$ be a graph. Then
  \begin{equation}
    \label{eq:th-gallai}
    2t(G) + \max_{p,c} \min_{i \in V}
    \paren[\big]{\iprodt{c}{p(i)}}^2 = 1,
  \end{equation}
  where $p$ ranges over all orthonormal representations
  of~$\overline{G}$ and $c$ over unit vectors of the appropriate
  dimension.
\end{proposition}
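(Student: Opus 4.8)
The plan is to show directly that $\max_{p,c}\min_{i\in V}(\iprodt{c}{p(i)})^2=1-2t(G)$, which is equivalent to~\eqref{eq:th-gallai}; here $p$ ranges over orthonormal representations of~$\overline{G}$ and $c$ over unit vectors of matching dimension. We may assume $G$ has at least one vertex. By Theorem~\ref{thm:t-theta} we have $1-2t(G)=1/\overline{\vartheta}(G)$, and since $\overline{\vartheta}(G)=\vartheta(\overline{G})$ is finite and at least~$1$ we also record $t(G)<1/2$. I would prove the inequalities ``$\geq$'' and ``$\leq$'' separately.

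For ``$\geq$'' I would exhibit a single pair $(q,c)$ attaining the value $1-2t(G)$. By the discussion following~\eqref{opt:t-dual} there is a hypersphere representation $u\ffrom V\fto\Reals^d$ of~$G$ with squared radius $t\colonequals t(G)$; since $t<1/2$, the transformation~\eqref{eq:ortho-from-sphere} applied to~$u$ produces an orthonormal representation $q\ffrom V\fto\Reals\oplus\Reals^d$ of~$\overline{G}$, and taking $c\colonequals 1\oplus 0$ one computes $\iprodt{c}{q(i)}=\sqrt{2}\,\sqrt{1/2-t}$, so $(\iprodt{c}{q(i)})^2=2(1/2-t)=1-2t$ for \emph{every} $i\in V$. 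Hence $\min_{i\in V}(\iprodt{c}{q(i)})^2=1-2t(G)$, so the maximum is attained and is at least $1-2t(G)$.

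For ``$\leq$'' I would take an arbitrary orthonormal representation $p\ffrom V\fto\Reals^d$ of~$\overline{G}$ and unit vector $c\in\Reals^d$, set $\beta\colonequals\min_{i\in V}(\iprodt{c}{p(i)})^2$, and show $\beta\leq 1-2t(G)$. To that end, fix $x^{*}$ attaining $\overline{\vartheta}(G)=\max\setst{\iprodt{\ones}{x}}{x\in\thetabody(\overline{G})}$. Since $x^{*}\in\thetabody(\overline{G})$, its defining inequality applied to~$p$ and~$c$ gives $\sum_{i\in V}(\iprodt{c}{p(i)})^2 x^{*}_i\leq 1$; because $x^{*}\in\Reals_+^V$ and $(\iprodt{c}{p(i)})^2\geq\beta$ for all~$i$, this forces $\beta\,\iprodt{\ones}{x^{*}}\leq 1$, hence $\beta\leq 1/\overline{\vartheta}(G)=1-2t(G)$. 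As $(p,c)$ was arbitrary, the maximum is at most $1-2t(G)$; together with the ``$\geq$'' step this establishes~\eqref{eq:th-gallai}.

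The routine parts are the elementary computation in the ``$\geq$'' step and the bound $1\leq\overline{\vartheta}(G)<\infty$. I expect the main obstacle to be the ``$\leq$'' step: it is not obvious how to bound a \emph{minimum} of the numbers $(\iprodt{c}{p(i)})^2$ above by $1/\overline{\vartheta}(G)$, and the key move is to test the defining inequality of~$\thetabody(\overline{G})$ against a $\vartheta$-optimal weight vector~$x^{*}$ and then use $x^{*}\geq 0$ to replace the weighted sum by $\beta\,\iprodt{\ones}{x^{*}}$. In effect this reproves one direction of the well-known minimax formula for~$\vartheta$ in terms of orthonormal representations, using $\max_i(\iprodt{c}{p(i)})^{-2}=(\min_i(\iprodt{c}{p(i)})^2)^{-1}$; alternatively one could cite that formula and combine it with Theorem~\ref{thm:t-theta}. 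I would also note that the reverse transformation~\eqref{eq:sphere-from-ortho} by itself does not deliver ``$\leq$'', since it applies only when~$p$ has a coordinate constant along~$c$, which a general orthonormal representation of~$\overline{G}$ lacks.
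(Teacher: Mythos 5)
Your proof is correct, but the two halves are not handled the way the paper handles them. The ``$\geq$'' step (exhibiting the pair $(q,c)$ via \eqref{eq:ortho-from-sphere} with $c=1\oplus 0$ applied to an optimal hypersphere representation) coincides with the paper's argument. For the ``$\leq$'' step, however, the paper stays geometric: it invokes the well-known fact that any pair $(p,c)$ can be replaced by a \emph{balanced} orthonormal representation $q$ of~$\overline{G}$ and unit vector~$d$ with $(\iprodt{d}{q(j)})^2=\beta\colonequals\min_{i}(\iprodt{c}{p(i)})^2$ for all~$j$, and then applies the reverse transformation \eqref{eq:sphere-from-ortho} with $\mu=1/\beta$ to manufacture an explicit hypersphere representation of~$G$ with squared radius $\tfrac12(1-\beta)$, so that $t(G)\leq\tfrac12(1-\beta)$ without any appeal to Theorem~\ref{thm:t-theta} in that direction. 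You instead test the defining inequality of $\thetabody(\overline{G})$ against a $\vartheta$-optimal weight vector $x^{*}$, obtain $\beta\,\iprodt{\ones}{x^{*}}\leq 1$, and convert $1/\overline{\vartheta}(G)$ into $1-2t(G)$ via Theorem~\ref{thm:t-theta}; this is legitimate given the facts the paper states (the $\thetabody$ max-formula for~$\vartheta$, with the max attained since $\thetabody(\overline G)$ is compact, and $1\leq\overline{\vartheta}(G)<\infty$), and it is essentially the dual-side argument of Proposition~\ref{prop:t-minmax} run with optimality rather than mere feasibility. What each route buys: yours is shorter and sidesteps the balancing fact (whose proof is nontrivial), at the cost of leaning on the SDP relation \eqref{eq:t-theta} and on the $\thetabody$ characterization, so in effect you reprove one inequality of \Lovasz's original orthonormal-representation formula for $\overline{\vartheta}$; the paper's route is constructive and keeps the ``$\leq$'' direction independent of Theorem~\ref{thm:t-theta}, which is more in the spirit of an identity between geometric representations. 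Your closing observation that \eqref{eq:sphere-from-ortho} alone cannot give ``$\leq$'' for an unbalanced $(p,c)$ is exactly the point the paper's balancing step is there to address.
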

\begin{proof}
  We first prove~``$\leq$'' in~\eqref{eq:th-gallai}. Let $p \ffrom V
  \fto \Reals^d$ be an orthonormal representation of~$\overline{G}$
  and let $c \in \Reals^d$ be a unit vector. We will show that
  \begin{equation}
    \label{eq:th-gallai-leq}
    t(G) \leq \myhalf \paren[\big]{1- \min_{i \in V}
      \paren[\big]{\iprodt{c}{p(i)}}^2}.
  \end{equation}
  It is well-known that there exists an orthonormal representation $q$
  of~$\overline{G}$ and a unit vector~$d$ such that
  $\paren{\iprodt{d}{q(j)}}^2 = \beta \colonequals \min_{i \in
    V} \paren{\iprodt{c}{p(i)}}^2$ for all $j \in V$. If $\beta = 0$,
  then $i \mapsto 2^{-1/2} e_i \in \Reals^V$ shows that $t(G) \leq
  1/2$, so assume that $\beta > 0$. We may assume that $d = e_1$ and
  $\iprodt{d}{q(i)} \geq 0$ for every $i \in V$. Now
  use~\eqref{eq:sphere-from-ortho} with $\mu = 1/\beta$ to get a
  hypersphere representation~$u$ of~$G$ from~$q$ with squared radius
  $\frac{1}{2}(1-\beta)$. This proves~\eqref{eq:th-gallai-leq}.

  Next we prove ``$\geq$'' in~\eqref{eq:th-gallai}. Let $u \ffrom V
  \fto \Reals^d$ be a hypersphere representation of~$G$ with squared
  radius $t(G)$. Build an orthonormal representation~$q$
  of~$\overline{G}$ as in~\eqref{eq:ortho-from-sphere} and pick $c
  \colonequals 1 \oplus 0 \in \Reals \oplus \Reals^d$. Then
  $\paren{\iprodt{c}{q(i)}}^2 = 1-2t(G)$ for every $i \in V$.
\end{proof}

\noindent
(The reciprocal of the second term of the sum on the LHS
of~\eqref{eq:th-gallai} was used as the original definition
of~$\overline{\vartheta}(G)$ by \Lovasz~\cite{Lovasz79a}.)

Note that~\eqref{eq:th-gallai} does not provide a good
characterization of either $t(G)$ or the maximization problem on the
LHS of~\eqref{eq:th-gallai}. In this sense,
Proposition~\ref{prop:th-gallai} is akin to \Gallai's identities for
graphs~\cite[Lemmas~1.0.1 and~1.0.2]{LovaszP86a}.

\subsection{Unit-distance representations in hyperspheres and balls}
\label{sec:balls}

For a graph $G$, let $t_b(G)$ be the square of the smallest radius of
an Euclidean ball that contains a unit-distance representation of~$G$.
This parameter is also mentioned by
\Lovasz~\cite[Proposition~4.1]{LovaszSDP}.

To formulate~$t_b(G)$ as an SDP, replace the constraint~$\diag(X) =
t\ones$ in~\eqref{eq:t} by $\diag(X) \leq t\ones$. The resulting SDP
and its dual have Slater points, so SDP strong duality holds, i.e.,
both optima are attained and the optimal values coincide.

Evidently, $t_b(G) \leq t(G)$ for every graph~$G$. In fact, equality
holds:
\begin{theorem}
  \label{thm:tb-th}
  For every graph~$G$, we have $t_b(G) = t(G)$.
\end{theorem}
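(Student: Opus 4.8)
The plan is to show $t_b(G) \geq t(G)$, since the reverse inequality is already observed. The key idea is that any unit-distance representation contained in a Euclidean ball of squared radius $t_b(G)$ can be ``symmetrized'' into a hypersphere representation (centered at the origin) without increasing the squared radius. Concretely, suppose $v \ffrom V \fto \Reals^d$ is a unit-distance representation with $\norm{v(i) - p} \leq r$ for all $i \in V$, where $p \in \Reals^d$ is the center of the ball and $r^2 = t_b(G)$. We want to produce a hypersphere representation with squared radius at most $r^2$.

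**First I would** handle the representation at the SDP/Gram-matrix level, which is cleaner than working with explicit points. Writing $X \colonequals \oprodsym{U}$ for the Gram matrix of an optimal solution to the ball-version SDP (the one with $\diag(X) \leq t\ones$), we have a feasible $X \in \Psd[V]$ with $\diag(X) \leq t_b(G)\ones$ and $\Lcal_G^*(X) = \ones$. The goal is to build $X' \in \Psd[V]$ with $\diag(X') = t'\ones$, $\Lcal_G^*(X') = \ones$, and $t' \leq t_b(G)$. A natural device is to average over translations: translating the representation by a vector $w$ replaces the Gram matrix $X$ by $X + \ones w^T U + U^T w \ones^T + \norm{w}^2 \oprodsym{\ones}$ — wait, more carefully, translating $u(i) \mapsto u(i) + w$ changes $X_{ij} = \iprod{u(i)}{u(j)}$ to $X_{ij} + \iprod{w}{u(i)} + \iprod{w}{u(j)} + \norm{w}^2$, and crucially it does not change the differences $u(i)-u(j)$, hence preserves $\Lcal_G^*(X) = \ones$. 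So I would look for a translation (equivalently, adding a rank-one-ish correction $a\ones^T + \ones a^T + \lambda \oprodsym{\ones}$ to $X$) that makes the diagonal constant while keeping positive semidefiniteness and not increasing the maximum diagonal entry.

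**The cleaner route** is probably this: given the ball representation with center $p$, the function $i \mapsto v(i) - p$ has all points in the ball of radius $r$ about the origin, i.e.\ $\norm{v(i)-p}^2 \leq r^2$ for all $i$, and it is still a unit-distance representation. Now append one extra coordinate: define $u(i) \colonequals (v(i)-p) \oplus \sqrt{r^2 - \norm{v(i)-p}^2} \in \Reals^{d+1}$. Then $\norm{u(i)}^2 = r^2$ for every $i$, so $u$ lies exactly on the hypersphere of squared radius $r^2$ centered at the origin; and since the last coordinate depends only on $\norm{v(i)-p}$, adjacent vertices — which satisfy $\norm{v(i)-p}$ possibly differing — hmm, this does \emph{not} obviously preserve unit distances, because the new last coordinates of $u(i)$ and $u(j)$ need not be equal. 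So I would instead lift using a \emph{fixed} constant: the point is that in the ball SDP an optimal $X$ has $\diag(X) \leq t_b(G)\ones$, and we set $X' \colonequals X + \Diag(t_b(G)\ones - \diag(X))$. This adds a nonnegative diagonal matrix, so $X' \succeq X \succeq 0$; it makes $\diag(X') = t_b(G)\ones$ exactly; and adding a diagonal matrix $\Diag(s)$ changes $\Lcal_G^*(X')$ by $\Lcal_G^*(\Diag(s))$, whose value on edge $\set{i,j}$ is $s_i + s_j \geq 0$, \textbf{not} zero in general — so this breaks the edge constraint.

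**The main obstacle**, then, is reconciling ``make the diagonal constant'' with ``keep $\Lcal_G^*(X)=\ones$'', and the honest fix is the explicit lift after translating the center to the origin: take the ball representation $v$ with center $0$ and $\norm{v(i)}^2 \leq r^2$, and define $u(i) \colonequals v(i) \oplus c_i$ with $c_i \colonequals \sqrt{r^2 - \norm{v(i)}^2}$ only if we can also argue the edge lengths are preserved. They are \emph{not} automatically, so the correct argument must use the SDP duality structure instead: I would take the dual of the ball-SDP, observe (as the excerpt already promises) that strong duality holds with attainment, and show the dual optimal value equals the dual optimal value of \eqref{opt:t-dual}. The ball-primal has $\diag(X) \leq t\ones$ in place of $\diag(X) = t\ones$; dualizing, the multiplier $y$ for this inequality becomes sign-constrained as $y \geq 0$ (rather than free), so the dual of the ball-SDP is \eqref{opt:t-dual} with the extra constraint $y \in \Reals_+^V$. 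Thus it suffices to prove that \eqref{opt:t-dual} always has an optimal solution with $y \geq 0$. Given any optimal $(y,z)$ of \eqref{opt:t-dual} with $\Diag(y) \succeq \Lcal_G(z)$, the diagonal of $\Lcal_G(z)$ on vertex $i$ is $\sum_{j : \set{i,j}\in E} z_{ij}$; positive semidefiniteness forces $y_i \geq (\Lcal_G(z))_{ii}$, but that can be negative. So instead I would argue directly: replace $(y,z)$ by shifting — add a large constant $\lambda$ to every $y_i$ and simultaneously add $\lambda$ to appropriate $z$-entries along a spanning structure — to make $y$ nonnegative while keeping $\Diag(y) - \Lcal_G(z)$ unchanged on the off-diagonal and PSD, and keeping $\ones^T y = 1$ by rescaling; the tricky bookkeeping is maintaining $\ones^T y = 1$ and $\ones^T z$ (the objective) simultaneously. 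I expect the cleanest writeup uses the reformulation from the proof of Theorem~\ref{thm:t-theta}: both $t(G)$ and $t_b(G)$ reduce to optimizing $\trace(Y)$ over $Y \in \Psd[V]$ with $\qform{Y}{\ones}=1$ and $Y_{ij}=0$ for $\set{i,j}\in\overline{E}$ — for $t_b(G)$ one gets the \emph{same} feasible set but possibly with $Y = \Diag(y) - \Lcal_G(z)$ only required to satisfy $\diag Y \leq$ (something) — and the upshot is that the extra sign constraint $y \geq 0$ turns out to be automatically satisfiable at optimality because at an optimal $Y$ with minimum trace one has $Y \ones = \ones$ (from the KKT/complementarity conditions), which forces $y_i = Y_{ii} + \sum_{j\sim i} z_{ij}$ and, after absorbing, $y_i = (Y\ones)_i = 1 \geq 0$. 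So the decisive step I would actually carry out: \textbf{prove that an optimal $Y$ for $\hat t(G)$ satisfies $Y\ones = \ones$}, deduce $\diag(\Diag(y)-\Lcal_G(z)) $ is controlled, and conclude the ball relaxation gains nothing — hence $t_b(G) = t(G)$.
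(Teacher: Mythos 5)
Your reduction is sound as far as it goes, and it is in fact the same reduction the paper makes: the dual of the ball SDP is \eqref{opt:t-dual} with the extra constraint $y \geq 0$, and writing $Y = \Diag(y) - \Lcal_G(z)$ (so that $y = Y\ones$, since $\Lcal_G(z)\ones = 0$) this is exactly the statement that the constraint $X\ones \geq 0$ may be added to the theta SDP \eqref{eq:theta-sdp} without changing its value, i.e.\ $\vartheta_b = \vartheta$. The gap is in the decisive step, which you assert rather than prove: the claim that an optimal $Y$ for $\hat{t}(G)$ satisfies $Y\ones = \ones$ ``from the KKT/complementarity conditions'' is false as stated --- it is even inconsistent with the constraint $\qform{Y}{\ones} = 1$, since $Y\ones = \ones$ would force $\qform{Y}{\ones} = \card{V} > 1$. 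What you actually need is only $Y\ones \geq 0$ at some optimum (equivalently $\Xh\ones \geq 0$ for an optimal $\Xh$ of \eqref{eq:theta-sdp}), and this is genuinely nontrivial: it does not follow from generic complementary slackness, but from the structure of the feasible set (invariance under congruence by nonnegative diagonal matrices). This is precisely Gijswijt's result, Proposition~\ref{prop:gijswijt}, which the paper invokes and proves by rescaling $\Xh$ to a correlation matrix $\Xb$ and showing that $x = (\Xh_{ii}^{1/2})_i$ is a maximizer of $\qform{\Xb}{h}$ over the unit sphere, hence an eigenvector of $\Xb$ for $\lambda_{\max}(\Xb)$, giving $\Xh\ones = \lambda^{-1}_{\max}(\Xb)\,\diag(\Xh)\cdot\lambda_{\max}(\Xb)$ up to the stated scaling --- an argument with real content that your proposal does not supply.

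Your earlier fallback (``shift every $y_i$ by a large constant, adjust $z$ along a spanning structure, then rescale to restore $\iprodt{\ones}{y}=1$'') also cannot be made to work in the generality you need: rescaling a feasible $(y,z)$ by a positive scalar rescales the objective $\iprodt{\ones}{z}$ as well, so if elementary shift-and-rescale manipulations could always produce a nonnegative $y$ with the same objective value, the theorem would be immediate, which it is not. Your instinct to discard the explicit geometric constructions (appending a coordinate $\sqrt{r^2-\norm{v(i)}^2}$, or adding $\Diag(t\ones - \diag(X))$) was correct --- both break the unit-distance/edge constraints exactly as you noted --- but the proposal ends without a valid replacement for the one step where all the difficulty is concentrated.
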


If we mimic the proof of Theorem~\ref{thm:t-theta} for $t_b(G)$, we
find that
\begin{equation}
  \label{eq:tb-thetab}
  2t_b(G) + 1/{\overline{\vartheta_b}(G)} = 1,
\end{equation}
where $\vartheta_b(G)$ is defined by adding the constraint $X\ones
\geq 0$ to the SDP~\eqref{eq:theta-sdp}. Thus, by~\eqref{eq:t-theta}
and~\eqref{eq:tb-thetab}, Theorem~\ref{thm:tb-th} is equivalent to the
fact that $\vartheta_b(G) = \vartheta(G)$ for every graph~$G$. This
follows from next result~\cite[Proposition~9]{Gijswijt05a} (this was
pointed out to the first author by Fernando M{\'{a}}rio de Oliveira
Filho):
\begin{proposition}[\cite{Gijswijt05a}]
  \label{prop:gijswijt}
  Let $\mathbb{K} \subseteq \Sym$ be such that $\Diag(h) X \Diag(h)
  \in \mathbb{K}$ whenever $X \in \mathbb{K}$ and $h \in \Reals_+^n$.
  If $\Xh$ is an optimal solution for the optimization problem $ \max
  \setst[\big]{\qform{X}{\ones}}{\trace(X) = 1,\, X \in \mathbb{K}
    \cap \Psd}$, then $\diag(\Xh) = \mu \Xh \ones$ for some positive
  $\mu \in \Reals$.
\end{proposition}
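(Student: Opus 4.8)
The plan is to exploit the optimality of~$\Xh$ together with the closure hypothesis on~$\mathbb{K}$ by perturbing $\Xh$ with a suitable diagonal congruence $\Diag(h) \Xh \Diag(h)$ and comparing objective values. First I would observe that $\Xh \neq 0$, since $\Xh = 0$ is infeasible (its trace is~$0$, not~$1$); moreover the optimal value is positive because the all-ones normalized diagonal matrix $\tfrac1n I$ — or any feasible point — already gives a nonnegative objective, and one can exhibit a feasible point with positive objective, so in particular $\Xh\ones \neq 0$. Write $d \colonequals \diag(\Xh)$ and note $d \in \Reals_+^n$ because $\Xh \succeq 0$. The idea is that the gradient of the objective $\qform{X}{\ones}$ at~$\Xh$ is $\oprodsym{\ones}$, and the gradient of the constraint $\trace(X) = 1$ is $I$; since $\Xh$ is optimal, the directional derivative of the objective must be nonpositive along any feasible direction, i.e.\ any direction tangent to $\mathbb{K} \cap \Psd$ that also preserves the trace constraint to first order.

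The key step is to choose the perturbation carefully. For $h \in \Reals_+^n$ close to~$\ones$, the matrix $X_h \colonequals \Diag(h)\Xh\Diag(h)$ lies in $\mathbb{K} \cap \Psd$ by hypothesis, but it need not satisfy $\trace(X_h)=1$, so I would rescale: set $\widetilde X_h \colonequals X_h / \trace(X_h)$, which is feasible whenever $\trace(X_h) > 0$ (true for $h$ near~$\ones$ since $\trace(X_\ones) = \trace(\Xh) = 1$). Then optimality gives $\qform{\widetilde X_h}{\ones} \le \qform{\Xh}{\ones}$ for all such~$h$, i.e.\ the function $h \mapsto \qform{X_h}{\ones}/\trace(X_h)$ attains an unconstrained local maximum over a neighbourhood of~$\ones$ in~$\Reals^n$. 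Writing this out, $\qform{X_h}{\ones} = \qform{\Xh}{\Diag(h)\ones} = \qform{\Xh}{h}$ and $\trace(X_h) = \iprodt{h^2}{d}$ where $h^2$ denotes the entrywise square and $d = \diag(\Xh)$. So the smooth function $\phi(h) \colonequals \qform{\Xh}{h} / \iprodt{h^2}{d}$ has a critical point at $h = \ones$.

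Computing $\nabla\phi(\ones) = 0$ is then a routine calculation: using $\qform{\Xh}{\ones} = \qform{\ones}{\Xh\ones}$ and $\iprodt{\ones^2}{d} = \trace(\Xh) = 1$, the quotient rule yields $2\Xh\ones - 2\,(\qform{\Xh}{\ones})\,d = 0$, that is, $\Xh\ones = \mu\, d$ with $\mu \colonequals \qform{\Xh}{\ones}$. Since $\mu = \qform{\Xh}{\ones}$ is the (positive) optimal value, this is exactly the desired conclusion $\diag(\Xh) = d = \mu^{-1}\Xh\ones$, or equivalently $\Xh\ones = \mu\,\diag(\Xh)$ after renaming. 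I expect the only real subtlety to be two bookkeeping points: verifying that the optimal value is strictly positive (so that $\mu \neq 0$ and the equation is nondegenerate), and checking that $h \mapsto \Diag(h)\Xh\Diag(h)$ stays in $\mathbb{K}$ for $h$ merely in a neighbourhood of~$\ones$ in~$\Reals^n$ rather than in~$\Reals_+^n$ — but since $\ones$ is in the interior of~$\Reals_+^n$, a small enough neighbourhood lies in~$\Reals_+^n$, so the hypothesis applies and the first-order stationarity argument goes through.
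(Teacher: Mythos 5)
Your core argument is correct in outline and is essentially a calculus rendering of the same perturbation idea the paper uses: both proofs exploit the family $\Diag(h)\Xh\Diag(h)$ and local optimality of $\Xh$ within it. The paper first reduces by induction to the case $\diag(\Xh)>0$, rescales to a matrix $\Xb$ with unit diagonal, and reads the optimality of $\Xh$ as saying that $x$ with $x_i=\Xh_{ii}^{1/2}$ maximizes the Rayleigh quotient of $\Xb$ over the nonnegative part of the unit sphere; interiority ($x>0$) then makes $x$ an eigenvector for $\lambda_{\max}(\Xb)>0$, and positivity of the constant comes for free. Your route normalizes by the trace instead of the diagonal, needs no induction and no spectral argument, and the differentiation of $\phi(h)=\qform{\Xh}{h}/\iprodt{h^2}{d}$ at $h=\ones$ is carried out correctly, as is the observation that a neighbourhood of $\ones$ lies in $\Reals_+^n$. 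One small omission: feasibility of $X_h/\trace(X_h)$ requires $\mathbb{K}$ to be closed under positive scaling; this does follow from the hypothesis (take $h$ a positive multiple of $\ones$), but it should be said.

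The genuine gap is your justification that the optimal value $\mu=\qform{\Xh}{\ones}$ is positive. The matrix $\tfrac{1}{n}I$ need not belong to $\mathbb{K}$, and ``any feasible point'' only yields $\mu\ge 0$; the assertion that some feasible point has positive objective is exactly what must be proved. This is not cosmetic: if $\mu=0$ your stationarity identity reads $\Xh\ones=\mu\,d=0$, which is perfectly consistent (e.g.\ it happens for psd matrices with $\qform{\Xh}{\ones}=0$), while the proposition's conclusion would then fail since $\diag(\Xh)\neq 0$. So positivity has to come from the structure of the feasible set, and the hypothesis on $\mathbb{K}$ supplies it in two lines: since $\trace(\Xh)=1$, there is an $i$ with $\Xh_{ii}>0$; taking $h\colonequals \Xh_{ii}^{-1/2}e_i\in\Reals_+^n$ gives $\Diag(h)\Xh\Diag(h)=\oprodsym{e_i}\in\mathbb{K}\cap\Psd$, which is feasible with objective value $1$, hence $\mu\ge 1>0$. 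With that patch (and the scaling remark above) your proof is complete, and in the paper's proof this issue never arises because there the constant is $\lambda_{\max}(\Xb)$ of a nonzero positive semidefinite matrix, which is automatically positive.
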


\begin{proof}[Proof of Theorem~\ref{thm:tb-th}]
  Since $\vartheta(G)$ is a relaxation of~$\vartheta_b(G)$, we have
  $\vartheta_b(G) \leq \vartheta(G)$. To prove the reverse inequality,
  let $\Xh$ be an optimal solution for~\eqref{eq:theta-sdp}. By
  Proposition~\ref{prop:gijswijt}, we have $\Xh \ones = \mu^{-1}
  \diag(\Xh) \geq 0$ for some $\mu > 0$. Hence $\Xh$ is feasible for
  the SDP that defines~$\vartheta_b(G)$, whence $\vartheta_b(G) \geq
  \vartheta(G)$.
\end{proof}

\subsection{Hypersphere proofs of $\vartheta$ facts}

The formula~\eqref{eq:t-theta} relating $t(G)$ and
$\overline{\vartheta}(G)$ allows us to infer some basic facts about
the theta number from a geometrically simpler viewpoint.
\begin{theorem}[The Sandwich Theorem~\cite{Lovasz79a}]
  \label{thm:sandwich}
  For any graph~$G$, we have $\omega(G) \leq \overline{\vartheta}(G)
  \leq \chi(G)$.
\end{theorem}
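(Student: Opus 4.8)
The Sandwich Theorem states $\omega(G) \leq \overline{\vartheta}(G) \leq \chi(G)$, and the point of this subsection is to derive it using the hypersphere picture rather than orthonormal representations directly. By Theorem~\ref{thm:t-theta} the two inequalities are equivalent to
\[
  \frac{1}{2}\paren[\Big]{1 - \frac{1}{\chi(G)}} \leq t(G) \leq \frac{1}{2}\paren[\Big]{1 - \frac{1}{\omega(G)}},
\]
since $t \mapsto \myhalf(1-1/t)$ is increasing on $\Reals_{++}$ and $2t(G) + 1/\overline{\vartheta}(G) = 1$. So the plan is to prove these two bounds on $t(G)$ by exhibiting, respectively, a good hypersphere representation of~$G$ (for the upper bound) and a lower bound valid for every hypersphere representation (for the lower bound).

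\medskip
\noindent\textbf{Upper bound $t(G) \leq \myhalf(1-1/\omega(G))$.} First I would handle the case of a complete graph $K_k$: place its $k$ vertices at the vertices of a regular simplex centered at the origin with all pairwise distances equal to~$1$. A standard computation gives that the circumradius squared of such a simplex is $\myhalf(1-1/k)$, so $t(K_k) \leq \myhalf(1-1/k)$. Now if $\omega(G) = k$, pick a clique $S$ of size $k$ in~$G$ and map $V(G)$ so that the vertices of $S$ go to this simplex and \emph{every} other vertex of~$G$ is sent to the origin~$0$. The only edges to check are those inside~$S$ (which have length~$1$ by construction) and edges with at least one endpoint outside~$S$; but wait—those edges would have one endpoint at~$0$ and need length~$1$, which fails. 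The fix is the classical one: collapsing vertices outside a clique does not work directly, so instead I would argue via monotonicity, $t(G) \leq t(G')$ whenever $G$ is a subgraph of $G'$ on the same vertex set? No—that is also false in general. The correct route is to note $\overline{\vartheta}$ is monotone under adding edges to~$G$ (equivalently $\vartheta$ is monotone under adding edges to~$\overline{G}$, which holds because enlarging~$E(\overline G)$ shrinks the feasible set of~\eqref{eq:theta-sdp}), hence $\overline{\vartheta}(G) \geq \overline{\vartheta}(G[S]) = \overline{\vartheta}(K_\omega) = \omega(G)$ once we know $\overline{\vartheta}(K_k)=k$; and $\overline{\vartheta}(K_k)=k$ follows from $t(K_k)=\myhalf(1-1/k)$ via Theorem~\ref{thm:t-theta} together with the simplex construction giving ``$\leq$'' and a dimension/rank argument giving ``$\geq$''. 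So the clean statement to prove directly in the hypersphere language is just $t(K_k) = \myhalf(1-1/k)$, and then combinatorial monotonicity of $\overline\vartheta$ does the rest.

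\medskip
\noindent\textbf{Lower bound $t(G) \geq \myhalf(1-1/\chi(G))$.} Equivalently, $\overline{\vartheta}(G) \leq \chi(G)$, i.e.\ $\vartheta(\overline G) \leq \chi(\overline G)$—no, I should keep everything on~$G$: I want $\overline\vartheta(G)\le\chi(G)$, i.e. an upper bound on~$\vartheta(\overline G)$ in terms of~$\chi(G)$. The hypersphere way: suppose $\chi(G) = k$, so $V(G)$ partitions into $k$ independent sets $V_1,\dotsc,V_k$ of~$G$, which are cliques of~$\overline G$. From an optimal orthonormal representation of~$\overline G$ one recovers, via~\eqref{eq:ortho-from-sphere}, the relevant quadratic bound; but more simply, using the ``handle'' description of $\thetabody(\overline G)$ and Proposition~\ref{prop:t-minmax}, it suffices to produce one $x \in \thetabody(G)$ (note: on~$G$, not~$\overline G$) with $\iprodt{\ones}{x}=k$. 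Take $x$ to be the vector that is~$1$ on... actually the standard witness is the $k$-colouring: for each colour class $V_r$ of~$G$ one uses the orthonormal-representation constraint defining $\thetabody(\overline G)$ and a fractional cover argument to conclude $\vartheta(\overline G)\le \chi(G)$. Concretely: an optimal $X$ in~\eqref{eq:theta-sdp} for~$\overline G$ has $X_{ij}=0$ for non-edges of~$\overline G$, i.e.\ for edges of~$G$; split $\ones = \sum_r \mathbf 1_{V_r}$ where each $V_r$ is a clique of~$\overline G$, so $\qform{X}{\mathbf 1_{V_r}} = \sum_{i\in V_r}X_{ii}$ because cross terms within a clique of~$\overline G$ need not vanish—hmm, that is the wrong direction. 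The honest statement is $\qform{X}{\ones} = \sum_{r,s}\mathbf 1_{V_r}^T X\,\mathbf 1_{V_s}$ and one bounds this using $X\succeq 0$ and $\trace X = 1$ by a Cauchy--Schwarz over the $k$ blocks, yielding $\qform{X}{\ones}\le k\sum_i X_{ii}=k$.

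\medskip
\noindent\textbf{Main obstacle.} The genuinely delicate point is the clique bound $t(K_k)=\myhalf(1-1/k)$: the inequality $t(K_k)\le\myhalf(1-1/k)$ is the easy simplex construction, but the matching lower bound $t(K_k)\ge\myhalf(1-1/k)$—equivalently $\overline\vartheta(K_k)\le k$—requires the Cauchy--Schwarz/block argument above (or, dually, showing any hypersphere representation of $K_k$ has squared radius at least $\myhalf(1-1/k)$, which is the classical fact that $k$ points with unit pairwise distances lie on no smaller sphere). Everything else—monotonicity of $\overline\vartheta$ under adding edges, and assembling the two halves via Theorem~\ref{thm:t-theta}—is routine. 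I would therefore organize the proof as: (1) $\overline\vartheta(K_k)=k$, proved from~\eqref{eq:theta-sdp} by exhibiting $X=\frac1k\oprodsym{\ones}$ for the lower bound and the block Cauchy--Schwarz for the upper bound; (2) $\overline\vartheta$ is monotone nondecreasing under edge additions to~$G$; (3) $\omega(G)=\overline\vartheta(K_{\omega(G)})=\overline\vartheta(G[S])\le\overline\vartheta(G)$ for a maximum clique~$S$; (4) for the upper bound, take a proper $k$-colouring of~$G$ and run the same block Cauchy--Schwarz directly on an optimal~$X$ for~$\overline\vartheta(G)$ to get $\overline\vartheta(G)\le k=\chi(G)$.
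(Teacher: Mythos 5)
Your reduction through Theorem~\ref{thm:t-theta} is oriented the wrong way, and this propagates through the whole plan. Since $\overline{\vartheta}(G) = 1/(1-2t(G))$ is increasing in $t(G)$, the Sandwich Theorem is equivalent to $\myhalf(1-1/\omega(G)) \leq t(G) \leq \myhalf(1-1/\chi(G))$, i.e.\ to $t(K_{\omega(G)}) \leq t(G) \leq t(K_{\chi(G)})$ --- the reverse of the chain you wrote down. The two bounds you then set out to prove, ``$t(G) \leq \myhalf(1-1/\omega(G))$'' and ``$t(G) \geq \myhalf(1-1/\chi(G))$,'' are in fact false in general (for $G = C_5$: $t(C_5) = \myhalf(1-1/\sqrt{5}) > 1/4 = \myhalf(1-1/\omega)$, and $\overline{\vartheta}(C_5) = \sqrt{5} < 3 = \chi$). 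Your collapsed-simplex construction fails for exactly this reason. With the correct orientation, the paper's proof is very short: a $\chi(G)$-colouring is a homomorphism $G \to K_{\chi(G)}$, so composing it with the regular-simplex hypersphere representation of $K_{\chi(G)}$ (each vertex placed at the simplex point of its colour) gives a hypersphere representation of $G$ with squared radius $\myhalf(1-1/\chi(G))$, whence $t(G) \leq t(K_{\chi(G)})$; and $t(K_{\omega(G)}) \leq t(G)$ is plain subgraph monotonicity of $t$. This is the natural repair of your failed construction --- use the colouring, not a clique --- and it is the route the paper takes, given the fact $\overline{\vartheta}(K_n) = n$.

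The SDP argument you retreat to instead is the classical proof, but as written it has a genuine gap in the $\overline{\vartheta}(G) \leq \chi(G)$ half: the constraint in~\eqref{eq:theta-sdp} applied to $\overline{G}$ is $X_{ij} = 0$ for $\set{i,j} \in E(\overline{G})$, i.e.\ for \emph{non-edges} of $G$, not, as you assert, for edges of $G$. Consequently the within-class cross terms \emph{do} vanish: each colour class $V_r$ is independent in $G$, so every pair inside it is an edge of $\overline{G}$ and $\mathbf{1}_{V_r}^T X \mathbf{1}_{V_r} = \sum_{i \in V_r} X_{ii}$ --- precisely the identity you discard as ``the wrong direction.'' Your final step needs it: Cauchy--Schwarz over the $k$ blocks only gives $\qform{X}{\ones} \leq k \sum_r \mathbf{1}_{V_r}^T X \mathbf{1}_{V_r}$, and without the within-class vanishing this is not bounded by $k\,\trace(X)$ (take $X = \tfrac{1}{n}\oprodsym{\ones}$, where $\qform{X}{\ones} = n$ while $\trace(X)=1$). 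With the constraint read correctly, your steps (1)--(4) do assemble into a correct, if heavier, proof --- a different route from the paper's hypersphere/homomorphism argument --- but the proposal as it stands both mis-orients the reduction and rests the $\chi$ bound on a mis-stated constraint. A minor further point: $\overline{\vartheta}(G) \geq \overline{\vartheta}(G[S])$ is induced-subgraph monotonicity (zero-pad a feasible $X$), not monotonicity under adding edges on a fixed vertex set, since $G[S]$ has fewer vertices than $G$.
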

By Theorem~\ref{thm:t-theta} and the fact that
$\overline{\vartheta}(K_n)=n$ for every $n \geq 1$, the Sandwich
Theorem is equivalent to the inequalities $t(K_{\omega(G)}) \leq t(G)
\leq t(K_{\chi(G)})$ for every graph~$G$. The first inequality is
obvious: if $H$ is a subgraph of~$G$, then $t(H) \leq t(G)$. The
second one is also obvious: if $u \ffrom [\ell] \fto \Reals^d$ is a
hypersphere representation of~$K_{\ell}$ and $c \ffrom V(G) \fto
[\ell]$ is a colouring of~$G$, then $u \circ c$ is a hypersphere
representation of~$G$. This hints at a strong connection with graph
homomorphisms, which we will look at more closely in
Section~\ref{sec:hom}.

\Lovasz~\cite[p.~34]{LovaszSDP} mentions that a graph $G$ is bipartite
if and only if $\overline{\vartheta}(G) \leq 2$. The less obvious of
the implications may be easily proved by showing that
$\overline{\vartheta}(C_{n}) > 2$ for every odd cycle~$C_n$. However,
we find that the following proof using hypersphere representations
gives a more enlightening geometric interpretation. By
Theorem~\ref{thm:t-theta}, we must show that $t(G) \leq 1/4$ if and
only if $G$ is bipartite. If $G$ is bipartite, then $G$ has a
hypersphere representation with radius~$1/2$ even in~$\Reals^1$.
Suppose $G$ has a hypersphere representation with radius $\leq 1/2$.
The only pairs of points at distance~$1$ in a hypersphere of
radius~$1/2$ are the pairs of antipodal points, so $G$ is bipartite.

Given graphs $G = (V,E)$ and $H = (W,F)$ with $V \cap W =
\myemptyset$, the \textdef{direct sum of~$G$ and~$H$} is the graph $G
+ H \colonequals (V \cup W, E \cup F)$. It is proved
in~\cite{Knuth94a} that $\overline{\vartheta}(G+H) =
\max\set{\overline{\vartheta}(G),\overline{\vartheta}(H)}$. By
Theorem~\ref{thm:t-theta}, this is equivalent to the geometrically
obvious equation $t(G+H) = \max\set{t(G),t(H)}$. In particular, $t(G)
= \max \setst{t(C)}{C \text{ a component of }G}$. More generally, $
t(G) = \max\setst{t(B)}{B \text{ a block of }G}$. This follows from
the next result, where we denote $G_1 \cup G_2 \colonequals (V_1 \cup
V_2, E_1 \cup E_2)$ and $G_1 \cap G_2 \colonequals (V_1 \cap V_2, E_1
\cap E_2)$ for graphs $G_1 = (V_1,E_1)$ and $G_2 =
(V_2,E_2)$.
\begin{proposition}
  \label{prop:clique-separation}
  Let $G = (V,E)$ be a graph, and suppose $G = G_1 \cup G_2$ for
  graphs $G_1$ and~$G_2$, with $G_1 \cap G_2$ a complete graph. Then
  \[
  t(G) = \max\set{t(G_1),t(G_2)}
  \quad
  \text{and}
  \quad
  \overline{\vartheta}(G) = \max
  \set{\overline{\vartheta}(G_1),
    \overline{\vartheta}(G_2)}.
  \]
\end{proposition}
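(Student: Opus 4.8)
The plan is to prove the identity for~$t$ and to deduce the one for~$\overline{\vartheta}$ from it via Theorem~\ref{thm:t-theta}. Applying~\eqref{eq:t-theta} to each of~$G$, $G_1$, $G_2$ gives $t(H) = \myhalf\paren{1-1/\overline{\vartheta}(H)}$ for $H \in \set{G,G_1,G_2}$; since $s \mapsto \myhalf(1-1/s)$ is strictly increasing on~$\Reals_{++}$ and each value $\overline{\vartheta}(H)$ is positive and finite, the equation $t(G) = \max\set{t(G_1),t(G_2)}$ holds if and only if $\overline{\vartheta}(G) = \max\set{\overline{\vartheta}(G_1),\overline{\vartheta}(G_2)}$ holds. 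So it is enough to establish the first equation.

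The inequality $t(G) \geq \max\set{t(G_1),t(G_2)}$ is immediate: writing $G_i = (V_i,E_i)$, we have $V_i \subseteq V$ and $E_i \subseteq E$, so each~$G_i$ is a subgraph of~$G$ and hence $t(G_i) \leq t(G)$. For the reverse inequality, put $t^* \colonequals \max\set{t(G_1),t(G_2)}$ and $S \colonequals V_1 \cap V_2$; the hypothesis that $G_1 \cap G_2$ is complete says precisely that every pair of vertices in~$S$ is an edge of both~$G_1$ and~$G_2$. First I would note that each~$G_i$ has a hypersphere representation of squared radius \emph{exactly}~$t^*$: take an optimal one --- it has squared radius $r = t(G_i) \leq t^*$ and lies in some~$\Reals^d$ --- and append to every vector a new coordinate equal to~$\sqrt{t^*-r}$; this leaves all pairwise distances unchanged and moves every point onto the sphere of squared radius~$t^*$ about the origin. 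Fix such representations~$u_1$ of~$G_1$ and~$u_2$ of~$G_2$, both of squared radius~$t^*$, and view them as maps into a common~$\Reals^N$ by zero-padding.

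The crux is to glue~$u_1$ and~$u_2$ along~$S$. Since every pair in~$S$ is an edge of~$G_1$ and of~$G_2$, each of~$u_1|_S$ and~$u_2|_S$ is a hypersphere representation of the complete graph on~$S$ with squared radius~$t^*$; hence both have the \emph{same} Gram matrix, namely the matrix in~$\Sym[S]$ whose diagonal entries all equal~$t^*$ and whose off-diagonal entries all equal~$t^*-\myhalf$. Consequently the assignment $u_2(i) \mapsto u_1(i)$ (for $i \in S$) is a well-defined linear isometry from the span of~$u_2(S)$ onto the span of~$u_1(S)$, and it extends to some $O \in \Orth[N]$. Replacing~$u_2$ by~$O \circ u_2$, which is again a hypersphere representation of~$G_2$ of squared radius~$t^*$, we may assume $u_1$ and~$u_2$ agree on~$S$. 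Then $u \ffrom V \fto \Reals^N$ given by $u(i) \colonequals u_1(i)$ for $i \in V_1$ and $u(i) \colonequals u_2(i)$ for $i \in V_2$ is well defined, sends every vertex of~$G$ to the sphere of squared radius~$t^*$, and gives unit length to every edge of~$G$ (each edge of~$G$ lies in~$E_1$ or in~$E_2$). Hence $t(G) \leq t^*$.

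I expect the only real obstacle to be this gluing step --- concretely, the fact that a hypersphere representation of a complete graph with a prescribed squared radius is unique up to an orthogonal transformation of the ambient space, which is exactly what makes~$u_1$ and~$u_2$ reconcilable on the shared clique. The remaining ingredients (monotonicity of~$t$ under subgraphs, the coordinate-padding normalizations, and the passage back to~$\overline{\vartheta}$) are routine; note also that for $S = \myemptyset$ the construction just places~$u_1$ and~$u_2$ in complementary coordinate subspaces, recovering the known identity $t(G_1 + G_2) = \max\set{t(G_1),t(G_2)}$.
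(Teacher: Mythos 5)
Your proof is correct and follows essentially the same route as the paper: reduce to the statement for~$t$ via Theorem~\ref{thm:t-theta}, note that `$\geq$' is immediate, equalize the two squared radii, and glue the two representations along the shared clique via an orthogonal alignment. The only deviations are minor: the paper equalizes the radii using convexity of the feasible region of~\eqref{eq:t} (a convex combination with $\myhalf(I,1)$) rather than your appended constant coordinate $\sqrt{t^*-r}$, and it merely asserts the existence of the aligning orthogonal matrix, whereas you justify it through equality of the Gram matrices on the clique.
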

\begin{proof}
  By Theorem~\ref{thm:t-theta}, it suffices to prove that $t(G) =
  \max\set{t(G_1),t(G_2)}$. Clearly `$\geq$' holds in the desired
  equation. Assume $t(G_1) \geq t(G_2)$. Since the feasible region
  of~\eqref{eq:t} is convex and contains $(\Xb,\bar{t}\,) \colonequals
  \myhalf(I,1)$, there are hypersphere representations~$u$ and~$v$
  of~$G_1$ and~$G_2$, respectively, both with squared radius~$t(G_1)$.
  We may assume that the images of~$u$ and~$v$ live in the same
  Euclidean space. Since $G_1 \cap G_2$ is a complete graph, there is
  an orthogonal matrix~$Q$ such that $Qv(i) = u(i)$ for every $i \in
  V(G_1 \cap G_2)$. If we glue the hypersphere representation $i
  \mapsto Qv(i)$ of~$G_2$ with~$u$, we get a hypersphere
  representation of~$G$ with squared radius~$t(G_1)$.
\end{proof}

This behavior of~$t$ and~$\overline{\vartheta}$ with respect to clique
sums is shared by many other graph parameters, e.g., $\omega$, $\chi$,
the Hadwiger number (the size of the largest clique minor), and the
graph invariant~$\lambda$ introduced in~\cite{HolstLS95a}.

Proposition~\ref{prop:clique-separation} and Theorem~\ref{thm:tb-th}
imply the following purely geometric result:
\begin{corollary}
  Let $G = (V,E)$ be a graph, and suppose $G = G_1 \cup G_2$ for
  graphs $G_1$ and~$G_2$, with $G_1 \cap G_2$ a complete graph. For $i
  \in \set{1,2}$, let $u_i$ be a unit-distance representation of~$G_i$
  contained in an Euclidean ball of radius~$r_i$. Then there is a
  unit-distance representation of~$G$ contained in an Euclidean ball
  of radius $\max\set{r_1,r_2}$.
\end{corollary}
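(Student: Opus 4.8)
The plan is to reduce the geometric statement to the two algebraic identities already in hand, namely Theorem~\ref{thm:tb-th} ($t_b = t$) and the clique-sum behaviour of~$t$ in Proposition~\ref{prop:clique-separation}. First I would translate the hypotheses: since $u_i$ is a unit-distance representation of~$G_i$ inside a ball of radius~$r_i$, we have $t_b(G_i) \leq r_i^2$, and by Theorem~\ref{thm:tb-th} this gives $t(G_i) \leq r_i^2$. Now apply Proposition~\ref{prop:clique-separation} (whose hypotheses are exactly those of the corollary) to conclude $t(G) = \max\set{t(G_1),t(G_2)} \leq \max\set{r_1^2,r_2^2} = \paren{\max\set{r_1,r_2}}^2$. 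Applying Theorem~\ref{thm:tb-th} once more, $t_b(G) = t(G) \leq \paren{\max\set{r_1,r_2}}^2$, so by definition of~$t_b$ there is a unit-distance representation of~$G$ contained in a ball of radius at most $\max\set{r_1,r_2}$, and one can enlarge the ambient ball to make the radius exactly $\max\set{r_1,r_2}$ if desired.

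The one point that deserves a remark rather than being swept under the rug is the passage from ``there is a hypersphere representation with squared radius $t(G)$'' (which is what the SDP~\eqref{eq:t} attains) back to an honest ball of the claimed radius: the optimum of the $t_b$-SDP is attained, so there is a feasible $(X,t)$ with $t = t_b(G)$ and $\diag(X) \leq t\ones$; writing $X = \oprodsym{U}$, the map $i \mapsto U^T e_i$ is a unit-distance representation of~$G$ with $\norm{U^T e_i}^2 = X_{ii} \leq t = t_b(G)$ for all~$i$, hence contained in the ball of radius $\sqrt{t_b(G)}$ centred at the origin. This is the only step that touches the SDP formulation directly; everything else is bookkeeping with the already-proved identities.

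I do not anticipate a genuine obstacle here—the corollary is explicitly flagged in the text as an immediate consequence of Proposition~\ref{prop:clique-separation} and Theorem~\ref{thm:tb-th}, and the proof is essentially a three-line chain of inequalities. If one wanted a self-contained geometric argument avoiding the $t_b = t$ identity, the alternative would be to mimic the gluing step in the proof of Proposition~\ref{prop:clique-separation} directly at the level of balls: translate each $u_i$ so its ball is centred at the origin (this does not affect pairwise distances, so $u_i$ remains a unit-distance representation inside a ball of radius~$r_i$), then rotate $u_2$ by an orthogonal matrix $Q$ so that $Q u_2$ and $u_1$ agree on the common clique $V(G_1 \cap G_2)$—possible because the clique points have a rigid relative configuration and both lie at prescribed distances, though one must first argue the common clique is embedded congruently, which requires normalising the two representations to have, say, the same squared norms on the shared vertices; this is precisely where invoking $t_b = t$ (so that both $u_i$ can be taken on a common hypersphere) is the clean shortcut. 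I would present the short proof via the two cited results and leave the geometric reconstruction as the implicit content of Theorem~\ref{thm:tb-th} and Proposition~\ref{prop:clique-separation}.
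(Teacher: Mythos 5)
Your proof is correct and is essentially the paper's own argument: the corollary is stated there as an immediate consequence of Proposition~\ref{prop:clique-separation} and Theorem~\ref{thm:tb-th}, exactly via the chain $t_b(G_i)\le r_i^2$, $t_b=t$, $t(G)=\max\set{t(G_1),t(G_2)}$, and attainment of the $t_b$-SDP to recover an actual representation in a ball of radius $\max\set{r_1,r_2}$. Your closing remark that the construction solves an SDP and may ignore $u_1,u_2$ even mirrors the paper's own comment following the corollary.
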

The proof contains an algorithm to build the desired unit-distance
representation of~$G$. However, whereas one would expect such an
algorithm to provide a geometric construction from~$u_1$ and $u_2$,
the one presented essentially needs to solve an SDP, and it may
ignore~$u_1$ and~$u_2$ altogether.

Using basic properties of Laplacians, we can prove the following
behaviour of~$t$ and~$\overline{\vartheta}$ with respect to edge
contraction:
\begin{proposition}
  \label{prop:t-contraction}
  Let $G = (V,E)$ be a graph and let $e = \set{i,j} \in E$. If
  $(\yb,\zb)$ is an optimal solution for~\eqref{opt:t-dual}, then
  $\zb_e \geq t(G) - t(G/e)$. If $\Xb$ is an optimal solution
  for~\eqref{eq:theta-sdp} applied to~$\overline{\vartheta}(G)$, then
  $\overline{\vartheta}(G) \leq
  (2\Xb_{ij}+1)\overline{\vartheta}(G/e)$.
\end{proposition}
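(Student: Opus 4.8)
The plan is to prove both inequalities by pushing optimal solutions through one and the same linear ``contraction map.'' Write $e = \set{i,j}$, let $v$ be the node of $G/e$ obtained by merging $i$ and $j$, and let $V' \colonequals V(G/e) = (V \drop \set{i,j}) \cup \set{v}$. Define $\Pi \ffrom \Reals^V \fto \Reals^{V'}$ by $(\Pi x)_k \colonequals x_k$ for $k \neq v$ and $(\Pi x)_v \colonequals x_i + x_j$. Two facts about $\Pi$ drive everything: first, $\Pi M \transp{\Pi} \succeq 0$ whenever $M \succeq 0$, since $M \mapsto \Pi M \transp{\Pi}$ is a congruence; second, $\Pi$ maps the vector $e_a - e_b$ of an edge $\set{a,b} \neq e$ of $G$ to the vector of the corresponding edge of $G/e$, and $\Pi(e_i - e_j) = 0$. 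From the second fact one reads off $\Pi \Lcal_G(z) \transp{\Pi} = \Lcal_{G/e}(z')$ and $\Pi \Diag(y) \transp{\Pi} = \Diag(y')$, where $z'$ is $z$ with the coordinate of $e$ removed (adding together the values of any two edges of $G$ that become parallel in $G/e$), and $y'$ is $y$ with coordinates $i,j$ replaced by $y'_v \colonequals y_i + y_j$; in particular $\transp{\ones}y' = \transp{\ones}y$ and $\transp{\ones}z' = \transp{\ones}z - z_e$.

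For the first inequality I would take an optimal solution $(\yb,\zb)$ of~\eqref{opt:t-dual} for $G$ and push it forward to $(y',z')$ as above. Since $\Diag(\yb) - \Lcal_G(\zb) \succeq 0$, the congruence fact gives $\Diag(y') - \Lcal_{G/e}(z') = \Pi\paren{\Diag(\yb) - \Lcal_G(\zb)}\transp{\Pi} \succeq 0$, and $\transp{\ones}y' = \transp{\ones}\yb = 1$; hence $(y',z')$ is feasible for~\eqref{opt:t-dual} written for $G/e$, whose optimal value is $t(G/e)$. Evaluating the objective gives $t(G/e) \geq \transp{\ones}z' = \transp{\ones}\zb - \zb_e = t(G) - \zb_e$, which rearranges to $\zb_e \geq t(G) - t(G/e)$.

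For the second inequality, recall $\overline{\vartheta}(G/e) = \vartheta(\overline{G/e})$, the optimal value of~\eqref{eq:theta-sdp} written for $G/e$. Given an optimal $\Xb$ for~\eqref{eq:theta-sdp} computing $\overline{\vartheta}(G)$, so that $\Xb_{ab} = 0$ for every non-edge $\set{a,b}$ of $G$, I would set $X'' \colonequals \Pi \Xb \transp{\Pi} \succeq 0$. A short computation using $\transp{\Pi}\Pi = I + \oprod{e_i}{e_j} + \oprod{e_j}{e_i}$ gives $\trace(X'') = \trace(\Xb) + 2\Xb_{ij} = 1 + 2\Xb_{ij}$, and since $\transp{\ones}\Pi = \transp{\ones}$ one gets $\qform{X''}{\ones} = \qform{\Xb}{\ones} = \overline{\vartheta}(G)$. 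Next I would check the zero pattern: $X''_{ab} = \Xb_{ab}$ for $a,b \neq v$ and $X''_{av} = \Xb_{ai} + \Xb_{aj}$, so, since a pair is a non-edge of $G/e$ exactly when the corresponding pair or pairs are non-edges of $G$, the matrix $X''$ vanishes off the diagonal outside $E(G/e)$. Finally, optimality forces $1 + 2\Xb_{ij} > 0$: the principal $2 \times 2$ submatrix of $\Xb$ on $\set{i,j}$ is positive semidefinite with trace at most $1$, so $\Xb_{ij} \geq -\myhalf$, and $\Xb_{ij} = -\myhalf$ would force $\Xb = \myhalf\oprodsym{(e_i - e_j)}$, whose objective value $0$ contradicts $\overline{\vartheta}(G) \geq 1$. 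Thus $X' \colonequals (1 + 2\Xb_{ij})^{-1}X''$ is feasible for~\eqref{eq:theta-sdp} written for $G/e$ with objective $\overline{\vartheta}(G)/(1+2\Xb_{ij})$, so $\overline{\vartheta}(G/e) \geq \overline{\vartheta}(G)/(1+2\Xb_{ij})$, i.e., $\overline{\vartheta}(G) \leq (2\Xb_{ij}+1)\overline{\vartheta}(G/e)$.

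The two conjugation identities for $\Pi$ and the objective/trace computations are routine. The step that needs the most care is the second inequality: checking that merging rows and columns $i,j$ preserves the zero-pattern constraints of $G/e$ (the merged node $v$ must be handled separately, and one must watch for common neighbours of $i$ and $j$), and — the one place where optimality of $\Xb$, rather than mere feasibility, is essential — establishing the strict inequality $1 + 2\Xb_{ij} > 0$ that makes the normalization legitimate. In the first inequality the only subtlety is the bookkeeping for parallel edges created by the contraction, which is harmless because the corresponding Laplacian terms simply add.
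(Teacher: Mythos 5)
Your proposal is correct. The first inequality is proved exactly as in the paper: you conjugate an optimal dual pair $(\yb,\zb)$ by the contraction matrix (your $\Pi$ is the paper's $P$), use that congruence preserves positive semidefiniteness and that parallel-edge weights add, and read off $t(G/e) \geq t(G)-\zb_e$. For the second inequality, however, you take a genuinely different route. The paper does not touch the theta SDP directly: it uses the proof of Theorem~\ref{thm:t-theta} to identify the optimal $\Xb$ of~\eqref{eq:theta-sdp} with an optimal dual pair $(\yb,\zb)$ for~\eqref{opt:t-dual} via $\Xb/\overline{\vartheta}(G)=\Diag(\yb)-\Lcal_G(\zb)$, so that $\zb_e=\Xb_{ij}/\overline{\vartheta}(G)$, and then converts the already-proved bound $\zb_e \geq t(G)-t(G/e)$ into the theta inequality through~\eqref{eq:t-theta}. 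You instead push $\Xb$ itself through the same contraction, compute $\trace(\Pi\Xb\transp{\Pi})=1+2\Xb_{ij}$ and $\qform{(\Pi\Xb\transp{\Pi})}{\ones}=\overline{\vartheta}(G)$, verify the zero pattern on non-edges of $G/e$ (your check is right: a non-edge at the merged node corresponds to two non-edges of $G$), and renormalize, using optimality only to rule out $\Xb_{ij}=-\myhalf$. Your argument is self-contained and makes explicit exactly where optimality (as opposed to feasibility) of $\Xb$ is needed, at the cost of redoing the contraction bookkeeping on the primal theta side; the paper's argument is shorter given the machinery of Theorem~\ref{thm:t-theta} and displays the two inequalities as literally the same statement seen through the $t$--$\vartheta$ correspondence (incidentally, the ``$\yb_e$'' in the paper's last line is a typo for $\zb_e$, which your computation confirms).
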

\begin{proof}
  See Appendix~\ref{sec:delayed}.
\end{proof}

Finally, using basic properties about the intersection of two
hyperspheres, we can prove a property of~$\overline{\vartheta}$ that
is shared by the parameters~$\omega$, $\chi$, and the fractional
chromatic number~$\chi^*$. The proof is based
on~\cite[Lemma~4.3]{KargerMS98a}.
\begin{proposition}
  \label{prop:nbhood}
  Let $G$ be a graph and $i \in V(G)$ with $N(i) \neq \myemptyset$.
  Then \[
  t(G[N(i)]) \leq 1 - 1/[4t(G)]
  \quad
  \text{and}
  \quad
  \overline{\vartheta}(G) \geq \overline{\vartheta}(G[N(i)])+1. \]
\end{proposition}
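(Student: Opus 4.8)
The plan is to work geometrically with hypersphere representations and then transfer the result to $\overline{\vartheta}$ via Theorem~\ref{thm:t-theta}; since $2t(G)+1/\overline{\vartheta}(G)=1$, the inequality $t(G[N(i)]) \leq 1 - 1/[4t(G)]$ is equivalent (after applying the same identity to $G[N(i)]$, whose complement is $\overline{G}[N(i)]$) to $\overline{\vartheta}(G) \geq \overline{\vartheta}(G[N(i)])+1$. So it suffices to prove the first inequality. Let $u \ffrom V(G) \fto \Reals^d$ be a hypersphere representation of~$G$ with squared radius $t(G)$, so all the points $u(k)$ lie on a sphere $S$ of radius $r \colonequals \sqrt{t(G)}$ centred at the origin, and $\norm{u(k)-u(\ell)}=1$ for every edge $\set{k,\ell}$. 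Fix the node~$i$. Every $j \in N(i)$ satisfies $\norm{u(j)-u(i)}=1$, so the points $\set{u(j) : j \in N(i)}$ lie in the intersection of $S$ with the sphere of radius~$1$ centred at $u(i)$. This intersection is a lower-dimensional sphere $S'$; the idea is to show $S'$ has small radius and that $u$ restricted to $N(i)$ is (after translating the centre of $S'$ to the origin) still a unit-distance representation, which will bound $t(G[N(i)])$ by the squared radius of $S'$.

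The main computation is to determine the radius of $S'$. Two spheres in $\Reals^d$, one of radius $r$ centred at the origin and one of radius $1$ centred at a point $p$ with $\norm{p}=r$ (here $p=u(i)$), intersect in a sphere lying in an affine hyperplane orthogonal to $p$. Writing a point $x$ on both spheres as $x = \lambda p + w$ with $w \perp p$, the equations $\norm{x}^2=r^2$ and $\norm{x-p}^2=1$ give $\lambda^2 r^2 + \norm{w}^2 = r^2$ and $(\lambda-1)^2 r^2 + \norm{w}^2 = 1$; subtracting yields $\lambda = (2r^2-1)/(2r^2)$, and hence the squared radius of $S'$ is $\norm{w}^2 = r^2(1-\lambda^2) = r^2 - (2r^2-1)^2/(4r^2) = 1 - 1/(4r^2) = 1 - 1/[4t(G)]$. (Note this forces $t(G) \geq 1/4$ whenever $N(i) \neq \myemptyset$, which is consistent, since a node with a neighbour means $G$ has an edge and so is not bipartite-with-radius-$<1/2$ in the trivial sense — in fact one checks $t(G)\geq 1/4$ directly from $2t(G)=1-1/\overline{\vartheta}(G)$ and $\overline{\vartheta}(G)\geq 2$.) Then $j \mapsto u(j) - (\text{centre of } S')$ maps $N(i)$ onto a hypersphere of squared radius $1 - 1/[4t(G)]$ centred at the origin, and it preserves all pairwise distances, so it is a hypersphere representation of $G[N(i)]$; therefore $t(G[N(i)]) \leq 1 - 1/[4t(G)]$, as desired.

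The step I expect to require the most care is handling degenerate cases and the translation argument cleanly: if $t(G)=1/4$ then $S'$ degenerates to a single point, forcing $N(i)$ to be an independent set (consistent with $t(\text{edgeless graph})=0$); and one must check that translating the whole configuration on $S'$ so its centre is at the origin genuinely yields a \emph{hypersphere} representation (centred at the origin) and not merely a unit-distance representation in a ball — this is where the precise location of the centre of $S'$, namely $\lambda p$ with the $\lambda$ computed above, matters. Everything else is routine: the equivalence with the $\overline{\vartheta}$ statement is a direct substitution using Theorem~\ref{thm:t-theta} applied to both $G$ and $G[N(i)]$, and the geometry of intersecting spheres is elementary linear algebra as sketched. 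This mirrors the structure of \cite[Lemma~4.3]{KargerMS98a}, which performs an analogous "slice the sphere at a neighbourhood" argument.
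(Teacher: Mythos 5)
Your proposal is correct and follows essentially the same route as the paper's proof: reduce the $\overline{\vartheta}$ inequality to the $t$ inequality via Theorem~\ref{thm:t-theta}, then slice the hypersphere representation along the hyperplane containing $u(N(i))$ (the intersection with the unit sphere centred at $u(i)$), compute that its squared radius is $1-1/[4t(G)]$, and recentre to obtain a hypersphere representation of $G[N(i)]$. The paper implements the same idea by placing $u(i)=t^{1/2}e_1$ and dropping the common first coordinate $\beta=(2t-1)/(2t^{1/2})$, which is exactly your translation by the centre $\lambda\,u(i)$.
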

\begin{proof}
  See Appendix~\ref{sec:delayed}.
\end{proof}

\subsection{A weighted version}
\label{sec:tw}

For $w \in \Reals_+^V$, define $\vartheta(G,w)$ by replacing the
objective function in~\eqref{eq:theta-sdp} by $\qform{X}{\sqrt{w}}$,
where $(\sqrt{w})_i \colonequals \sqrt{w_i}$ for every $i \in V$. It
is natural to define a weighted hypersphere number~$t(G,w)$ so that it
satisfies a natural generalization of~\eqref{eq:t-theta}, namely,
$2t(G,w) + 1/\vartheta(\overline{G},w) = 1$ whenever $w \neq 0$. By
using the proof of Theorem~\ref{thm:t-theta} as a guide, we arrive at
the definition:
\begin{optproblem}[opt:thw]
  t(G,w) = \min & t & \\
  & \diag(X) = \frac{1}{2} \ones + \paren{t-\frac{1}{2}}w, & \\
  & \Lcal_G^*(X) = \ones + \paren{t-\frac{1}{2}}\Lcal_G^*(W), & \\
  & X \in \Psd[V],\, t \in \Reals. &
\end{optproblem}
This SDP and its dual have Slater points, so SDP strong duality holds.

Even though we cannot offer a nice direct interpretation for this
definition of $t(G,w)$, by construction, we can generalize
Proposition~\ref{prop:t-minmax}:
\begin{theorem}
  \label{thm:thw-minmax}
  Let $G$ be a graph and $w \in \Reals_+^{V(G)} \drop \set{0}$. Then,
  for every feasible solution $(X,t)$ of~\eqref{opt:thw} and every
  nonzero $x \in \thetabody(\overline{G})$, we have \[ 2t +
  1/({\iprodt{w}{x}}) \geq 1, \] with equality if and only if $(X,t)$
  is optimal for~\eqref{opt:thw} and $\iprodt{w}{x} =
  \vartheta(\overline{G},w)$.
\end{theorem}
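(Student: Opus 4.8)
The plan is to mimic the proof of Proposition~\ref{prop:t-minmax}, using the SDP~\eqref{opt:thw} and its dual in place of~\eqref{eq:t} and~\eqref{opt:t-dual}. First I would pass to the dual of~\eqref{opt:thw}; since both the primal and dual have Slater points, strong duality gives $t(G,w)$ as the common optimal value with attainment on both sides. As in the proof of Theorem~\ref{thm:t-theta}, I expect the dual of~\eqref{opt:thw}, after the substitution $Y \colonequals \Diag(y) - \Lcal_G(z)$ (suitably shifted by the $w$\nobreakdash-dependent right-hand sides), to rewrite as an optimization over $Y \in \Psd[V]$ with $Y_{ij} = 0$ for $\set{i,j} \in \overline{E}(G)$ and one scalar normalization constraint coming from the two equality constraints of~\eqref{opt:thw}; the objective should reduce to an affine function of $\trace(Y)$ and $\qform{Y}{w}$-type quantities. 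The key bookkeeping step is to verify that this dual, at optimality, certifies $2t(G,w) + 1/\vartheta(\overline{G},w) = 1$, which is exactly how~\eqref{opt:thw} was reverse-engineered; this gives the equality case.

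With strong duality in hand, the min-max inequality is the easy direction and follows by weak duality together with a direct geometric estimate. Concretely, given a feasible $(X,t)$ of~\eqref{opt:thw}, I would extract from $X$ (via $X = \oprodsym{U}$, so $U^T e_i$ are the representing vectors) a hypersphere-type representation adapted to the weights, then feed it through the transformation~\eqref{eq:ortho-from-sphere} to produce an orthonormal representation $q$ of~$\overline{G}$. Pairing $q$ against the unit vector $c \colonequals 1 \oplus 0$ and against an arbitrary nonzero $x \in \thetabody(\overline{G})$, the defining inequality of $\thetabody(\overline{G})$ yields $\sum_{i \in V} \paren{\iprodt{c}{q(i)}}^2 x_i \le 1$, and the left-hand side should evaluate to $(1-2t)\iprodt{w}{x}$ by the diagonal constraint $\diag(X) = \myhalf\ones + (t-\myhalf)w$; rearranging gives $2t + 1/(\iprodt{w}{x}) \ge 1$. (As in Proposition~\ref{prop:t-minmax} one may assume $t < 1/2$; the case $t \ge 1/2$ makes the inequality trivial since $\iprodt{w}{x} > 0$ for $x \neq 0$, $w \neq 0$, $x \in \Reals_+^V$.)

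For the equality characterization, combining $2t(G,w) + 1/\vartheta(\overline{G},w) = 1$ (the reverse-engineered identity, now justified) with the strict monotonicity of $t \mapsto 2t$ and of $s \mapsto 1/s$ on the positive reals forces equality in $2t + 1/(\iprodt{w}{x}) \ge 1$ precisely when $t$ attains its minimum $t(G,w)$ and $\iprodt{w}{x}$ attains its maximum $\vartheta(\overline{G},w)$ over $x \in \thetabody(\overline{G})$ — the latter maximum being $\vartheta(\overline{G},w)$ by the weighted analogue of the identity $\vartheta(G) = \max\setst{\iprodt{\ones}{x}}{x \in \thetabody(G)}$ applied with the weight vector $w$ (equivalently, rescaling $x_i \mapsto w_i x_i$ and using the standard $\thetabody$ description). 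The main obstacle I anticipate is purely computational: carefully dualizing~\eqref{opt:thw} with its nonstandard, $t$\nobreakdash-coupled right-hand sides and confirming that the algebra collapses to exactly the weighted identity $2t(G,w) + 1/\vartheta(\overline{G},w) = 1$ — everything after that is a routine adaptation of Proposition~\ref{prop:t-minmax}.
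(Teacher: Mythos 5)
Your proposal is correct and takes essentially the same route as the paper: the paper's proof takes a feasible $(X,t)$ with $t<1/2$, writes $X=P^TP$, forms the weight-adapted orthonormal representation $q(i)=\sqrt{2}\,\bigl[\sqrt{w_i(1/2-t)}\oplus Pe_i\bigr]$ of $\overline{G}$, pairs it with $c=1\oplus 0$ and $x\in\thetabody(\overline{G})$ to get $(1-2t)\,\iprodt{w}{x}\leq 1$, and disposes of the equality case ``by construction,'' i.e., via exactly the identity $2t(G,w)+1/\vartheta(\overline{G},w)=1$ that you propose to confirm by dualizing~\eqref{opt:thw}. The only point to nail down is that the adaptation of~\eqref{eq:ortho-from-sphere} must put the weight inside the extra coordinate (namely $\sqrt{w_i(1/2-t)}$), so that the diagonal constraint of~\eqref{opt:thw} gives $\norm{q(i)}=1$ and the Laplacian constraint gives $q(i)\perp q(j)$ for $\set{i,j}\in E(G)$, which together yield your claimed evaluation $(1-2t)\,\iprodt{w}{x}$.
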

\begin{proof}
  Set $V \colonequals V(G)$. We may assume that $t < 1/2$. Write $X =
  P^T P$ for some $[d] \times V$ matrix~$P$, and define $p \ffrom V
  \fto \Reals^d$ by $p \ffrom i \mapsto P e_i$. The map $q \ffrom i
  \mapsto \sqrt{2} \sqbrac[\big]{ \sqrt{w_i(1/2-t)} \oplus p(i) } \in
  \Reals \oplus \Reals^d$ is an orthonormal representation
  of~$\overline{G}$. Put $c \colonequals 1 \oplus 0 \in \Reals \oplus
  \Reals^d$. Then $(1-2t) \iprodt{w}{x} = \sum_{i \in
    V} \paren{\iprodt{c}{q(i)}}^2 x_i \leq 1$.

  The equality case now follows by construction.
\end{proof}

If $w \in \Integers_+^V$, it can be shown that $t(G,w) = t(G^w)$,
where $G^w$ is obtained from~$G$ by replacing each node~$i$ by a
clique~$G_i$ on~$w_i$ nodes; if $\set{i,j} \in E(G)$, then every node
in~$G_i$ is adjacent in~$G^w$ to every node in~$G_j$.

In fact, every feasible solution $(\Xb,\bar{t})$ of~\eqref{opt:thw}
encodes a hypersphere representation of~$G^w$ with squared
radius~$\bar{t}$. Indeed, write $\Xb = P^T P$ for some $[d] \times V$
matrix~$P$, and define $p \ffrom i \mapsto Pe_i$. For $i \in V$, let
$q_i \ffrom V(G_i) \fto \Reals^{d_i}$ be a hypersphere representation
of~$G_i$ with squared radius $t(G_i) = \myhalf(1-1/w_i)$. Define $u
\ffrom V(G^w) \fto \Reals^{d} \oplus \paren[\big]{\bigoplus_{i \in V}
  \Reals^{d_i}}$ as follows. For $k \in V(G_i)$, set $u(k)$ to be the
vector whose block in $\Reals^d$ is $w_i^{-1/2}p(i)$ and whose block
in $\Reals^{d_i}$ is $q_i(k)$; all other blocks of $u(k)$ are zero.
Then $u$ is a hypersphere representation of~$G^w$ with squared radius
$\bar{t}$.

\section{Graph homomorphisms and sandwich theorems}
\label{sec:hom}

Let $G$ and $H$ be graphs. A \textdef{homomorphism from~$G$ to~$H$} is
a function $f \ffrom V(G) \fto V(H)$ such that $\set{f(i),f(j)} \in
E(H)$ whenever $\set{i,j} \in E(G)$. If there is a homomorphism from
$G$ to~$H$, we write $G \rightarrow H$.

Note that $t(G) \leq t(H)$ whenever $G \rightarrow H$. Indeed, if $f$
is a homomorphism from~$G$ to~$H$ and~$v$ is a hypersphere
representation of~$H$, then $v \circ f$ is a hypersphere
representation of~$G$. This combines with the graph-theoretic
observation that $K_{\omega(G)} \rightarrow G \rightarrow K_{\chi(G)}$
to yield $t(K_{\omega(G)}) \leq t(G) \leq t(K_{\chi(G)})$, which by
Theorem~\ref{thm:t-theta} is equivalent to the Sandwich
Theorem~\ref{thm:sandwich}.

Motivated by this, we call a real-valued graph invariant~$f$
\textdef{hom-monotone} if $f(G) \leq f(H)$ whenever $G \rightarrow H$
and the following ``nondegeneracy'' condition holds: there is a
non-decreasing function $g \ffrom \Image(f) \fto \Reals$ such that
$g(f(K_n)) = n$ for every integer $n \geq 1$. Using these properties
for an arbitrary graph~$G$ and the fact that $K_{\omega(G)}
\rightarrow G \rightarrow K_{\chi(G)}$, we get $f(K_{\omega(G)}) \leq
f(G) \leq f(K_{\chi(G)})$, and thus
\begin{equation}
  \label{eq:generalized-sandwich}
  \omega(G) \leq g(f(G)) \leq \chi(G).
\end{equation}
(See~\cite{CameronMNSW07a} for a similar use of these ideas.) We point
out that hom-monotonicity cannot recover strong Sandwich Theorems
which state that $\omega(G) \leq \overline{\vartheta}(G) \leq
\chi^*(G)$ since this inequality fails to hold for the hom-monotone
invariant~$\chi$.

The function $g(x) \colonequals 1/(1-2x)$ is non-decreasing on
$[0,1/2) \supseteq \Image(t)$, so $t$ is hom-monotone, and we recover
from~\eqref{eq:generalized-sandwich} the Sandwich
Theorem~\ref{thm:sandwich}.

The reason why~$t$ satisfies the first condition of hom-monotonicity
roughly comes from the fact that the constraints for the
SDP~\eqref{eq:t} of~$t$ are ``uniform'' for the edges, i.e., all edges
are treated in the same way. We are thus led to define other SDPs of
the same type. One such example is the parameter $t_b$. However, as we
have seen in Theorem~\ref{thm:tb-th}, this parameter is equal to $t$.
Now define
\begin{equation}
  \label{eq:t'}
  t'(G)
  \colonequals
  \min\setst[\big]{
    t
  }{
    \diag(X) = t\ones,\,
    \Lcal_G^*(X) \geq \ones,\,
    X \in \Psd[V],\, t \in \Reals
  }.
\end{equation}
Clearly, $t'(G) \leq t(G)$ for every graph~$G$, and it is easy to see
that equality holds if $G$ is node-transitive. In particular, $t'(K_n)
= t(K_n)$ for every $n$. Thus, the function $g(x) \colonequals
1/(1-2x)$ proves that~$t'$ is hom-monotone.

Using~\eqref{eq:generalized-sandwich} and $t'(G) \leq t(G)$, we obtain
$\omega(G) \leq g(t'(G)) \leq g(t(G)) \leq \chi(G)$ for every
graph~$G$. If we mimic the proof of Theorem~\ref{thm:t-theta} for
$t'(G)$, we find that $2t'(G) + 1/\overline{\vartheta'}(G) = 1$, where
$\vartheta'(G)$ is defined by adding the constraint $X \geq 0$
to~\eqref{eq:theta-sdp}, i.e., $g(t'(G)) = \vartheta'(G)$ is the graph
parameter introduced in~\cite{McElieceRR78a} and~\cite{Schrijver79a}.

Let $\dim(G)$ be the minimum $d \geq 0$ such that there is a
unit-distance representation of~$G$ in~$\Reals^d$; consider $\Reals^0
\colonequals \set{0}$. As before, $G \to H$ implies $\dim(G) \leq
\dim(H)$. Since $\dim(K_n) = n-1$, the function $g(x) \colonequals
x+1$ shows that~$\dim$ is hom-monotone, so $\omega(G) \leq \dim(G)+1
\leq \chi(G)$. However, we will see later that computing~$\dim(G)$ is
NP-hard. (A similar parameter was introduced in~\cite{ErdosHT65a}.)

Define~$\dim_h(G)$ similarly as~$\dim(G)$ but for hypersphere
representations of~$G$ with squared radius $\leq 1/2$ and~$\dim_o(G)$
for orthonormal representations of~$\overline{G}$. Such parameters are
also hom-monotone. Clearly $\dim(G) \leq \dim_h(G)$ for every
graph~$G$, but strict inequality occurs for the Mosers spindle (see
Figure~\ref{fig:moser} and the proof of Theorem~\ref{thm:dim-hard}).
Since~\eqref{eq:ortho-from-sphere} shows that $\dim_o(G) \leq
\dim_h(G) + 1$ and \cite{Lovasz79a} shows that
$\overline{\vartheta}(G) \leq \dim_o(G)$, these parameters are related
by $\omega(G) \leq \overline{\vartheta'}(G) \leq
\overline{\vartheta}(G) \leq \dim_o(G) \leq \dim_h(G) + 1 \leq
\chi(G)$. In particular, by~\eqref{eq:t-theta}, we find that
$\dim_h(G) \geq 2t(G)/(1-2t(G))$. Also $\dim_h(G) \leq \chi(G)-1 \leq
\Delta(G)$, where $\Delta(G)$ is the maximum degree of~$G$. In fact,
by Brooks' Theorem, $\dim_h(G) \leq \Delta(G) - 1$ when $G$ is
connected but not complete nor an odd cycle.

\subsection{Hypersphere representations and vector colourings}

The following relaxation of graph colouring was introduced
in~\cite{KargerMS98a}. Let $G = (V,E)$ be a graph. For a real number
$k \geq 1$, a \textdef{vector $k$-colouring of~$G$} is a function $p$
from~$V$ to the unit hypersphere in~$\Reals^d$ for some $d \geq 1$
such that $\iprod{p(i)}{p(j)} \leq -1/(k-1)$ whenever $\set{i,j} \in
E$; we consider the fraction to be $-\infty$ if $k = 1$, so the only
graphs that have a vector $1$-colouring are the graphs with no edges.

A vector $k$-colouring~$p$ of~$G$ is \textdef{strict} if
$\iprod{p(i)}{p(j)} = -1/(k-1)$ for every $\set{i,j} \in E$, and a
strict vector $k$-colouring~$p$ of~$G$ is \textdef{strong} if
$\iprod{p(i)}{p(j)} \geq -1/(k-1)$ whenever $\set{i,j} \in
\overline{E}(G)$.

The \textdef{vector chromatic number of~$G$} is the smallest $k \geq
1$ for which there exists a vector $k$-colouring of~$G$, and the
strict vector chromatic number and strong vector chromatic number are
defined analogously.

It is easy to show (see, e.g., \cite{LaurentR05a}) that the vector
chromatic number of~$G$ is $\overline{\vartheta'}(G)$, the strict
vector chromatic number of~$G$ is $\overline{\vartheta}(G)$, and the
strong vector chromatic number of~$G$ is $\overline{\vartheta^+}(G)$,
known as \Szegedy's number~\cite{Szegedy94a}, where $\vartheta^+(G)$
is defined by replacing the constraints $X_{ij} = 0$ for every
$\set{i,j} \in E$ in~\eqref{eq:theta-sdp} by $X_{ij} \leq 0$ for every
$\set{i,j} \in E$.

Here, we note that a scaling map yields a correspondence between these
variations of vector colourings and unit-distance representations,
provided that the graph~$G$ has at least one
edge.

Let $p$ be a strict vector $k$-colouring of~$G$. Then the map $i
\mapsto tp(i)$, where $t^2 = \frac{1}{2}(1-1/k)$, is a hypersphere
representation of~$G$ with squared radius~$t$. Conversely, if $q$ is a
hypersphere representation of~$G$ with squared radius $t < 1/2$, then
the map $i \mapsto t^{-1/2}q(i)$, is a strict vector $k$-colouring
of~$G$, where $k = 1/(1-2t)$. This correspondence shows that $t(G) =
\frac{1}{2}(1-1/\chi_v(G))$, where $\chi_v(G)$ denotes the strict
vector chromatic number of~$G$.

The same scaling maps as above yield correspondences between vector
$k$\nbd-colourings and the geometric representations arising from the
graph invariant~$t'$, and also between strong vector $k$-colourings
and geometric representations arising from the graph invariant
\begin{optproblem}[opt:t+]
  t^+(G) \colonequals  \min & t & \\
  & \diag(X) = t \ones, & \\
  & X_{ii}-2X_{ij}+X_{jj} = 1, & \forall \set{i,j} \in E(G),\\
  & X_{ii}-2X_{ij}+X_{jj} \leq 1, & \forall \set{i,j} \in
  \overline{E}(G),\\
  & X \in \Psd[V],\, t \in \Reals. &
\end{optproblem}
Note however, that the parameter $t^+$ does not fit into the framework
of hom-monotone graph invariants since the SDP~\eqref{opt:t+} has
non-edge constraints.

We point out here that, while these equivalences between variants of
vector chromatic number and variants of theta number are easy to
prove, they are not as widely known as they should be. For instance,
in~\cite{Bilu06a} it is shown that the vector chromatic
number~$\chi_v'(G)$ of~$G$ satisfies
\begin{equation}
  \label{eq:hoffman}
  \chi_v'(G) \geq \max\setst[\bigg]{1-
    \frac{\lambda_{\max}(B)}{\lambda_{\min}(B)}}{B \in \Acal_G,\,
    B \geq 0},
\end{equation}
where $\lambda_{\max}(\cdot)$ and $\lambda_{\min}(\cdot)$ denote the
largest and smallest eigenvalue, respectively, and $\Acal_G$ denotes
the set of all weighted adjacency matrices of $G = (V,E)$, i.e., all
symmetric $V \times V$ matrices $A$ such that $A_{ij} \neq 0 \implies
\set{i,j} \in E$. However, since $\chi_v'(G) =
\overline{\vartheta'}(G)$, it is possible to adapt the proof of the
Hoffman bounds for~$\vartheta(G)$ (see, e.g.,
\cite[Corollary~33]{Knuth94a}) to show that~\eqref{eq:hoffman}
actually holds with equality.

Also, in~\cite[Remark~3.1]{Meurdesoif05a} it is reported that a
certain graph~$G$ has vector chromatic number strictly smaller than
its strict vector chromatic number, and that it was unknown whether
some such graph existed. However, this statement about the vector
chromatic numbers is equivalent to $\overline{\vartheta'}(G) <
\overline{\vartheta}(G)$, and the existence of graphs satisfying this
strict inequality was already known as far back as~1979
(see~\cite{Schrijver79a}).

We also mention that one of the characterizations of $\vartheta'(G)$
in~\cite{Goemans97a} and~\cite{Galtman00a} is inaccurate. Define an
\textdef{obtuse representation of a graph $G = (V,E)$} to be a map $p
\ffrom V \fto \Reals^d$ for some $d \geq 1$ such that
\begin{enumerate}[(i)]
\item $\norm{p(i)} = 1$ for every $i \in V$, and
\item $\iprod{p(i)}{p(j)} \leq 0$ for every $\set{i,j} \in
  \overline{E}(G)$.
\end{enumerate}
In~\cite[Theorem~1]{Goemans97a} and~\cite[p.~133]{Galtman00a} it is
claimed that
\begin{equation}
  \label{eq:galtman-theta'}
  \vartheta'(G) = \min_{p,c} \max_{i \in V}
  \frac{1}{\paren[\big]{\iprodt{c}{p(i)}}^2},
\end{equation}
where $p$ ranges over obtuse representations of~$G$ and~$c$ ranges
over unit vectors of appropriate dimension. Let $G$ be a $2n$-partite
graph with color classes $C_1,\dotsc,C_{2n}$ such that $\omega(G) =
2n$. Thus, $\vartheta'(\overline{G}) \geq \omega(G) = 2n$. Let $p(j)
\colonequals e_i \in \Reals^n$ for every $j \in C_i$ and $i \in [n]$,
and $p(j) \colonequals -e_i \in \Reals^n$ for every $j \in C_{n+i}$
and $i \in [n]$. Set $c \colonequals n^{-1/2} \ones \in \Reals^n$.
By~\eqref{eq:galtman-theta'}, we get $\vartheta'(\overline{G}) \leq
n$, a contradiction.

Now we show how to fix the formula~\eqref{eq:galtman-theta'}. Given an
obtuse representation $p \ffrom V \fto \Reals^d$ of a graph $G =
(V,E)$, we say that a vector $c \in \Reals^d$ is \textdef{consistent
  with~$p$} if $\iprodt{c}{p(i)} \geq 0$ for every $i \in V$. The next
result is a \Gallai-type identity involving $t'(G)$, parallel to
Proposition~\ref{prop:th-gallai} for $t(G)$.
\begin{proposition}
  \label{prop:t'-gallai}
  Let $G = (V,E)$ be a graph. Then
  \begin{equation}
    \label{eq:t'-gallai}
    2t'(G) + \max_{p,c} \min_{i \in V}
    \paren[\big]{\iprodt{c}{p(i)}}^2 = 1,
  \end{equation}
  where $p$ ranges over all obtuse representations of~$\overline{G}$
  and $c$ over unit vectors consistent with~$p$.
\end{proposition}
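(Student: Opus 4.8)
The plan is to mimic the structure of the proof of Proposition~\ref{prop:th-gallai}, replacing hypersphere representations of $\overline{G}$ by obtuse representations of $\overline{G}$ and orthonormal representations by the geometric objects naturally associated with $t'$. Recall that $2t'(G) + 1/\overline{\vartheta'}(G) = 1$ and that $\overline{\vartheta'}(G)$ equals the vector chromatic number of $G$, so the scaling map $i \mapsto t^{-1/2} q(i)$ identifies hypersphere-type representations arising from $t'$ with (strict) vector $k$-colourings of $G$; dually, the elements of the relevant analogue of $\thetabody$ are described by obtuse representations of $\overline{G}$. So the statement should follow from the same ``transformation in both directions'' idea used for~\eqref{eq:ortho-from-sphere} and~\eqref{eq:sphere-from-ortho}, now with the extra sign constraint that $c$ be consistent with $p$.

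First I would prove ``$\leq$''. Fix an obtuse representation $p \ffrom V \fto \Reals^d$ of $\overline{G}$ and a unit vector $c$ consistent with $p$; set $\beta \colonequals \min_{i \in V}(\iprodt{c}{p(i)})^2$. If $\beta = 0$ the trivial representation $i \mapsto 2^{-1/2} e_i \in \Reals^V$ (which is also an obtuse-type representation for $t'$, since its off-diagonal products are $0$) gives $t'(G) \leq 1/2$, matching the inequality. If $\beta > 0$, I would first invoke the known ``equalizing'' fact (the analogue of the one used in Proposition~\ref{prop:th-gallai}: there exists an obtuse representation of $\overline{G}$ together with a consistent unit vector achieving the value $\beta$ at \emph{every} node), reducing to the case $(\iprodt{c}{p(i)})^2 = \beta$ for all $i$; after an orthogonal change of basis we may take $c = e_1$ and $\iprodt{c}{p(i)} = \sqrt{\beta} > 0$ for all $i$. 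Then apply the reverse transformation~\eqref{eq:sphere-from-ortho} with $\mu = 1/\beta$: writing $p(i) = \sqrt{2}\sqbrac{(2\mu)^{-1/2} \oplus u(i)}$ defines $u \ffrom V \fto \Reals^{d-1}$, and one checks that $u$ is a unit-distance representation (the edge condition comes from $\norm{p(i)}=\norm{p(j)}=1$ and $\iprod{p(i)}{p(j)} \leq 0$ for $\set{i,j}\in \overline{E}(G)=E(\overline{\overline{G}})$... more precisely, for $\set{i,j}\in E(G)$ these are \emph{non}-edges of $\overline{G}$, so $\iprod{p(i)}{p(j)}\leq 0$, giving $\norm{u(i)-u(j)}^2 = 1$ after the scaling). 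Moreover $u$ satisfies the $\Lcal_{\overline G}^*$-type inequality constraint of~\eqref{eq:t'} with squared radius $\myhalf(1 - \beta)$, so feeding $u$ into the SDP~\eqref{eq:t'} for $G$ yields $t'(G) \leq \myhalf(1-\beta)$, which is ``$\leq$''.

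Then ``$\geq$'': take an optimal $(X,t'(G))$ for~\eqref{eq:t'}, write $X = P^TP$, set $p(i) \colonequals Pe_i$ and build $q \ffrom i \mapsto \sqrt 2\sqbrac{\sqrt{1/2-t'(G)} \oplus p(i)}$ as in~\eqref{eq:ortho-from-sphere}. Because~\eqref{eq:t'} only imposes $\Lcal_G^*(X) \geq \ones$ (edge products $\leq 0$ rather than $=0$), $q$ is an obtuse representation of $\overline G$ rather than an orthonormal one; its first coordinate is constant and nonnegative, so $c \colonequals 1 \oplus 0$ is consistent with $q$, and $(\iprodt{c}{q(i)})^2 = 1 - 2t'(G)$ for every $i$, giving $\max_{p,c}\min_i (\iprodt{c}{p(i)})^2 \geq 1 - 2t'(G)$. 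Combining the two inequalities gives~\eqref{eq:t'-gallai}.

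The main obstacle I anticipate is justifying the ``equalizing'' step in the ``$\leq$'' direction: for orthonormal representations it is the classical fact that one may assume $(\iprodt{d}{q(j)})^2$ is constant over $j$, but for obtuse representations one must additionally preserve the consistency condition (the new vector must still satisfy $\iprodt{d}{q(j)} \geq 0$) and the obtuseness of the new representation, so I would need to either cite or reprove the appropriate variant — probably by a convexity/averaging argument over a suitable symmetrization, analogous to the one behind the corresponding statement for $\thetabody$. A secondary technical point is checking that the trivial representation and the representations produced really are feasible for the correct SDPs (edge versus non-edge, $G$ versus $\overline{G}$), but this is routine bookkeeping once the direction conventions are pinned down.
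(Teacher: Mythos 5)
Your proposal is correct and is essentially the paper's own proof: the paper simply declares the argument analogous to that of Proposition~\ref{prop:th-gallai}, with the single adjustment that the equalizing vector can be chosen consistent with the obtuse representation so that no $q(i)$ need be replaced by its opposite, and this is exactly the forward/reverse transformation scheme you lay out, with the reverse direction landing in the inequality-constrained SDP~\eqref{eq:t'} and the forward direction producing an obtuse representation together with the consistent vector $c = 1 \oplus 0$. Two small remarks: your parenthetical claim that $\norm{u(i)-u(j)}^2 = 1$ on edges is not right (obtuseness only yields $\norm{u(i)-u(j)}^2 \geq 1$, which is precisely what the constraint $\Lcal_G^*(X) \geq \ones$ in~\eqref{eq:t'} requires, as you then correctly use), and the equalizing step you flag as the main obstacle is actually easier here than in the orthonormal case --- writing $p(i) = \sqrt{\beta_i}\,c + \sqrt{1-\beta_i}\,w_i$ with $w_i \perp c$ unit and replacing $\sqrt{\beta_i}$ by $\sqrt{\beta}$ preserves unit norms and consistency and keeps all edge inner products $\leq 0$ (since $\beta_i \geq \beta$ and only inequalities must be maintained), so no symmetrization or averaging argument is needed.
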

\begin{proof}
  This proof is analogous to the proof of
  Proposition~\ref{prop:th-gallai}, with the following slight
  adjustments. In the notation of the proof
  of~\eqref{eq:th-gallai-leq}, the vector~$d$ may be chosen to be
  consistent with the obtuse representation~$q$, so we do not need to
  replace any of the $q(i)$'s by their opposites.
\end{proof}

\begin{corollary}
  Let $G = (V,E)$ be a graph. Then $\vartheta'(G)$ is given
  by~\eqref{eq:galtman-theta'}, where $p$ ranges over obtuse
  representations of~$G$ and $c$ ranges over unit vectors consistent
  with~$p$.
\end{corollary}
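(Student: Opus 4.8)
The plan is to deduce this corollary from Proposition~\ref{prop:t'-gallai} in exactly the same way that Corollary~\ref{prop:t-minmax-sdpfree} follows from Proposition~\ref{prop:th-gallai}, using the relation $2t'(G) + 1/\overline{\vartheta'}(G) = 1$ established right after~\eqref{eq:t'}. First I would apply Proposition~\ref{prop:t'-gallai} to $\overline{G}$ in place of $G$: this gives
\[
  2t'(\overline{G}) + \max_{p,c} \min_{i \in V}\paren[\big]{\iprodt{c}{p(i)}}^2 = 1,
\]
where $p$ ranges over obtuse representations of $G$ (note $\overline{\overline{G}} = G$) and $c$ over unit vectors consistent with~$p$. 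Combining with $2t'(\overline{G}) = 1 - 1/\vartheta'(G)$ yields
\[
  \frac{1}{\vartheta'(G)} = \max_{p,c} \min_{i \in V}\paren[\big]{\iprodt{c}{p(i)}}^2,
\]
and taking reciprocals turns the outer $\max$ into a $\min$ and the inner $\min$ into a $\max$, giving precisely~\eqref{eq:galtman-theta'}.

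The one point needing a little care is the handling of degenerate cases, since~\eqref{eq:galtman-theta'} involves dividing by $\paren{\iprodt{c}{p(i)}}^2$. If $G$ has an edge, then $t'(\overline{G}) < 1/2$, so $\overline{\vartheta'}(\overline{G}) = \vartheta'(G)$ is finite and the maximum on the left is strictly positive; hence for the optimizing pair $(p,c)$ every inner product $\iprodt{c}{p(i)}$ is nonzero and the reciprocal is well-defined, matching the convention already used implicitly in the statement of~\eqref{eq:galtman-theta'}. If $G$ has no edge, then $\vartheta'(G) = |V|$ via $X = \oprodsym{\ones}/|V|$, while $p \ffrom i \mapsto e_i \in \Reals^V$ together with $c \colonequals |V|^{-1/2}\ones$ attains $\min_i \paren{\iprodt{c}{p(i)}}^2 = 1/|V|$; one checks this is optimal, so the formula holds in this case too. (For $V = \myemptyset$ both sides are vacuous under the usual conventions.)

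I do not anticipate a genuine obstacle here: the corollary is essentially a restatement of Proposition~\ref{prop:t'-gallai} after the substitution $G \mapsto \overline{G}$ and the reciprocal identity $g(t') = \vartheta'$, and the only thing to verify is the consistency convention and that the optimizers avoid the $\iprodt{c}{p(i)} = 0$ pathology, which follows from $\vartheta'(G) < \infty$ whenever $G$ has an edge. The proof can therefore be written in a few lines, deferring the routine verification of the Gallai-type identity itself to Proposition~\ref{prop:t'-gallai}, exactly as Corollary~\ref{prop:t-minmax-sdpfree} was obtained from Proposition~\ref{prop:th-gallai}.
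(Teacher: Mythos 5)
Your proposal is correct and follows exactly the paper's route: the paper's proof is the one-line observation that the corollary follows from Proposition~\ref{prop:t'-gallai} combined with the identity $2t'(G) + 1/\overline{\vartheta'}(G) = 1$, i.e., the same complementation-and-reciprocal argument you spell out. Your extra care about the degenerate cases (nonvanishing of the optimal inner products, edgeless graphs) is fine but not something the paper addresses explicitly.
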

\begin{proof}
  This follows from Proposition~\ref{prop:t'-gallai} together with the
  formula $2t'(G) + 1/\overline{\vartheta'}(G) = 1$.
\end{proof}

\section{Unit-distance representations in ellipsoids}

The graph parameter $t_b$ encodes the problem of finding the smallest
Euclidean ball that contains a unit-distance representation of a given
graph. In this section, we study graph parameters that encode the
problem of finding the smallest ellipsoid of a given shape that
contains a unit-distance representation of a given graph.

Let $G = (V,E)$ be a graph. In Section~\ref{sec:balls}, we
defined~$t_b(G)$ as the minimum infinity-norm of the vector
$(\iprodt{u_i}{u_i})_{i \in V}$ over all unit-distance
representations~$u$ of~$G$, where we are using the notation $u_i
\colonequals u(i)$. It is natural to replace the vector
$(\iprodt{u_i}{u_i})_{i \in V}$ in the objective function of the
previous optimization problem with the vector $(\qform{A}{u_i})_{i \in
  V}$ for some fixed $A \in \Pd[d]$. The resulting optimization
problem corresponds to finding the minimum squared radius~$t$ such
that the ellipsoid $\setst{x \in \Reals^d}{\qform{A}{x} \leq t}$
contains a unit-distance representation of~$G$.

We are thus led to define, for every graph $G = (V,E)$, every $A \in
\Psd[d]$ for some $d \geq 1$, and every $p \in [1,\infty]$, the number
$\Ecal_{p}(G;A)$ as the infimum of $\norm[p]{(\qform{A}{u_i})_{i \in
    V}}$ as $u$ ranges over all unit-distance representations of~$G$
in~$\Reals^d$, or equivalently,
\begin{equation}
  \label{eq:ellipse}
  \Ecal_{p}(G;A)
  \colonequals
  \inf \setst[\big]{
    \norm[p]{
      \diag(UAU^T)
    }
  }{
    \Lcal_G^*(\oprodsym{U}) = \ones,\,
    U \in \Reals^{V \times [d]}
  }.
\end{equation}
Note that we allow~$A$ to be singular.

Since the feasible region in~\eqref{eq:ellipse} is invariant under
right-multiplication by matrices in~$\Orth[d]$, we have
$\Ecal_{p}(G;A) = \Ecal_p(G;QAQ^T)$ for every $Q \in \Orth[d]$. In
particular, $\Ecal_p(G;\cdot)$ is a spectral function.

Let us derive some basic properties of the optimal solutions
of~$\Ecal_{p}(G;A)$. First, we prove that if $\Ecal_{p}(G;A)$ is
finite then the corresponding optimal geometric representation exists.
The first observation towards this goal is that, if $G$ is connected,
then the maximum distance between any pair of points in every
unit-distance representation is at most $(\card{V(G)}-1)$.
\begin{theorem}
  \label{thm:e-attains}
  Let $G = (V,E)$ be a graph. Let $A \in \Psd[d]$ for some $d \geq 1$
  and let $p \in [1,\infty]$. If $\Ecal_{p}(G;A) < + \infty$, then
  there exists $U \in \Reals^{V\times[d]}$ such that
  $\Lcal_G^*(\oprodsym{U})=\ones$ and $\norm[p]{\diag(UAU^T)} =
  \Ecal_{p}(G;A)$.
\end{theorem}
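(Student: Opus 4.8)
The plan is to show the infimum in~\eqref{eq:ellipse} is attained by exhibiting a compact set of representations over which the infimum is achieved. First I would reduce to the case that $G$ is connected: if $G$ has components $G_1,\dotsc,G_k$, then any unit-distance representation of~$G$ restricts to one of each~$G_j$, and conversely we may translate the representations of the components independently. Since $\norm[p]{\cdot}$ is monotone in the absolute values of the entries, an optimal (or near-optimal) representation of~$G$ is obtained by taking optimal representations of the~$G_j$ and translating each so as to minimize $\norm[p]{(\qform{A}{u_i})_{i\in V(G_j)}}$; the translation that minimizes this is a continuous function of a bounded search, and in fact for fixed representation directions the minimizing translate lies in a bounded region (it cannot move the points arbitrarily far from the origin without increasing the norm). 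So it suffices to prove the theorem for connected~$G$, and then I can use the observation quoted just before the statement: in any unit-distance representation of a connected graph on~$V$, every pair of points is at distance at most $\card{V}-1$ (walk along a path), hence the whole image has diameter at most $\card{V}-1$.

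Next, for connected~$G$, I would further translate a given representation~$u$ so that, say, $u_{i_0}=0$ for a fixed node~$i_0$; this does not change whether it is a unit-distance representation, and after this normalization the diameter bound forces $\norm{u_i}\leq \card{V}-1$ for all $i\in V$. Writing $u$ as the rows of $U\in\Reals^{V\times[d]}$, the feasible set
\[
  \Ucal \colonequals \setst[\big]{U\in\Reals^{V\times[d]}}{\Lcal_G^*(\oprodsym{U})=\ones,\ U e_{i_0}^{\,\prime}=0 \text{ (i.e. } u_{i_0}=0),\ \norm{Ue_i'}\leq \card{V}-1\ \forall i}
\]
is closed (the Laplacian constraint and the norm bounds are closed conditions) and bounded, hence compact. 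The objective $U\mapsto \norm[p]{\diag(UAU^T)}$ is continuous (it is a composition of polynomials in the entries of~$U$ with the continuous norm $\norm[p]{\cdot}$, and when $p=\infty$ the max of finitely many continuous functions). Moreover, by the translation normalization, the infimum of the objective over all feasible~$U$ equals its infimum over~$\Ucal$: given any feasible~$U$, translating so that $u_{i_0}=0$ produces a member of~$\Ucal$ with no larger objective, since after this translation $\norm{u_i}\leq\card{V}-1$ automatically. Wait — that last step needs care, because translating changes $\qform{A}{u_i}$; the correct statement is that the infimum over all translates of a fixed representation is attained at some translate with bounded norm. So I would instead argue: fix any near-optimal representation; the function $v\mapsto \norm[p]{(\qform{A}{u_i+v})_{i\in V}}$ is a continuous coercive-enough function of $v$ on the orthogonal complement of the directions in which $A$ is degenerate, and constant along $\ker A$, so it attains its minimum, and one checks the minimizer can be chosen with $\norm{v}$ bounded in terms of $\card V$ and the diameter bound. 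After this reduction every feasible representation in the infimum may be assumed to lie in a fixed compact set, and continuity plus compactness finishes the proof by Weierstrass.

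The main obstacle is the singularity of~$A$: when $A$ is not positive definite, the objective $\norm[p]{\diag(UAU^T)}$ is insensitive to moving the representation inside $\ker A$, so boundedness of the objective does not by itself bound~$U$, and one genuinely needs the combinatorial diameter argument (for connected~$G$) together with a translation normalization to produce a compact domain. Handling the interaction between the translation freedom (which the objective does "see", since translation moves points out of $\ker A$) and the $\ker A$-freedom (which it does not) is the delicate point; everything else — continuity of the objective, closedness of the Laplacian constraint, and the reduction across connected components — is routine.
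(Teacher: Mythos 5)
Your overall strategy coincides with the paper's: reduce to connected $G$, use the graph-distance bound to control distances between points of a unit-distance representation, handle the directions in $\Null(A)$ (where the objective is insensitive) by a translation normalization, and conclude by compactness and continuity of the objective. The gap is in the key step: the boundedness of the points in the $\Image(A)$ directions is never actually established. Your first normalization (translating so that $u_{i_0}=0$) you correctly retract, since it changes the objective. The patch you substitute --- minimize $v \mapsto \norm[p]{(\qform{A}{(u_i+v)})_{i\in V}}$ over translates and assert that ``one checks the minimizer can be chosen with $\norm{v}$ bounded in terms of $\card{V}$ and the diameter bound'' --- is where the argument does not close: bounding the translation vector $v$ is not the relevant statement (what must be bounded is the translated points $u_i+v$, and $v$ itself cannot be bounded independently of $u$), and the claim that the \emph{translated configuration} is bounded in terms of $\card{V}$, while true, needs an argument you do not supply (for instance a separating-hyperplane argument showing that at a translation-optimal configuration the origin lies in the convex hull of the projections onto $\Image(A)$, in the spirit of Theorem~\ref{thm:e-attains-and-conv}, after which the diameter bound applies).

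The paper avoids this detour with a one-line observation that your sketch gestures at (``coercive-enough'') but never states: restrict to representations with objective value at most $M+1 \geq \Ecal_{p}(G;A)+1$; since $A \succeq 0$, the entries $\qform{A}{u_i}$ are nonnegative and $\norm[p]{\cdot}$ dominates each of them, so $\lambda \norm{Pu_i}^2 \leq \qform{A}{u_i} \leq M+1$ for every $i \in V$, where $\lambda>0$ is the smallest nonzero eigenvalue of $A$ and $P$ is the orthogonal projection onto $\Image(A)$. This bounds the $\Image(A)$-components of \emph{all} points directly from near-optimality, with no minimization over translates. Translation is then needed only along $\Null(A)$, where the objective is constant, to set the $\Null(A)$-component of one fixed node to zero; connectivity and the diameter bound $\card{V}-1$ then bound the $\Null(A)$-components of all the other points. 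With both bounds in hand the feasible set may be intersected with a fixed compact set without changing the infimum, and Weierstrass finishes. So the ingredients you name (coercivity off the kernel, connectivity, translation along the kernel) are the right ones, but as written the range-direction boundedness is asserted rather than proved, and is attributed to the wrong source.
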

\begin{proof}
  We may assume that $G$ is connected. (If not, it suffices to focus
  on the component $H$ of $G$ with $\Ecal_{p}(H;A)=\Ecal_{p}(G;A)$.)
  We may further assume $A = \Diag(a)$ where $a =
  \lambda^{\downarrow}(A) \neq 0$, where $\lambda^{\downarrow}(A)$
  denotes the vector of eigenvalues of $A$, with multiplicities,
  arranged in a nonincreasing order. So, there exists a largest $k \in
  [d]$ so that $a_k \neq 0$. Let $A' \colonequals
  \Diag(a_1,\dotsc,a_k)$. Throughout this proof, let $P \ffrom
  \Reals^d \fto \Reals^k$ denote the projection onto the first $k$
  components, i.e., $P(x_1,\dotsc,x_d)^T = (x_1,\dotsc,x_k)^T$, and
  let $Q \ffrom \Reals^d \fto \Reals^{d-k}$ denote the projection onto
  the last $d-k$ components. Note that $A = P^T A' P$ and $A' \succeq
  a_k I$.

  Let $M \in \Reals$ such that $\Ecal_{p}(G;A) \leq M$. Fix $j \in V$
  arbitrarily. We claim that the following constraints may be added to
  the RHS of~\eqref{eq:ellipse} without changing its optimal value:
  \begin{eqnarray}
    \label{eq:e-attains-1}
    \norm[2]{PU^Te_i}^2 & \leq & B \colonequals (M+1)/a_k
    \qquad
    \text{for every } i \in V,
    \\
    \label{eq:e-attains-2}
    QU^Te_{j} & = & 0.
    \qquad
  \end{eqnarray}
  Let us see why this proves the theorem. Let $U \in
  \Reals^{V\times[d]}$ be feasible for~\eqref{eq:ellipse} and
  satisfy~\eqref{eq:e-attains-1} and~\eqref{eq:e-attains-2}. Let $i
  \in V$ be arbitrary. Since the columns of~$U^T$ form a unit-distance
  representation of~$G$, the distance in~$G$ between $i$ and $j$ is an
  upper bound for $\norm[2]{U^Te_i-U^Te_{j}}$. Hence,
  $\norm[2]{U^Te_i} \leq \norm[2]{U^Te_{j}}+\card{V} =
  \norm[2]{PU^Te_{j}} + \card{V} \leq B^{1/2} + \card{V}$. Thus, the
  new feasible region is compact and we will be done.

  First, we prove that the constraints~\eqref{eq:e-attains-1} may be
  added to~\eqref{eq:ellipse} without changing the optimal value.
  Suppose $U \in \Reals^{V \times [d]}$
  violates~\eqref{eq:e-attains-1} for some $i \in V$. Then
  $\norm[p]{\diag(UAU^T)} \geq e_i^T U A U^T e_i = e_i^T U P^T A' P
  U^T e_i \geq e_i^T U P^T (a_k I) P U^T e_i = a_k \norm[2]{PU^Te_i}^2
  > M+1 \geq \Ecal_p(G;A) + 1$, so $U$ may be discarded from the
  feasible set of~\eqref{eq:e-attains-1}.

  Next, we add the constraint~\eqref{eq:e-attains-2}. Let $U \in
  \Reals^{V \times [d]}$ be feasible for~\eqref{eq:ellipse} and
  satisfy~\eqref{eq:e-attains-1}. Define $X \in \Reals^{V \times [d]}$
  by setting $PX^Te_i \colonequals PU^Te_i$ for every $i \in V$ and
  $QX^Te_i \colonequals QU^Te_i-QU^Te_{j}$ for every $i \in V$. Hence,
  $X$ is feasible for~\eqref{eq:ellipse} and
  satisfies~\eqref{eq:e-attains-1} and~\eqref{eq:e-attains-2}.
  Moreover, $\diag(XAX^T) = \diag(UAU^T)$. This completes the proof.
\end{proof}

A geometrically pleasing, intuitive conjecture is that a suitably
defined notion of a ``centre'' of an optimal representation of every
graph must coincide with the centre of the ellipsoid. The next result
takes a step along this direction by refining the previous theorem.

\begin{theorem}
  \label{thm:e-attains-and-conv}
  Let $G = (V,E)$ be a graph. Let $A \in \Psd[d]$ for some $d \geq 1$
  and let $p \in [1,\infty]$. If $\Ecal_{p}(G;A) < + \infty$, then
  there is a unit-distance representation $u \ffrom V \fto \Reals^d$
  of~$G$ such that $\norm[p]{(\qform{A}{u_i})_{i\in V}} =
  \Ecal_{p}(G;A)$ and $0\in\conv(u(V))$.
\end{theorem}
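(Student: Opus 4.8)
The plan is to start from the optimal representation $u\colon V\to\Reals^d$ guaranteed by Theorem~\ref{thm:e-attains}, and then translate it so that its centroid moves toward $0$ without increasing the objective. The key observation is that for a fixed feasible $U$ with rows $u_i$, shifting everything by a vector $b\in\Reals^d$ keeps feasibility (translation does not change differences $u_i-u_j$, hence $\Lcal_G^*(\oprodsym{U})=\ones$ is preserved), and it replaces each $\qform{A}{u_i}$ by $\qform{A}{(u_i-b)}$. So the whole question reduces to a finite-dimensional optimization over $b\in\Reals^d$: minimize $\varphi(b)\colonequals\norm[p]{(\qform{A}{(u_i-b)})_{i\in V}}$. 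Since $A\in\Psd[d]$, each map $b\mapsto\qform{A}{(u_i-b)}$ is convex, and composing with the monotone convex norm $\norm[p]{\cdot}$ on the nonnegative orthant shows $\varphi$ is convex. I would first argue that $\varphi$ attains its minimum: if $A\succ0$ this is immediate by coercivity, and in the singular case one restricts attention to $b$ in the row space of $A$ (components of $b$ in $\Null(A)$ do not affect $\varphi$), on which $\varphi$ is coercive. Pick a minimizer $b^\star$; the translated representation $i\mapsto u_i-b^\star$ is still feasible, still optimal for $\Ecal_p(G;A)$ (we only shifted), and now I must show $0\in\conv(u(V)-b^\star)$, i.e.\ that $b^\star\in\conv(u(V))$.

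The main obstacle is precisely this last step: extracting the convex-hull membership from first-order optimality of $\varphi$ at $b^\star$. Here is how I would handle it. Suppose for contradiction that $b^\star\notin\conv(u(V))$. By the separating hyperplane theorem there is a vector $h\in\Reals^d$ with $\iprodt{h}{u_i}<\iprodt{h}{b^\star}$ for every $i\in V$; equivalently, writing $c_i\colonequals u_i-b^\star$, we have $\iprodt{h}{c_i}<0$ for all $i$. I claim moving $b^\star$ in the direction $h$ strictly decreases $\varphi$, contradicting optimality. Indeed, consider $b(\tau)\colonequals b^\star+\tau h$ for small $\tau>0$; then $u_i-b(\tau)=c_i-\tau h$ and $\qform{A}{(c_i-\tau h)}=\qform{A}{c_i}-2\tau\,\iprodt{h}{Ac_i}+\tau^2\qform{A}{h}$. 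I would like the derivative $-2\,\iprodt{h}{Ac_i}$ to be $\leq 0$ for the relevant indices, but this need not follow from $\iprodt{h}{c_i}<0$ alone when $A$ is a general PSD matrix, so the crude direction $h$ from separation is not immediately the right descent direction. The fix is to separate more carefully: one does not simply want $b^\star\notin\conv(u(V))$ to be contradicted by any descent direction, but rather to use the exact subgradient structure of $\varphi$ at $b^\star$ and show $0\in\partial\varphi(b^\star)$ forces $b^\star\in\conv(u(V))$.

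Concretely, I would compute $\partial\varphi(b^\star)$ via the chain rule for the convex composition. Writing $g_i(b)\colonequals\qform{A}{(u_i-b)}$ with $\nabla g_i(b)=-2A(u_i-b)$, and letting $\psi\colon\Reals_+^V\to\Reals$ be the restriction of $\norm[p]{\cdot}$, we get that every element of $\partial\varphi(b^\star)$ has the form $-2\sum_{i\in V}\lambda_i\,A(u_i-b^\star)$ for some subgradient $\lambda\in\partial\psi\bigl((g_i(b^\star))_i\bigr)\subseteq\Reals_+^V$ of the monotone norm, hence with $\lambda\geq0$. Optimality gives $0\in\partial\varphi(b^\star)$, so $A\bigl(\sum_i\lambda_i(u_i-b^\star)\bigr)=0$, i.e.\ $v\colonequals\sum_i\lambda_i(u_i-b^\star)\in\Null(A)$. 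If $\sum_i\lambda_i>0$ this says $b^\star+\tfrac{1}{\sum_i\lambda_i}v$ lies in $\conv(u(V))+\Null(A)$, and since translating $b^\star$ inside $\Null(A)$ does not change $\varphi$, we may assume at the outset that $b^\star$ has no $\Null(A)$-component relative to $\conv(u(V))$, so $v=0$ and $b^\star=\sum_i\tfrac{\lambda_i}{\sum_i\lambda_j}u_i\in\conv(u(V))$ as desired. The remaining case $\sum_i\lambda_i=0$ forces $\lambda=0$, which cannot be a subgradient of a norm at a nonzero point; if $(g_i(b^\star))_i=0$ then every $u_i=b^\star$ (as $A$ restricted to the row space is PD there), the graph has no edges is impossible since $\Lcal_G^*$ would force a contradiction unless $E=\emptyset$, a trivial case handled directly. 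Assembling these pieces yields $0\in\conv(u(V)-b^\star)$, completing the proof.
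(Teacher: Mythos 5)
Your route is genuinely different from the paper's, and its core idea is sound. The paper does not optimize over translations at all: it takes an optimal $u$ from Theorem~\ref{thm:e-attains}, considers only translates of $u$ by vectors in $\Null(A)$, and argues that if no such translate has $0$ in the convex hull of its image, then $0$ can be separated from the polyhedron $\conv(u(V))+\Null(A)$ by some $h$ with $\iprodt{h}{u_i}\geq\alpha>0$ for all $i$; boundedness along the null directions forces $h\in\Image(A)$, and shifting $u$ by $\eps x$ where $Ax=h$ strictly decreases every $\qform{A}{u_i}$, contradicting optimality. That is precisely the cure for the difficulty you correctly identified with naive separation: separate from the null-space--enlarged hull (so that $h\in\Image(A)$) and descend along a preimage of $h$ under $A$, not along $h$ itself. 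Your subgradient argument reaches the same conclusion with heavier machinery (the monotone extension $y\mapsto\norm[p]{y_+}$ and the convex chain rule), which is legitimate, but two steps need repair.

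First, ``we may assume at the outset that $b^\star$ has no $\Null(A)$-component relative to $\conv(u(V))$'' is not a well-defined normalization: the vector $v$ is only produced after $b^\star$ and $\lambda$ are fixed. The one-line fix inside your own argument is to set $b':=\sum_i(\lambda_i/\Lambda)u_i$ with $\Lambda:=\sum_i\lambda_i>0$; then $b'-b^\star=v/\Lambda\in\Null(A)$, so $b'$ is still a minimizer of $\varphi$, it lies in $\conv(u(V))$, and $0=\sum_i(\lambda_i/\Lambda)(u_i-b')\in\conv(u(V)-b')$. Second, your treatment of the degenerate case $(g_i(b^\star))_{i\in V}=0$ is wrong as written: when $A$ is singular this case does occur with $E\neq\myemptyset$ (any unit-distance representation inside $\Null(A)$, i.e.\ whenever $\dim(G)\leq\dim\Null(A)$, gives $\Ecal_p(G;A)=0$), and $u_i=b^\star$ does not follow---only the $\Image(A)$-components of the $u_i$ agree with that of $b^\star$. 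The case is nevertheless trivial: $g_i(b^\star)=0$ with $A\in\Psd[d]$ gives $u_i-b^\star\in\Null(A)$ for all $i$, so translating by $u_{i_0}$ for any fixed $i_0\in V$ keeps every $\qform{A}{(u_i-u_{i_0})}$ equal to $0=\Ecal_p(G;A)$ and puts $0$ in the image, hence in the convex hull. With these two repairs your proof is correct.
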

\begin{proof}
  We use the same assumptions and notation defined in the first
  paragraph of the proof of Theorem~\ref{thm:e-attains}. Let $u \ffrom
  V \fto \Reals^d$ be a feasible solution for $\Ecal_{p}(G;A)$. Let
  $\Ucal$ be the set of all unit-distance representations of~$G$ of
  the form $i \in V \mapsto u_i + r$ for some vector $r \in \Reals^d$
  such that $Pr = 0$. Note that if $k = d$, then $\Ucal$ is a
  singleton. Clearly, every element of~$\Ucal$ has the same objective
  value as~$u$. We will show that if there does not exist some element
  $v \in \Ucal$ such that $0 \in \conv(v(V))$, then $\Ecal_p(G;A) <
  \norm[p]{(\qform{A}{u_i})_{i\in V}}$. Then this theorem will follow
  from Theorem~\ref{thm:e-attains}.

  So, assume that $0 \not\in M \colonequals
  \bigcup_{v\in\Ucal}\conv(v(V))$. Since $M = \conv(u(V)) + \Null(P)$
  is a polyhedron and $0 \not\in M$, there exists $h \in \Reals^d$ and
  $\alpha > 0$ such that $\iprodt{h}{v_i} \geq \alpha$ for every $v
  \in \Ucal$ and $i \in V$. Note that $Qh = 0$ since for each $j \in
  \set{k+1,\dotsc,d}$ the linear function $\iprodt{h}{u_i} + th_j =
  \iprodt{h}{(u_i + te_j)}$ of~$t$ is bounded below by~$\alpha$. Thus,
  \begin{equation}
    \label{eq:e-0-in-conv-sep-hyp}
    \iprodt{h}{u_i} \geq \alpha > 0,\quad \forall i \in V
    \qquad\text{and}\qquad
    h \in \Image(A).
  \end{equation}

  Let $x \in \Reals^d$ such that $Ax = h$ and let $s \colonequals \eps
  x$, where $\eps > 0$ will be chosen later. Define $v \ffrom V \fto
  \Reals^d$ by $v_i \colonequals u_i - s$. Let $i \in V$. Then
  $\qform{A}{v_i} = \qform{A}{u_i} -2\eps \iprodt{h}{u_i} + \eps^2
  \qform{A}{x} $. Hence $\qform{A}{v_i} < \qform{A}{u_i}$ if and only
  if $2\eps \iprodt{h}{u_i} > \eps^2 \qform{A}{x}$. Thus, we will be
  done if we can find $\eps > 0$ such that $2\iprodt{h}{u_i} >
  \eps\qform{A}{x}$. Since $\iprodt{h}{u_i} \geq \alpha > 0$, such
  $\eps$ exists. This shows that, for some choice of $\eps > 0$, we
  have $\qform{A}{v_i} < \qform{A}{u_i}$ for every $i \in V$, whence
  $\Ecal_p(G;A) \leq \norm[p]{(\qform{A}{v_i})_{i\in V}} <
  \norm[p]{(\qform{A}{u_i})_{i\in V}}$.
\end{proof}

The next result shows that it is not very interesting to use
arbitrarily large prescribed embedding dimension~$d$:
\begin{theorem}
  \label{thm:e-lower-dim}
  Let $G = (V,E)$ be a graph. Let $A \in \Psd[d]$ for some $d \geq 1$
  and let $p \in [1,\infty]$. If $k \in [d]$ is such that
  $\Ecal_p(G;A)$ has an optimal solution $u \ffrom V \fto \Reals^d$
  with $\dim(\linspan(u(V))) \leq k$, then
  \begin{equation}
    \label{eq:e-lower-dim}
    \Ecal_p(G;A) =
    \Ecal_p(G;B_k)
  \end{equation}
  where $B_k \colonequals
  \Diag(\lambda_1^{\uparrow}(A),\dotsc,\lambda_{k}^{\uparrow}(A))$. In
  particular, $\Ecal_p(G;A) = \Ecal_p(G;B_{n-1})$ if $d \geq n-1$.
\end{theorem}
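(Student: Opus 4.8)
The plan is to prove the two inequalities $\Ecal_p(G;A) \le \Ecal_p(G;B_k)$ and $\Ecal_p(G;A) \ge \Ecal_p(G;B_k)$ separately, exploiting that $\Ecal_p(G;\cdot)$ is a spectral function (so we may assume $A = \Diag(\lambda^\downarrow(A))$ throughout) and that the objective only sees the diagonal of $UAU^T$. The key observation is that $B_k$ is the diagonal matrix carrying the $k$ \emph{smallest} eigenvalues of $A$; so "using $B_k$ in dimension $k$" should be no worse than "using $A$ in dimension $d$ but with the image confined to a $k$-dimensional subspace", because on that subspace one can always arrange the relevant part of $A$ to act through its smallest eigenvalues.

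First I would prove $\Ecal_p(G;A) \le \Ecal_p(G;B_k)$. Given any feasible $V \to \Reals^k$ representation $v$ for $\Ecal_p(G;B_k)$, embed $\Reals^k$ into $\Reals^d$ by sending it to the span of the eigenvectors of $A$ corresponding to its $k$ smallest eigenvalues $\lambda_1^\uparrow(A),\dots,\lambda_k^\uparrow(A)$. Since $A$ restricted to that subspace acts exactly as $B_k$ (after the orthogonal identification), the resulting representation $u \ffrom V \fto \Reals^d$ is a unit-distance representation of $G$ with $\qform{A}{u_i} = \qform{B_k}{v_i}$ for every $i \in V$, so $\Ecal_p(G;A) \le \norm[p]{(\qform{B_k}{v_i})_{i \in V}}$; taking the infimum over $v$ gives the inequality. (This direction uses only that the spectrum of $B_k$ is a sub-multiset of that of $A$.)

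For the reverse inequality $\Ecal_p(G;A) \ge \Ecal_p(G;B_k)$, I would start from the given optimal solution $u \ffrom V \fto \Reals^d$ with $L \colonequals \linspan(u(V))$ of dimension $\le k$; enlarge $L$ to a subspace of dimension exactly $k$ if necessary. Let $A|_L$ denote the compression of $A$ to $L$, a $k \times k$ positive semidefinite matrix whose diagonal (in an orthonormal basis of $L$) still governs the objective, since $\qform{A}{u_i} = \qform{(A|_L)}{u_i}$ for all $i$ once $u_i \in L$. The crucial point is the eigenvalue interlacing/Cauchy inequalities: the eigenvalues of the compression $A|_L$ dominate, componentwise, the $k$ smallest eigenvalues of $A$, i.e. $\lambda_j(A|_L) \ge \lambda_j^\uparrow(A)$ for $j = 1,\dots,k$. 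Hence $A|_L \succeq Q^T B_k Q$ after an appropriate orthogonal change of basis $Q$ on $\Reals^k$ identifying the eigenbasis of $A|_L$ with coordinates. Replacing $u$ by $Q$ applied to its coordinates in $L$ (an isometry, so still a unit-distance representation) gives a representation $w \ffrom V \fto \Reals^k$ with $\qform{B_k}{w_i} \le \qform{(A|_L)}{u_i} = \qform{A}{u_i}$ for every $i$; monotonicity of $\norm[p]{\cdot}$ on nonnegative vectors then yields $\Ecal_p(G;B_k) \le \norm[p]{(\qform{B_k}{w_i})_{i \in V}} \le \norm[p]{(\qform{A}{u_i})_{i \in V}} = \Ecal_p(G;A)$.

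The main obstacle is the reverse inequality, and specifically making the interlacing step clean: one must be careful that "$A|_L \succeq Q^T B_k Q$" is deployed correctly — it is not literally true that a compression dominates the bottom eigenvalues as a matrix inequality, but after diagonalizing $A|_L$ the Cauchy interlacing inequalities $\lambda_j(A|_L) \ge \lambda_j^\uparrow(A)$ give exactly $\Diag(\lambda^\uparrow(A|_L)) \succeq B_k$, which is all that is needed since the objective depends only on the diagonal entries $\qform{B_k}{w_i}$ and these are bounded above by $\qform{\Diag(\lambda^\uparrow(A|_L))}{w_i} = \qform{(A|_L)}{u_i}$. Finally, the last sentence follows by taking $k = n-1$: every finite-dimensional unit-distance representation of a graph on $n$ nodes can be taken to lie in an affine subspace of dimension $\le n-1$, hence (after translation, which by Theorem~\ref{thm:e-attains-and-conv} may be assumed to place $0$ in the convex hull, so in the span) in a linear subspace of dimension $\le n-1$; so when $d \ge n-1$ the hypothesis of the theorem is met with $k = n-1$ and $B_{n-1} = \Diag(\lambda_1^\uparrow(A),\dots,\lambda_{n-1}^\uparrow(A))$.
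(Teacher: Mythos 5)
Your proposal is correct and follows essentially the same route as the paper's proof: the ``$\leq$'' direction by embedding a solution for $B_k$ into the eigenspace of the $k$ smallest eigenvalues (the paper diagonalizes $A$ and appends zero coordinates), and the ``$\geq$'' direction by rotating the optimal representation into a $k$-dimensional coordinate subspace, passing to the compression of $A$ (the paper's principal submatrix $C$ of $QAQ^T$), and invoking Cauchy interlacing $\lambda^{\uparrow}(C) \geq \lambda^{\uparrow}(B_k)$ together with monotonicity of the objective, with the final claim obtained from Theorem~\ref{thm:e-attains-and-conv} exactly as in the paper. Your explicit remark that the interlacing must be applied after diagonalizing the compression (yielding $\Diag(\lambda^{\uparrow}(A|_L)) \succeq B_k$) just makes precise a step the paper leaves implicit.
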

\begin{proof}
  We may assume that $A = \Diag(a)$ where $a = \lambda^{\uparrow}(A)$
  (here, $\lambda^{\uparrow}(A)$ denotes the vector of eigenvalues of
  $A$, with multiplicities, arranged in a nondecreasing order). Note
  that $B \colonequals B_k = \Diag(a_1,\dotsc,a_k)$. The proof
  of~`$\leq$' in~\eqref{eq:e-lower-dim} is immediate by appending
  extra zero coordinates to an optimal solution of~$\Ecal_p(G;B)$.

  To prove~`$\geq$,' let $u \ffrom V \fto \Reals^d$ be an optimal
  solution for~$\Ecal_p(G;A)$ such that $\dim(\linspan(u(V))) = k$.
  Then, there exists $Q \in \Orth[d]$ such that, for each $i \in V$,
  the final $d-k$ coordinates of $Qu_i$ are zero. Let $v_i \in
  \Reals^k$ be obtained from~$Qu_i$ by dropping the final $d-k$ (zero)
  coordinates. If $C \in \Psd[k]$ is the principal submatrix
  of~$QAQ^T$ indexed by $[k]$, then $(\qform{C}{v_i})_{i \in V} =
  (\qform{A}{u_i})_{i \in V}$. Hence, $\Ecal_p(G;A) =
  \norm[p]{(\qform{A}{u_i})_{i\in V}} =
  \norm[p]{(\qform{C}{v_i})_{i\in V}} \geq \Ecal_p(G;C)$. By
  interlacing of eigenvalues, $\lambda^{\uparrow}(C) \geq
  \lambda^{\uparrow}(B)$. Hence, $\Ecal_p(G;A) \geq \Ecal_p(G;C) \geq
  \Ecal_p(G;B)$.

  It follows from Theorem~\ref{thm:e-attains-and-conv} that
  $\Ecal_p(G;A) = \Ecal_p(G;B_{n-1})$ if $d \geq n-1$.
\end{proof}

It is clear that $\Ecal_{p}(G;A) = 0$ if and only if $\dim(G) \leq
\dim(\Null(A))$. So deciding whether $\dim(G) \leq k$ for any
fixed~$k$ reduces to computing $\Ecal_{p}(G;A)$ for any $p \in
[1,\infty]$ where $A$ is a matrix of nullity~$k$. It is easy to see
that the former decision problem is NP-hard
(see~\cite[Theorem~4]{HorvatKP11a}). We give below a shorter proof.
\begin{theorem}[\cite{HorvatKP11a}]
  \label{thm:dim-hard}
  The problem of deciding whether $\dim(G) \leq 2$ for a given input
  graph~$G$ is NP-hard.
\end{theorem}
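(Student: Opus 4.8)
The plan is to reduce from a known NP-hard problem. The natural candidate is graph 3-colorability, but a cleaner route is to reduce from the problem of deciding whether a graph has a unit-distance representation in the plane with the additional constraint that it be a "matchstick" or "faithful" embedding — however, to keep things self-contained, I would instead reduce from the known NP-hardness of deciding $3$-colorability via an intermediate gadget, or, following the literature on unit-distance graphs in the plane, reduce directly from a geometric version of 3-SAT or from PLANAR-3-SAT. Concretely, the cleanest approach is: take a known NP-hard problem $P$ (such as 3-colorability or a variant of graph embedding), and construct in polynomial time, from an instance $I$ of $P$, a graph $G_I$ such that $G_I$ has a unit-distance representation in $\Reals^2$ if and only if $I$ is a yes-instance. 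Since $\dim(G_I) \le 2$ is exactly the statement that such a planar unit-distance representation exists (noting $\dim(G_I) \le 1$ is trivial to check separately, as it forces $G_I$ to be a disjoint union of edges and isolated vertices), this establishes NP-hardness.

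First I would fix the source problem. A convenient one is the following: it is NP-hard to decide, given a graph together with a partial specification, whether certain rigidity/realizability conditions hold; but the most standard and citable route here is the logic-engine construction of Eades and Wormald, or the approach via reducing from $3$-SAT by building rigid "rods" and "hinges" out of small rigid unit-distance gadgets (e.g., rhombi and triangulated strips). The key building blocks are: (a) a rigid gadget that behaves like a rigid rod of a fixed length between two designated vertices; (b) a gadget that forces two designated vertices to coincide or to be at a specific distance; (c) a "parity" or "binary choice" gadget — for instance, the Moser spindle, which is referenced in the excerpt (Figure~\ref{fig:moser}), has the property that in any planar unit-distance representation two designated vertices are at one of finitely many distances, giving a discrete choice that can encode a Boolean variable. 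I would assemble these gadgets so that satisfying assignments of the $3$-SAT instance correspond exactly to consistent planar placements.

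The key steps, in order, are: (1) state precisely that $\dim(G) \le 2$ iff $G$ has a unit-distance representation in the plane, and observe $\dim(G) \le 1$ is polynomial-time decidable, so hardness of "$\dim(G)\le 2$" is equivalent to hardness of planar unit-distance realizability; (2) describe the rigid gadgets (rigid rod, coincidence enforcer, Boolean-variable gadget) and verify each has the claimed rigidity property in any planar unit-distance embedding — this amounts to a finite case analysis using the fact that the intersection of two unit circles in the plane has at most two points; (3) given a $3$-SAT (or $3$-colorability) instance $I$, wire the gadgets together into $G_I$ of size polynomial in $|I|$, with variable gadgets feeding clause gadgets through rigid rods; (4) prove the correspondence: a satisfying assignment yields an explicit planar unit-distance embedding of $G_I$, and conversely any planar unit-distance embedding of $G_I$ forces each variable gadget into one of its two states, and the clause gadgets are embeddable only if at least one literal is "true," recovering a satisfying assignment; (5) conclude NP-hardness.

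The main obstacle is step (2)–(3): designing gadgets that are genuinely \emph{rigid} in the combinatorial-geometric sense (so that the embedding is forced, not merely possible) while still being realizable, and then ensuring that when many gadgets are glued together no unintended collisions, overlaps, or extra coincidences occur in the plane — the global consistency of the construction is delicate because unit-distance embeddings in $\Reals^2$ are highly constrained and a careless wiring can make $G_I$ non-realizable even for yes-instances. Controlling this typically requires choosing generic angles/positions for the rigid rods and a careful layout (e.g., along a grid derived from a planar embedding of $I$ when reducing from PLANAR-3-SAT), together with a "non-degeneracy" argument showing the gadgets can be placed far enough apart. Once the gadgets and their rigidity are nailed down, the reduction itself and the correctness proof are routine. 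I would remark that the Moser spindle appears precisely here: it is the smallest graph witnessing $\dim < \dim_h$, and its embedding flexibility (two possible distances between the far vertices) is exactly the discrete choice exploited by the Boolean-variable gadget.
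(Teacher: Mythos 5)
Your write-up is a plan rather than a proof: the entire hardness content is deferred to steps (2)--(3), where you would need to design rigid rods, coincidence enforcers and Boolean-variable gadgets, prove their rigidity under all planar unit-distance embeddings, and then establish global consistency of the wiring (no unintended coincidences, genericity of placements). You flag these yourself as the ``main obstacle'' and never carry them out, so nothing is actually established. This matters because the route you chose --- reducing directly from 3-SAT or 3-colourability by building rigid unit-distance frameworks --- is essentially a re-derivation of the Eades--Wormald/Saxe style machinery, which is exactly the delicate part; asserting that it is ``routine'' once the gadgets are ``nailed down'' begs the question, since nailing them down is the theorem.

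The paper avoids all of this by reducing from a result of \Saxe~\cite{Saxe80a}: deciding whether a graph with prescribed edge lengths taken from $\set{1,2}$ embeds in $\Reals^2$ is already NP-hard. Then only one gadget is needed: a graph $H$, built from two Mosers spindles sharing a triangle, with two specified nodes forced to lie at distance exactly $2$ in every unit-distance representation in $\Reals^2$; replacing each length-$2$ edge by a copy of $H$ gives the reduction, with no global layout or genericity argument required. Note also that your reading of the spindle's role is off: the point is not to exploit a binary choice as a Boolean variable, but to \emph{eliminate} the degenerate alternative --- the induced subgraph on $\set{a,b,c,d}$ has two planar unit-distance realizations (the rhombus, or $a$ and $b$ coinciding), and the spindle's remaining edges rule out the coincidence (it would force $\dim(K_4)\leq 2$), so the rhombus, hence a fixed distance, is forced; two such spindles sharing a triangle then pin two nodes at distance $2$. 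If you want a complete proof along your lines you must either supply and verify all gadgets and the global placement argument, or do what the paper does and lean on \Saxe's theorem so that a single distance-$2$ gadget suffices.
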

\begin{proof}
  Let $k$ be a fixed positive integer. \Saxe~\cite{Saxe80a} showed
  that the following problem is NP-hard: given an input graph $G =
  (V,E)$ and $\ell \ffrom E \fto \Reals_+$, decide whether there
  exists $u \ffrom V \fto \Reals^k$ such that
  $\norm{u(i)-u(j)}=\ell_{\set{i,j}}$ for every $\set{i,j} \in E$.
  \Saxe{} showed that the problem remains NP-hard even if we require
  $\ell(E) \subseteq \set{1,2}$.

  We will show a polynomial-time reduction from the above problem with
  $k=2$ and $\ell(E) \subseteq \set{1,2}$ to the problem of deciding
  whether $\dim(G) \leq 2$. It suffices to show how we can replace any
  edge of the input graph~$G$ which is required to be embedded as a
  line segment of length 2 by some gadget graph~$H$ so that every
  unit-distance representation of~$H$ in~$\Reals^2$ maps two specified
  nodes of~$H$ to points at distance~$2$.

  \begin{figure}
    \centering
    \includegraphics{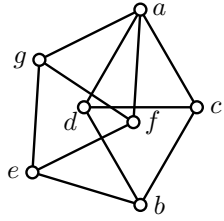}
    \caption{The Mosers spindle; see~\cite{Soifer09a}.}
    \label{fig:moser}
  \end{figure}
  Consider the graph~$M$ known as the Mosers spindle shown in
  Figure~\ref{fig:moser}. The subgraph of~$M$ induced by
  $\set{a,b,c,d}$ has exactly two unit-distance representations
  in~$\Reals^2$ modulo rigid motions: one of them as displayed in
  Figure~\ref{fig:moser}, and the other one maps nodes~$a$ and~$b$ to
  the same point. We claim that, in any unit-distance
  representation~$u$ of~$M$ in~$\Reals^2$, the nodes~$a$ and~$b$ are
  not mapped to the same point. Suppose otherwise. Since the points
  $u(e), u(f), u(g)$ are at distance~$1$ from $u(a) = u(b)$ and from
  each other, $u$ shows that $\dim(K_4) \leq 2$, whereas clearly
  $\dim(K_4) \geq 3$.

  \begin{figure}
    \centering
    \includegraphics{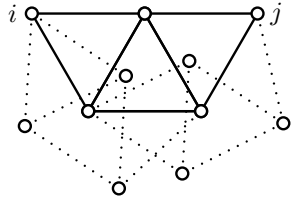}
    \caption{The gadget graph~$H$.}
    \label{fig:gadget}
  \end{figure}
  Let $H$ be the gadget shown in Figure~\ref{fig:gadget}, which
  consists of two copies of~$M$ sharing a triangle (some edges of~$M$
  are drawn in dots for the sake of ease of visualization). Then,
  every unit-distance representation of~$H$ in~$\Reals^2$ maps the
  nodes~$i$ and~$j$ to points at distance~$2$. Thus, if we replace the
  corresponding edges~$\set{i,j}$ of the input graph~$G$ by~$H$, we
  obtain a graph~$G'$ such that $\dim(G') \leq 2$ if and only if $G$
  can be embedded in~$\Reals^2$ with the prescribed edge lengths.
\end{proof}

It follows from Theorem~\ref{thm:dim-hard} that, for any fixed $p \in
[1,\infty]$, the problem of computing $\Ecal_{p}(G;A)$ for an input
graph~$G$ and $A \in \Psd[V(G)]$ is NP-hard. Hence the graph
parameter~$t_b = t$ is in a sense on the borderline of tractability.

\subsection{The extreme cases $p \in \set{1,\infty}$}

For every matrix $U \in \Reals^{V \times V}$, if we set $X
\colonequals \oprodsym{U}$, then there exists an orthogonal $V \times
V$ matrix~$Q$ such that $U^T = QX^{1/2}$. Hence, if $A \in \Psd[V]$,
then
\begin{optproblem}[opt:ellipse-ortho]
  \Ecal_{p}(G;A)
  =
  \inf &
  \norm[p]{\diag(X^{1/2}Q^TAQX^{1/2})} & \\
  & \Lcal_G^*(X) = \ones & \\
  & X \in \Psd[V],\, Q \in \Orth[V]. &
\end{optproblem}
When $p = 1$, the objective function in~\eqref{opt:ellipse-ortho} is
$\trace\paren{Q^TAQX} = \iprod{Q^TAQ}{X}$ so we can write
\begin{equation}
  \label{eq:ellipse1-sdp}
  \Ecal_{1}(G;A)
  =
  \inf_{Q \in \Orth[V]}
  t_{Q^TAQ}(G)
\end{equation}
where $t_W(G)$ is defined for any $W \in \Sym[V]$ as the SDP
\begin{equation}
  \label{eq:tW-sdp}
  t_W(G)
  \colonequals
  \inf\setst[\big]{\iprod{W}{X}}{\Lcal_G^*(X) = \ones,\, X \in \Psd[V]}.
\end{equation}

\begin{proposition}
  \label{prop:tW-finite}
  Let $G = (V,E)$ be a connected graph and let $W \in \Sym[V]$. Then
  $t_W(G)$ is finite if and only if $\qform{W}{\ones} > 0$ or $W\ones
  = 0$. Moreover, whenever $t_W(G)$ is finite, both~\eqref{eq:tW-sdp}
  and its dual SDP have optimal solutions and their optimal values
  coincide.
\end{proposition}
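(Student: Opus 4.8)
The plan is to analyze the SDP~\eqref{eq:tW-sdp} directly through its feasible region and the behaviour of the linear objective $\iprod{W}{X}$ on the recession cone of that region. First I would describe the feasible set $\mathcal{F} \colonequals \setst{X \in \Psd[V]}{\Lcal_G^*(X) = \ones}$. Because $G$ is connected, $\mathcal{F}$ is nonempty (e.g.\ $X = \myhalf I$ works since $\Lcal_G^*(I) = \ones$... more precisely $\Lcal_G^*(\myhalf I)(\set{i,j}) = \myhalf(I_{ii} - 2I_{ij} + I_{jj}) = 1$) and it has a Slater point, namely $X \succ 0$ obtained by a small perturbation, since $\myhalf I$ is already positive definite. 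The recession cone of $\mathcal{F}$ is $\mathcal{R} \colonequals \setst{Z \in \Psd[V]}{\Lcal_G^*(Z) = 0}$. A key structural fact: for $Z \in \Psd[V]$, $\Lcal_G^*(Z) = 0$ means $Z_{ii} - 2Z_{ij} + Z_{jj} = 0$, i.e.\ $(e_i - e_j)^T Z (e_i - e_j) = 0$, for every edge $\set{i,j}$; since $Z \succeq 0$ this forces $Z(e_i - e_j) = 0$ for every edge, and since $G$ is connected this gives $Z(e_i - e_j) = 0$ for \emph{every} pair $i,j$, hence $Z\ones' = 0$ for every vector $\ones'$ in the span of differences, i.e.\ $Z$ is constant on the image... concretely $Z = c\oprodsym{\ones}$ for some $c \geq 0$. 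So $\mathcal{R} = \setst{c\,\oprodsym{\ones}}{c \in \Reals_+}$, a ray.

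Next I would invoke the standard fact (e.g.\ from convex analysis / the theory of linear programming over closed convex cones) that a linear function bounded below on a closed convex set $\mathcal{F}$ with recession cone $\mathcal{R}$ is bounded below if and only if it is nonnegative on $\mathcal{R}$, and that here, since $\mathcal{F}$ is actually an affine section of $\Psd[V]$ with a Slater point, finiteness of the infimum yields attainment and zero duality gap (this is a clean instance of strong duality for conic problems where the primal is strictly feasible and bounded — one can cite the same Slater-point reasoning used repeatedly earlier in the excerpt for~\eqref{eq:t} and its dual). Evaluating the objective on the recession ray: $\iprod{W}{c\,\oprodsym{\ones}} = c\,\qform{W}{\ones}$, which is $\geq 0$ for all $c \geq 0$ exactly when $\qform{W}{\ones} \geq 0$. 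This already gives ``$t_W(G)$ finite $\Rightarrow \qform{W}{\ones}\geq 0$'' and conversely ``$\qform{W}{\ones} > 0 \Rightarrow$ the objective is coercive along the only unbounded direction, hence finite.'' The remaining borderline case is $\qform{W}{\ones} = 0$: here I need to split further, and this is where the stated dichotomy ($\qform{W}{\ones} > 0$ \emph{or} $W\ones = 0$) comes from.

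The main obstacle, then, is the degenerate case $\qform{W}{\ones} = 0$ with $W\ones \neq 0$: I must show $t_W(G) = -\infty$ there. The idea is that although the objective is flat along the pure recession ray $\oprodsym{\ones}$, one can drift within $\mathcal{F}$ in a direction that combines a feasible displacement in $\mathcal{R}$ with a bounded ``tilt'' so that $\iprod{W}{X}$ decreases without bound. Concretely, I would take a feasible $X_0$ and consider $X_0 + c\,\oprodsym{\ones} + (\text{correction})$; the point is to exploit $W\ones \neq 0$ to find a rank-one or low-rank perturbation $H$ with $\Lcal_G^*(H) = 0$ — wait, the only such PSD $H$ is a multiple of $\oprodsym{\ones}$, so instead I allow $H$ to be \emph{indefinite} as long as $X_0 + c\,\oprodsym{\ones} + H \succeq 0$ for large $c$; for instance $H = \oprodsym{\ones}v^T + v\oprodsym{\ones}$-type terms with $\Lcal_G^*$ of it vanishing will do, because $\Lcal_G^*(\ones v^T + v \ones^T)(\set{i,j})$ involves $(e_i-e_j)^T(\ones v^T + v\ones^T)(e_i-e_j) = 2(v_i - v_j)(\ones^T(e_i-e_j)) = 0$ since $\ones^T(e_i - e_j) = 0$; and $\iprod{W}{\ones v^T + v\ones^T} = 2\iprodt{v}{W\ones}$, which can be made negative and arbitrarily large in magnitude by scaling $v$, while positive-definiteness of $X_0 + c\,\oprodsym{\ones}$ for $c$ large dominates the indefinite perturbation $t(\ones v^T + v\ones^T)$ for $t$ small relative to $c$. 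Balancing $c$ and $t$ to keep PSD-ness while sending the objective to $-\infty$ is the one computation I would actually carry out carefully; everything else is bookkeeping. Finally, once finiteness is established in the two good cases, attainment and strong duality follow from the Slater point of~\eqref{eq:tW-sdp} together with boundedness, exactly as in the earlier dual pairs in the paper.
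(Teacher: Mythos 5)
Your recession-cone analysis is correct and useful: for $Z \in \Psd[V]$ with $\Lcal_G^*(Z)=0$, connectivity indeed forces $Z = c\,\oprodsym{\ones}$ with $c \geq 0$, which settles $\qform{W}{\ones}<0 \Rightarrow t_W(G)=-\infty$; and your drift $X_0 + c\,\oprodsym{\ones} + t(\ones v^T + v\ones^T)$ with $\iprodt{v}{(W\ones)}<0$ and $c$ of order $t^2$ is a correct, purely primal way to dispose of the case $\qform{W}{\ones}=0$, $W\ones\neq 0$ (the paper does this through the dual instead, in Proposition~\ref{prop:tW-bounded-below}). The genuine gap is the case $W\ones=0$, which the proposition asserts is a finite case with attainment on both sides: you never prove finiteness there, and the ``standard fact'' you lean on --- a linear function is bounded below on a closed convex set iff it is nonnegative on the recession cone --- is false even for spectrahedra. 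For example, $\setst{(x,y)}{y \geq x^2}$ is the feasible set of the LMI $\left(\begin{smallmatrix} 1 & x \\ x & y \end{smallmatrix}\right)\succeq 0$, the objective $x$ vanishes on its recession cone $\set{0}\times\Reals_+$, yet it is unbounded below there. Since for $W\ones=0$ your objective is only zero (not strictly positive) on the recession ray $\Reals_+\oprodsym{\ones}$, the coercivity/compact-sublevel-set reasoning does not apply, and finiteness genuinely needs an argument: e.g.\ the paper's route of adding the constraint $\qform{X}{\ones}=0$ (equivalently, centering the representation, which leaves $\iprod{W}{X}$ unchanged precisely because $W\ones=0$), after which the recession cone is trivial, the feasible region is compact, and primal finiteness and attainment follow; dual attainment then comes from the primal Slater point (Proposition~\ref{prop:tW-ones-in-NullW}).

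A second, related issue: your claim that primal Slater plus boundedness yields ``attainment and zero duality gap,'' and the closing appeal to ``exactly as in the earlier dual pairs,'' conflates primal and dual attainment. Strict primal feasibility plus a finite value gives a zero gap and attainment in the dual~\eqref{eq:tW-dual}, not in~\eqref{eq:tW-sdp} itself (minimizing $x$ subject to $\left(\begin{smallmatrix} x & 1 \\ 1 & y \end{smallmatrix}\right)\succeq 0$ has a Slater point and infimum $0$, never attained); the earlier dual pairs in the paper had Slater points on \emph{both} sides. In the case $\qform{W}{\ones}>0$ your own observation rescues primal attainment --- the objective is strictly positive on every nonzero recession direction, so sublevel sets of the feasible region are compact --- or one can exhibit a dual Slater point, which is exactly what the paper's Proposition~\ref{prop:tW-Slater} provides (and it characterizes when such a point exists: precisely when $\qform{W}{\ones}>0$). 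You need to make one of these explicit, and you still need the $W\ones=0$ case above to complete the ``moreover'' part of the statement.
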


The parameter~$t_W(G)$ thus underlies the parameters $\Ecal_{1}(G;A)$
as well as the hypersphere number~$t(G)$, since~\eqref{opt:t-dual}
shows that \[ t(G) = \min\setst{t_{\Diag(y)}(G)}{\iprodt{\ones}{y} =
  1,\, y \in \Reals^V}. \]

If $X$ is feasible in~\eqref{eq:tW-sdp} for $G = K_n$, then~$X$ is
completely determined by its diagonal entries. Using this fact, it is
easy to prove that the feasible region of~\eqref{eq:tW-sdp} for $G =
K_n$ is
\begin{multline}
  \label{eq:unit-dist-Kn}
  \setst{X \in \Psd}{\Lcal_{K_n}^*(X)=\ones}
  \\
  =
  \setst{(y\ones^T+\ones y^T+2I)/4}{\norm{\ones}\norm{y} \leq
    \iprodt{\ones}{y}+2,\,y\in \Reals^n}.
\end{multline}
Using a second-order cone programming formulation, we can show that
\begin{equation}
  \label{eq:tW-Kn}
  2t_W(K_n)
  =
  \begin{cases}
    \displaystyle
    \trace(W)
    - \frac{\norm{W\ones}^2}{\qform{W}{\ones}}
    & \text{if }\qform{W}{\ones} > 0 \\
    \trace(W) & \text{if } W\ones = 0 \\
    - \infty & \text{otherwise}.
  \end{cases}
\end{equation}

Let us use~\eqref{eq:ellipse1-sdp} and~\eqref{eq:tW-Kn} to compute
$\Ecal_1(G;A)$. Let $A \in \Psd$ be nonzero. Since $Q\ones \not\in
\Null(A)$ for some $Q \in \Orth[n]$, it follows
from~\eqref{eq:ellipse1-sdp} and~\eqref{eq:tW-Kn} that \[
2\Ecal_{1}(K_n;A) = \trace(A) -
\sup\setst[\Big]{\frac{\norm{Q^TAQ\ones}^2}{\qform{\qform{A}{Q}}{\ones}}}{Q\ones
  \not\in \Null(A),\, Q \in \Orth[n]}. \] The supremum may be replaced
by $\sup\setst{(\qform{A^2}{h})/(\qform{A}{h})}{h \in
  \Null(A)^{\perp}}$, which is easily seen to be $\lambda_{\max}(A)$.
This implies with Theorem~\ref{thm:e-lower-dim} that
\begin{equation}
  \label{eq:ellipse1-Kn}
  \Ecal_{1}(K_n;A)
  =
  \begin{cases}
    \myhalf\sum_{i=1}^{n-1}\lambda_i^{\uparrow}(A)
    &
    \text{if }A \in \Psd[d] \text{ with }d \geq n-1
    \\
    +\infty
    &
    \text{otherwise}.
  \end{cases}
\end{equation}

For the other extreme $p = \infty$, the first property of
hom-monotonicity holds. More precisely, let $(a_n)_{n \in
  \Integers_{++}}$ be a nondecreasing sequence of positive reals.
Define $A_n \colonequals \Diag(a_1,\dotsc,a_n)$ for every $n \in
\Integers_{++}$. Then,
\begin{equation}
  G \to H \implies \Ecal_{\infty}(G;A_n) \leq \Ecal_{\infty}(H;A_n).
\end{equation}
We do not know whether the invariant $\Ecal_{\infty}$ satisfies the
second property of hom-monotonicity. In fact, we do not know an
analytical formula to compute~$\Ecal_{\infty}(K_n;A)$ in terms of~$A$.
However, we have such a formula for an infinite family of complete
graphs, as we now describe. Let $H$ be an $n \times n$ Hadamard
matrix, i.e., $H$ is $\set{\pm 1}$-valued and $H^T H = nI$. We may
assume that $H$ has the form $H^T =
\begin{bmatrix}
  \ones & L^T
\end{bmatrix}.$ Then $L^T L = nI - \oprodsym{\ones}$, so
$\frac{1}{2n}\Lcal_{K_n}^*(L^T L) = \ones$, i.e., the map $i \mapsto
(2n)^{-1/2}Le_i$ is a unit-distance representation of~$K_n$. This map
is called a \textdef{Hadamard representation of~$K_n$}.

\begin{theorem}
  Let $n \in \Integers_{++}$ such that there exists an $n \times n$
  Hadamard matrix. Then, for any $p \in [1,\infty]$ and diagonal $A
  \in \Psd[n-1]$, every Hadamard representation of~$K_n$ is an optimal
  solution for~$\Ecal_p(K_n;A)$.
\end{theorem}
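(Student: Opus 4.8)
The plan is to compute the value that a Hadamard representation of~$K_n$ attains as a feasible point of~\eqref{eq:ellipse}, observe that --- because $A$ is diagonal --- this value is the same for every coordinate, and then match it with a lower bound obtained by reducing to the already-settled case $p=1$ via a standard $\ell_1$--$\ell_p$ norm comparison.

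First I would fix an $n\times n$ Hadamard matrix with $H^T=[\,\ones\mid L^T\,]$ and pass to the matrix form $U\colonequals(2n)^{-1/2}L^T\in\Reals^{V\times[n-1]}$, so that $i\mapsto u_i\colonequals U^Te_i=(2n)^{-1/2}Le_i$ is the associated Hadamard representation. The identity $(2n)^{-1}\Lcal_{K_n}^*(L^TL)=\ones$ recorded just before the statement says exactly that $\Lcal_{K_n}^*(\oprodsym{U})=\ones$, so $U$ is feasible for~\eqref{eq:ellipse}. Writing $A=\Diag(a)$ with $a\in\Reals_+^{n-1}$ and using that every entry of~$L$ lies in~$\set{\pm1}$, we get $\qform{A}{u_i}=\frac{1}{2n}\sum_{k=1}^{n-1}a_kL_{ki}^2=\frac{1}{2n}\sum_{k=1}^{n-1}a_k=\frac{\trace(A)}{2n}$ for every $i\in V$, independently of~$i$. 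Hence $\diag(UAU^T)=\frac{\trace(A)}{2n}\ones$, and the objective value of this representation in~\eqref{eq:ellipse} is $\norm[p]{\diag(UAU^T)}=\frac{\trace(A)}{2n}\norm[p]{\ones}=\frac{\trace(A)}{2n}\,n^{1/p}$, reading $n^{1/\infty}$ as~$1$; in particular this value does not depend on the choice of Hadamard matrix.

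Next I would establish the matching bound $\Ecal_p(K_n;A)\ge\frac{\trace(A)}{2n}\,n^{1/p}$. For any $U\in\Reals^{V\times[n-1]}$ feasible for~\eqref{eq:ellipse}, the elementary inequality $\norm[1]{x}\le n^{1-1/p}\norm[p]{x}$ for $x\in\Reals^n$ (H\"older, with the usual reading at $p=\infty$) gives $\norm[p]{\diag(UAU^T)}\ge n^{1/p-1}\norm[1]{\diag(UAU^T)}$, and since $U$ is also feasible for the infimum defining $\Ecal_1(K_n;A)$ we have $\norm[1]{\diag(UAU^T)}\ge\Ecal_1(K_n;A)$. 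By~\eqref{eq:ellipse1-Kn}, applied with $d=n-1$, $\Ecal_1(K_n;A)=\myhalf\sum_{i=1}^{n-1}\lambda_i^{\uparrow}(A)=\myhalf\trace(A)$. Combining these, $\norm[p]{\diag(UAU^T)}\ge n^{1/p-1}\cdot\myhalf\trace(A)=\frac{\trace(A)}{2n}\,n^{1/p}$, which is exactly the value attained by every Hadamard representation. Therefore every Hadamard representation is an optimal solution for~$\Ecal_p(K_n;A)$, and incidentally $\Ecal_p(K_n;A)=\frac{\trace(A)}{2n}\,n^{1/p}$.

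The only delicate point --- more bookkeeping than a genuine obstacle --- is checking that the bound inherited from the $p=1$ case stays tight for every $p\in[1,\infty]$: the H\"older step $\norm[1]{x}\le n^{1-1/p}\norm[p]{x}$ is an equality precisely for constant vectors~$x$, and it is the combination of the diagonality of~$A$ with the $\set{\pm1}$-valued entries of~$L$ that makes $\diag(UAU^T)$ a constant vector for a Hadamard representation. For a general, non-diagonal $A\in\Psd[n-1]$ only the trace $\sum_{i\in V}\qform{A}{u_i}=\trace(UAU^T)$, hence the $p=1$ value, would be forced, which is why the hypothesis restricts~$A$ to be diagonal. One should also record the conventions at $p=\infty$ (so that $n^{1/p}$ and $\norm[\infty]{\ones}$ both equal~$1$) and note that feasibility of the Hadamard representation and finiteness of $\Ecal_1(K_n;A)$ are already supplied above and by~\eqref{eq:ellipse1-Kn}.
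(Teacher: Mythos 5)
Your proposal is correct and follows essentially the same route as the paper: compute the constant value vector $\frac{\trace(A)}{2n}\ones$ of a Hadamard representation, invoke~\eqref{eq:ellipse1-Kn} to settle $p=1$, and transfer optimality to other $p$ by the norm comparison $\norm[1]{x}\leq n^{1-1/p}\norm[p]{x}$ (the paper states only the $p=\infty$ instance $\norm[1]{x}\leq n\norm[\infty]{x}$ and then asserts the general case). Your explicit H\"older step for intermediate $p$, together with the observation that diagonality of $A$ and the $\set{\pm1}$ entries of $L$ force a constant objective vector, simply makes the paper's terse ``therefore'' precise; no gap.
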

\begin{proof}
  The objective value of the Hadamard representation~$\bar{L}$
  of~$K_n$ in the optimization problem~$\Ecal_p(K_n;A)$ is
  $\sqbrac[\big]{\frac{\trace(A)}{2n}} \norm[p]{\ones}$. Thus,
  $\bar{L}$ is optimal for~$p=1$ by~\eqref{eq:ellipse1-Kn}. From the
  inequality $\norm[1]{x} \leq n\norm[\infty]{x}$ we get
  $\Ecal_{\infty}(K_n;A) \geq \frac{1}{n} \Ecal_1(K_n;A)$, which shows
  that~$\bar{L}$ is optimal for $p = \infty$. Therefore, $\bar{L}$ is
  optimal for every $p \in [1,\infty]$.
\end{proof}

It is natural to lift a Hadamard representation~$h$ of~$K_n$ to obtain
a frugal feasible solution for~$\Ecal(K_{n+1};A)$. The image of~$h$ is
an $(n-1)$-dimensional simplex~$\Delta$. If~$v$ is a vertex of an
$n$-dimensional simplex whose opposite facet is~$\Delta$, then the
line segment~$L$ joining~$v$ to the barycenter of~$\Delta$ is the
shortest line segment joining~$v$ to~$\Delta$. It makes sense to
align~$L$ with the most expensive axis, i.e., the one corresponding
to~$\lambda_{\max}(A)$. Suppose $A = \Diag(a)$ and $\norm[\infty]{a} =
a_n$. We thus obtain a unit-distance representation $u$ of~$K_{n+1}$
in~$\Reals^n$ of the form
\begin{displaymath}
  u(i)
  \colonequals
  \begin{cases}
    h(i) \oplus \alpha, & \text{if }i \in [n] \\
    0 \oplus \sqbrac[\big]{
      \alpha + \paren[\big]{
        \frac{
          n+1
        }{
          2n
        }
      }^{\scriptscriptstyle 1/2}
    }, & \text{if }i = n+1.
  \end{cases}
\end{displaymath}
By optimizing the shift parameter~$\alpha$, we obtain the following
upper bound:
\begin{proposition}
  Let $n \in \Integers_{++}$ such that there exists an $n \times n$
  Hadamard matrix. If $A \in \Psd$, then
  \begin{equation}
    \label{eq:hadamard-plus-one}
    \Ecal_{\infty}(K_{n+1};A)
    \leq
    \frac{
      \trace(A)
    }{
      2(n+1)
    }
    +
    \frac{
      \paren[\big]{
        \trace(A) - n\lambda_{\max}(A)
      }^2
    }{
      8n(n+1) \lambda_{\max}(A)
    }.
  \end{equation}
  Equality holds for $n=2$ if $A \succ 0$.
\end{proposition}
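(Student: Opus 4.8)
The plan is to prove the upper bound~\eqref{eq:hadamard-plus-one} by directly computing the objective value of the displayed representation~$u$ of~$K_{n+1}$ in~$\Ecal_{\infty}(K_{n+1};A)$ and then minimizing over the free parameter~$\alpha$. First I would verify that~$u$ is indeed a unit-distance representation of~$K_{n+1}$: for $i,j \in [n]$ this is inherited from the Hadamard representation~$h$ of~$K_n$ (the extra constant coordinate~$\alpha$ contributes nothing to pairwise differences), and for the edge $\set{i,n+1}$ with $i \in [n]$ one checks that $\norm{h(i)}^2 = \frac{n-1}{2n}$ (since $\frac{1}{2n}L^TL$ has all diagonal entries $\frac{n-1}{2n}$) and that the last-coordinate gap is exactly $\paren[\big]{\frac{n+1}{2n}}^{1/2}$, so $\norm{u(i)-u(n+1)}^2 = \frac{n-1}{2n} + \frac{n+1}{2n} = 1$. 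This is the calibration that fixes the shift between the simplex~$\Delta = h([n])$ and the apex~$u(n+1)$.

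Next I would compute $\qform{A}{u_i}$ for each vertex. Writing $A = \Diag(a)$ with $a_n = \norm[\infty]{a} = \lambda_{\max}(A)$, for $i \in [n]$ we get $\qform{A}{u_i} = \qform{A'}{h(i)} + a_n\alpha^2$, where $A' = \Diag(a_1,\dotsc,a_{n-1})$; by the optimality part of the previous theorem (the Hadamard representation is optimal for every $p$, in particular the values $\qform{A'}{h(i)}$ are constant in~$i$) this equals $\frac{\trace(A')}{2n} + a_n\alpha^2 = \frac{\trace(A)-a_n}{2n} + a_n\alpha^2$, a quantity independent of~$i$. For the apex, $\qform{A}{u_{n+1}} = a_n\paren[\big]{\alpha + \paren[\big]{\frac{n+1}{2n}}^{1/2}}^2$. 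Since all the vertices in~$[n]$ share the common value, the $\ell_\infty$-norm of $(\qform{A}{u_i})_{i}$ is the maximum of that common value and the apex value, and~$\Ecal_{\infty}(K_{n+1};A)$ is bounded above by $\min_{\alpha \in \Reals}$ of this maximum.

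The remaining step is the one-variable optimization. The common value $\frac{\trace(A)-a_n}{2n} + a_n\alpha^2$ is an upward parabola minimized at $\alpha = 0$, while the apex value $a_n\paren[\big]{\alpha + \paren[\big]{\frac{n+1}{2n}}^{1/2}}^2$ is minimized at $\alpha = -\paren[\big]{\frac{n+1}{2n}}^{1/2}$; the pointwise maximum of two such parabolas is minimized where they are equal (assuming the crossing point lies between the two vertices, which it does since the vertices straddle any common value and the functions are monotone on the relevant sides). Setting the two expressions equal gives a quadratic in~$\alpha$; solving it and substituting back into the common-value expression yields, after simplification, exactly the right-hand side of~\eqref{eq:hadamard-plus-one} — the term $\frac{\trace(A)}{2(n+1)}$ together with the correction $\frac{(\trace(A)-n\lambda_{\max}(A))^2}{8n(n+1)\lambda_{\max}(A)}$. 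I would carry out this algebra carefully, noting that it is routine but the bookkeeping with the $\frac{n+1}{2n}$ and $\frac{n-1}{2n}$ factors is where sign and arithmetic slips are likely; that is the main (purely computational) obstacle. Finally, for the equality claim when $n=2$ and $A \succ 0$, I would invoke Theorem~\ref{thm:e-lower-dim} to reduce to $d = n = 2$ and argue that for $K_3$ in~$\Reals^2$ the optimal representation must be an equilateral triangle whose circumcentre coincides with the ellipse centre (by Theorem~\ref{thm:e-attains-and-conv}, $0 \in \conv(u(V))$), so up to rotation — which does not change $\Ecal_\infty$ — the construction above with the optimal~$\alpha$ is forced, giving equality.
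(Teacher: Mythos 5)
Your proof of the inequality in \eqref{eq:hadamard-plus-one} is correct and is exactly the paper's route: the paper obtains the bound by lifting a Hadamard representation of~$K_n$ and optimizing the shift~$\alpha$, leaving the algebra implicit, and your sketch fills it in correctly --- the calibration $\norm{h(i)}^2 = \tfrac{n-1}{2n}$, the constancy $\qform{A'}{h(i)} = \trace(A')/(2n)$ (which, by the way, follows simply from the entries of $L$ being $\pm 1$, not from ``optimality'' of the Hadamard representation), the two-parabola analysis, and the crossing value $\bigl(\trace(A)+n\lambda_{\max}(A)\bigr)^2/\bigl(8n(n+1)\lambda_{\max}(A)\bigr)$, which indeed equals the stated right-hand side.

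The equality claim for $n=2$, however, has a genuine gap. Theorem~\ref{thm:e-attains-and-conv} only guarantees the existence of an optimal representation with $0 \in \conv(u(V))$; it does not place the circumcentre of the triangle at the origin, and that claim is in fact false in general: for $A = \Diag(a_1,a_2)$ with $a_2 = \lambda_{\max}(A)$, the optimal shift in your construction is $\alpha = (a_1-3a_2)/(4\sqrt{3}\,a_2)$, while the circumcentre lies at the origin only when $\alpha = -\sqrt{3}/6$, i.e.\ only when $a_1 = a_2$. More importantly, ``up to rotation'' is not a harmless reduction: rotating the triangle while keeping $A$ fixed changes $\max_{i} \qform{A}{u_i}$ whenever $A$ is not a multiple of~$I$ (what is invariant is $\Ecal_p(G;Q^TAQ) = \Ecal_p(G;A)$, which rotates $A$ together with the representation). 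The substance of the equality statement is a lower bound: one must show that over \emph{all} placements of the unit equilateral triangle --- a rotation angle plus two translation components --- none achieves a smaller value than the configuration with one side perpendicular to the $\lambda_{\max}$-axis and the optimized~$\alpha$. Your proposal asserts this configuration is ``forced'' without proof; that missing minimization over the angle (and the horizontal translation) is precisely what the paper does via the parametrization of~$\Orth[2]$ and trigonometry. Note also that your argument never uses the hypothesis $A \succ 0$, a further sign that the equality case has not actually been established.
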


The proof of equality for $n=2$ involves the obvious parametrization
of~$\Orth[2]$ and basic trigonometry.

\appendix

\section{Proofs of Propositions~\ref{prop:t-contraction}
  and~\ref{prop:nbhood} and Equations~\eqref{eq:unit-dist-Kn}
  and~\eqref{eq:tW-Kn}}
\label{sec:delayed}

\begin{proof}[Proof of Proposition~\ref{prop:t-contraction}]
  Let $(\yb,\zb)$ be an optimal solution for~\eqref{opt:t-dual}. We
  will construct a feasible solution for~\eqref{opt:t-dual} applied to
  $G/e$ with objective value $t(G) - \zb_e$. Assume $e = \set{a,b}$
  and $V' \colonequals V(G/e) = V \drop \set{b}$, so we are denoting
  the contracted node of~$G/e$ by~$a$. Let $P$ be the $V' \times V$
  matrix defined by $P \colonequals e_a e_b^T + \sum_{i \in V'}
  \oprodsym{e_i}$. Then $P \Lcal_G(\zb) P^T = \Lcal_{G/e}(\zh)$, where
  $\zh \in \Reals^{E(G/e)}$ is obtained from $\zb$ as follows. In
  taking the contraction $G/e$ from~$G$, immediately after we identify
  the ends of~$e$, but before we remove resulting parallel edges,
  there are at most two edges between each pair of nodes of $G/e$, as
  we assume that $G$ is simple. If there is exactly one edge between
  nodes $i$ and~$j$, we just set $\zh_{\set{i,j}} \colonequals
  \zb_{\set{i,j}}$. If there are two edges joining nodes~$i$ and~$j$,
  say~$f$ and~$f'$, we put $\zh_{\set{i,j}} \colonequals \zb_f +
  \zb_{f'}$.

  Similarly, if we define $\yh \ffrom V' \fto \Reals$ by putting
  $\yh_i \colonequals \yb_i$ for $i \in V' \drop \set{a}$ and $\yh_a
  \colonequals \yb_a + \yb_b$, then $P \Diag(\yb) P^T = \Diag(\yh)$.
  Since $P\, \Psd[V]\, P^T \subseteq \Psd[V']$, we see that
  $(\yh,\zh)$ is a feasible solution for~\eqref{opt:t-dual} applied
  to~$G/e$, and its objective value is $\zh(E(G/e)) = \zb(E) - \zb_e$.

  To prove the inequality involving~$\overline{\vartheta}(G)$,
  use~\eqref{eq:t-theta} together with its proof to see that $\Xb$
  corresponds to an optimal solution $(\yb,\zb)$
  for~\eqref{opt:t-dual} with $\Xb/\overline{\vartheta}(G) =
  \Diag(\yb) - \Lcal_G(\zb)$, so $\yb_e =
  \Xb_{ij}/\overline{\vartheta}(G)$.
\end{proof}

\begin{proof}[Proof of Proposition~\ref{prop:nbhood}]
  By Theorem~\ref{thm:t-theta}, it suffices to show $t(G[N(i)]) \leq
  1-1/\sqbrac{4t(G)}$. Let $p \ffrom V \fto \Reals^d$ be a hypersphere
  representation of~$G$ with squared radius $t \colonequals t(G)$. We
  may assume that $p(i) = t^{1/2} e_1$. For every $j \in N(i)$, we
  have $1 = \norm{p(i)-p(j)}^2 = \norm{p(i)}^2 + \norm{p(j)}^2 -
  2\iprod{p(i)}{p(j)} = 2t - 2t^{1/2} [p(j)]_1$. Hence, $[p(j)]_1 =
  (2t-1)/(2t^{1/2}) \equalscolon \beta$ for every $j \in N(i)$. Define
  the following hypersphere representation of $G[N(i)]$: for each $j
  \in N(i)$, let $q(j)$ be obtained from $p(j)$ by dropping the first
  coordinate. The squared radius of the resulting hypersphere
  representation is $t-\beta^2 = 1-1/(4t)$.
\end{proof}

\begin{proof}[Proof of~\eqref{eq:unit-dist-Kn}]
  Let $X \in \Sym[V]$. Then $\Lcal_{K_n}^*(X) = \ones$ if and only if
  $4X = y\ones^T + \ones y^T + 2I$ for some $y \in \Reals^V$; for the
  `only if' part, use $y \colonequals 2\diag(X) - \ones$.

  Let $y \in \Reals^V$. The smallest eigenvalue of $y\ones^T + \ones
  y^T$ is $\iprodt{\ones}{y} - \norm{\ones}\norm{y}$. Thus, $y\ones^T
  + \ones y^T + 2I \succeq 0$ if and only if $\norm{\ones} \norm{y}
  \leq \iprodt{\ones}{y} + 2$.
\end{proof}

\begin{proof}[Proof of \eqref{eq:tW-Kn}]
  Assume first that $W = \Diag(w)$ for some $w \in \Reals^n$. By
  Proposition~\ref{prop:tW-finite}, finiteness of $t_W(K_n)$ implies
  $\iprodt{\ones}{w} > 0$ or $w = 0$. Assume the former.
  By~\eqref{eq:unit-dist-Kn},
  \begin{equation}
    \label{eq:tW-Kn-SOC}
    2t_W(K_n)
    =
    \iprodt{\ones}{w}
    +
    \min\setst{
      \iprodt{w}{y}
    }{
      \norm{\ones}y_0
      -
      \iprodt{\ones}{y}
      =
      2,\,
      y_0 \oplus y \in \soc{n}
    },
  \end{equation}
  where $\soc{n} \colonequals \setst{y_0 \oplus y \in \Reals \oplus
    \Reals^n}{\norm{y} \leq y_0}$. The second-order cone program on
  the RHS of~\eqref{eq:tW-Kn-SOC} has $\yb_0 \oplus \yb \colonequals
  (2 + \norm{\ones}^2)/\norm{\ones} \oplus \ones$ as a Slater point,
  and its dual is $\max\setst{2\mu}{-\mu\norm{\ones} \oplus
    (w+\mu\ones) \in \soc{n},\, \mu \in \Reals}$. Since $\mu^*
  \colonequals -\norm{w}^2/(2\iprodt{\ones}{w})$ is optimal for the
  dual, it follows that
  \begin{equation}
    \label{eq:tW-Kn-W-diag}
    2t_{\Diag(w)}(K_n)
    =
    \begin{cases}
      \iprodt{\ones}{w}
      -
      \norm{w}^2
      /
      (\iprodt{\ones}{w})
      &
      \text{if }\iprodt{\ones}{w} > 0
      \\
      0 & \text{if }w = 0
      \\
      - \infty & \text{otherwise}.
    \end{cases}
  \end{equation}

  Now we drop the diagonal assumption, so let $W \in \Sym$ such that
  $\qform{W}{\ones} > 0$ or $W\ones = 0$. For $y \in \Reals^n$, we can
  write $ \iprod{W}{y\ones^T + \ones y^T} = \iprod{W\ones}{2y} =
  \iprod{\Diag(W\ones)}{y\ones^T + \ones y^T}$, so
  $\iprod{W}{y\ones^T+\ones y^T +2I} = \iprod{\Diag(W\ones)}{y\ones^T
    + \ones y^T + 2I} - 2\qform{W}{\ones} + 2\trace(W)$, i.e., \[
  4t_W(K_n) = 4t_{\Diag(W\ones)}(K_n) - 2\qform{W}{\ones} +
  2\trace(W) \] by~\eqref{eq:unit-dist-Kn}. Hence, \eqref{eq:tW-Kn}
  follows from~\eqref{eq:tW-Kn-W-diag}.
\end{proof}

\section{Proofs for the sake of completeness}

\begin{proof}[Proof of Proposition~\ref{prop:gijswijt}]
  If $\Xh_{ii} = 0$ for some $i \in V$, then $\Xh e_i = 0$ and we are
  done by induction on~$n$. So we may assume that $\diag(\Xh) > 0$.
  Define $x \in \Reals^n$ by $x_i \colonequals \Xh_{ii}^{1/2}$ and let
  $\Xb \colonequals \Diag(x)^{-1} \Xh \Diag(x)^{-1}$. Note that $\Xb
  \in \mathbb{K} \cap \Psd$ and $\diag(\Xb) = \ones$.

  For every $h \in \Reals_+^n$ with $\norm{h} = 1$, the matrix
  $\Diag(h) \Xb \Diag(h)$ is feasible in the optimization problem with
  objective value $\qform{\Xb}{h}$. Since $\Xh = \Diag(x) \Xb
  \Diag(x)$ is an optimal solution, we see that $h = x$ attains the
  maximum of $\qform{\Xb}{h}$ over all $h \in \Reals_+^n$ with
  $\norm{h} = 1$. Since $x > 0$, then $h = x$ attains the maximum
  of~$\qform{\Xb}{h}$ also over all $h \in \Reals^n$ with $\norm{h} =
  1$. Thus, for $\lambda \colonequals \lambda_{\max}(\Xb)$, we have
  $\Xb x = \lambda x$, so $\Xh \ones = \Diag(x) \Xb \Diag(x) \ones =
  \Diag(x) \Xb x = \lambda \Diag(x) x = \lambda \diag(\Xh)$.
\end{proof}

\begin{proof}[Proof of~\eqref{opt:ellipse-ortho}]
  If $(X,Q)$ is a feasible solution for the RHS
  of~\eqref{opt:ellipse-ortho}, then $U^T \colonequals Q X^{1/2}$ is
  feasible in~\eqref{eq:ellipse} and has objective value
  $\norm[p]{\diag(UAU^T)} = \norm[p]{\diag(X^{1/2}Q^TAQX^{1/2})}$,
  which is the objective value of~$(X,Q)$ in the RHS
  of~\eqref{opt:ellipse-ortho}.

  Let $U$ be a feasible solution for~\eqref{eq:ellipse}. Let $X
  \colonequals \oprodsym{U}$. Then $X^{1/2} = QU^T$ for some $Q \in
  \Orth[V]$. The objective value of~$(X,Q^T)$ in the RHS
  of~\eqref{opt:ellipse-ortho} is
  $\norm[p]{\diag(X^{1/2}QAQ^TX^{1/2})} = \norm[p]{\diag(UAU^T)}$,
  which is the objective value of~$U$ in~\eqref{eq:ellipse}.
\end{proof}

\subsection{Proof of Proposition~\ref{prop:tW-finite}}

\begin{proposition}
  \label{prop:tW-Slater}
  Let $G = (V,E)$ be a connected graph and let $W \in \Sym[V]$. Then
  there exists $z \in \Reals^E$ such that $\Lcal_G(z) \prec W$ if and
  only if $\qform{W}{\ones} > 0$.
\end{proposition}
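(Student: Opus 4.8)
The plan is to prove the two implications separately. The forward implication uses only that the Laplacian annihilates $\ones$; the reverse implication reduces to the standard spectral fact that, since $G$ is connected, the unweighted Laplacian $\Laplacian{\ones}$ (with $\ones \in \Reals^{E}$ the all-ones vector on edges) is positive definite on the hyperplane $\ones^{\perp} \subseteq \Reals^{V}$.

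\emph{Necessity of $\qform{W}{\ones}>0$.} Since $\Laplacian{z}$ is a linear combination of the rank-one matrices $\oprodsym{(e_{i}-e_{j})}$, each of which kills $\ones$, we have $\Laplacian{z}\ones = 0$ and hence $\qform{\Laplacian{z}}{\ones} = 0$ for every $z \in \Reals^{E}$. Thus, if $\Laplacian{z} \prec W$, then $W - \Laplacian{z} \succ 0$ forces $\qform{W}{\ones} = \qform{(W-\Laplacian{z})}{\ones} > 0$ because $\ones \neq 0$.

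\emph{Sufficiency.} Suppose $c \colonequals \qform{W}{\ones} > 0$. The idea is to take $z \colonequals -t\ones \in \Reals^{E}$, i.e.\ uniform \emph{negative} edge-weights, with the scalar $t > 0$ to be chosen large, and to check that $W - \Laplacian{z} = W + t\,\Laplacian{\ones} \succ 0$. (Large positive weights would be useless: because $\Laplacian{z}\ones = 0$ for every $z$, one has no control over the $\ones$-direction of $W - \Laplacian{z}$, which is exactly why $\qform{W}{\ones}>0$ is forced in the first place; granted that, one can afford to overwhelm $W$ on $\ones^{\perp}$.) Decompose each $x \in \Reals^{V}$ as $x = x_{\parallel} + x_{\perp}$ with $x_{\parallel} \in \linspan\set{\ones}$ and $x_{\perp} \perp \ones$. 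Connectedness of $G$ gives $\qform{\Laplacian{\ones}}{x} = \qform{\Laplacian{\ones}}{x_{\perp}} \geq \lambda_{2}\norm{x_{\perp}}^{2}$, where $\lambda_{2} > 0$ is the least eigenvalue of $\Laplacian{\ones}$ restricted to $\ones^{\perp}$. Expanding $\qform{W}{x}$ along this decomposition and controlling the cross term $2\,x_{\parallel}^{T}Wx_{\perp}$ by the AM--GM inequality yields a bound of the shape $\qform{W}{x} \geq \frac{c}{2n}\norm{x_{\parallel}}^{2} - C\norm{x_{\perp}}^{2}$ with $C \geq 0$ depending only on $W$. Adding the two estimates, $\qform{(W + t\,\Laplacian{\ones})}{x} \geq \frac{c}{2n}\norm{x_{\parallel}}^{2} + (t\lambda_{2} - C)\norm{x_{\perp}}^{2}$, which is strictly positive for every nonzero $x$ as soon as $t > C/\lambda_{2}$. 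Hence $\Laplacian{-t\ones} \prec W$, and since the construction is explicit I would record the resulting $z$ in the proof.

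The conceptual crux is the observation that $\Laplacian{z}\ones = 0$ regardless of $z$: this simultaneously forces the condition $\qform{W}{\ones}>0$ and dictates that only the $\ones^{\perp}$ part needs to be handled, which connectedness makes tractable. The step I would be most careful about is that one \emph{cannot} simply dominate $W$ by $\frac{c}{n}\oprodsym{\ones} + t\,\Laplacian{\ones}$ in the semidefinite order, since the $\ones$-eigenvalue of the latter stays fixed at $c$ while $W$ may be large in directions close to $\ones$; the AM--GM (equivalently Schur-complement) estimate on the cross term is precisely what closes this gap.
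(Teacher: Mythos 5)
Your proposal is correct and takes essentially the same route as the paper: both directions hinge on $\Lcal_G(z)\ones = 0$, and for sufficiency both choose $z$ as a large negative multiple of $\ones$ and use connectedness to make the Laplacian positive definite on $\ones^{\perp}$. The only difference is cosmetic: the paper verifies $W - \lambda\Lcal_G(\ones) \succ 0$ via a Schur complement in an orthogonal basis whose first vector is $n^{-1/2}\ones$, while you split $x = x_{\parallel} + x_{\perp}$ and control the cross term by AM--GM, which, as you note yourself, is the same estimate.
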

\begin{proof}
  If $W \succ \Lcal_G(z)$ for some $z \in \Reals^E$, then
  $\qform{W}{\ones} = \qform{(W-\Lcal_G(z))}{\ones} > 0$.

  Suppose that $\qform{W}{\ones} > 0$. Let $L := \Lcal_G(\ones)$ and
  assume $V = [n]$. Let $Q \in \Orth[n]$ such that $Qe_1 = n^{-1/2}
  \ones$. Then $Q^T L Q = 0 \oplus L'$ for some $L' \in \Pd[n-1]$,
  since $G$. Let $A \in \Sym[n-1]$, $b \in \Reals^{n-1}$ and $\gamma
  \in \Reals$ such that
  \[
  Q^T W Q
  =
  \begin{bmatrix}
    \gamma & b^T \\
    b & A
  \end{bmatrix}.
  \]
  Note that $\gamma = e_1^T Q^T W Q e_1 = n^{-1}\qform{W}{\ones} > 0$.
  Thus, for every $\lambda \in \Reals$, we have $Q^T(W-\lambda L)Q
  \succ 0$ if and only if $ A- \lambda L' - \gamma^{-1}\oprodsym{b}
  \succ 0$. Since $L' \succ 0$, we know that for $\lambda$ negative
  and with sufficiently large magnitude, we have $Q^T(W-\lambda L)Q
  \succ 0$, and hence $W \succ \Lcal_G(\lambda \ones)$.
\end{proof}

\begin{proposition}
  \label{prop:tW-bounded-below}
  Let $G$ be a graph and let $W \in \Sym[V(G)]$ such that
  $\qform{W}{\ones} = 0$. If $t_W(G) > -\infty$, then $\ones \in
  \Null(W)$.
\end{proposition}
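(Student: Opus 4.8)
The plan is to reduce the statement to the complete graph and then invoke the explicit description~\eqref{eq:unit-dist-Kn} of the unit-distance representations of $K_n$. Put $n := \card{V(G)}$ and identify $V(G)$ with $[n]$. Since $E(G) \subseteq E(K_n)$, every $X \in \Psd$ satisfying $\Lcal_{K_n}^*(X) = \ones$ also satisfies $\Lcal_G^*(X) = \ones$; hence the feasible region of~\eqref{eq:tW-sdp} for $K_n$ is contained in the feasible region for $G$, so $t_W(G) \leq t_W(K_n)$. Therefore it suffices to prove that if $\qform{W}{\ones} = 0$ and $W\ones \neq 0$, then $t_W(K_n) = -\infty$; this contradicts the hypothesis $t_W(G) > -\infty$ and forces $W\ones = 0$, that is, $\ones \in \Null(W)$.

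To prove $t_W(K_n) = -\infty$ under those hypotheses I would argue as follows. By~\eqref{eq:unit-dist-Kn}, for every $y \in \Reals^n$ with $\norm{\ones}\norm{y} \leq \iprodt{\ones}{y} + 2$ the matrix $X_y := (y\ones^T + \ones y^T + 2I)/4$ is feasible for $t_W(K_n)$, and using $\iprod{W}{y\ones^T} = \iprod{W}{\ones y^T} = \iprodt{(W\ones)}{y}$ one gets the objective value $\iprod{W}{X_y} = \myhalf\iprodt{(W\ones)}{y} + \myhalf\trace(W)$. Now feed in the parabolic family $y_\alpha := \alpha\ones - c\alpha^{1/2}(W\ones)$, where $c := 2/\norm{W\ones}$ and $\alpha \to +\infty$. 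The hypothesis $\qform{W}{\ones} = 0$ says exactly that $\ones$ and $W\ones$ are orthogonal, so $\iprodt{\ones}{y_\alpha} = \alpha n$ and $\norm{y_\alpha}^2 = \alpha^2 n + 4\alpha$; squaring, the feasibility constraint $\norm{\ones}\norm{y_\alpha} \leq \iprodt{\ones}{y_\alpha} + 2$ reduces to $0 \leq 4$ and so holds for all $\alpha \geq 0$. Meanwhile $\iprodt{(W\ones)}{y_\alpha} = -2\alpha^{1/2}\norm{W\ones}$, so $\iprod{W}{X_{y_\alpha}} = \myhalf\trace(W) - \alpha^{1/2}\norm{W\ones} \to -\infty$ since $\norm{W\ones} > 0$.

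The part that requires care — and the reason a shorter argument fails — is that this unboundedness cannot be realized along a straight ray inside the feasible region. For connected $G$, a direction $D \in \Psd$ with $\Lcal_G^*(D) = 0$ must satisfy $D(e_i - e_j) = 0$ for every edge $\{i,j\}$, hence (by connectedness and symmetry of $D$) $D$ is a nonnegative multiple of $\oprodsym{\ones}$, and along all such directions the objective is constant because $\iprod{W}{\oprodsym{\ones}} = \qform{W}{\ones} = 0$. One genuinely needs the $\alpha^{1/2}$-scaled perturbation: the term $\alpha\ones$ inflates $\iprodt{\ones}{y}$ and $\norm{\ones}\norm{y}$ at matching linear rates, and, once $\qform{W}{\ones} = 0$ kills the cross term, the residual slack (of order $\alpha^{1/2}$) is precisely enough to pay for a descent of order $\alpha^{1/2}$ in the direction $-W\ones$. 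I would also take care to use only the elementary identity~\eqref{eq:unit-dist-Kn} and not the closed form~\eqref{eq:tW-Kn}, whose derivation relies on Proposition~\ref{prop:tW-finite} and would therefore be circular here.
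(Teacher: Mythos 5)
Your proof is correct, but it takes a genuinely different route from the paper's. The paper argues by duality: since \eqref{eq:tW-sdp} has the Slater point $\myhalf I$, the hypothesis $t_W(G) > -\infty$ forces the dual \eqref{eq:tW-dual} to attain an optimal solution $z$, and then $W - \Lcal_G(z) \succeq 0$ together with $\qform{(W-\Lcal_G(z))}{\ones} = \qform{W}{\ones} = 0$ puts $\ones$ in the null space of $W - \Lcal_G(z)$, whence $W\ones = \Lcal_G(z)\ones = 0$. You instead work entirely on the primal side: the monotonicity $t_W(G) \leq t_W(K_n)$ (the $K_n$-feasible set sits inside the $G$-feasible set, so the infimum over the larger set is no bigger) reduces everything to the complete graph, where the explicit parametrization \eqref{eq:unit-dist-Kn} lets you exhibit the feasible family $y_\alpha = \alpha\ones - 2\alpha^{1/2}W\ones/\norm{W\ones}$ whose objective value $\myhalf\trace(W) - \alpha^{1/2}\norm{W\ones}$ tends to $-\infty$ whenever $W\ones \neq 0$; your computations check out (the orthogonality $\qform{W}{\ones}=0$ kills the cross terms, and the squared feasibility constraint does reduce to $0 \leq 4$, with both sides nonnegative so squaring is legitimate). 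Your approach buys elementarity: no appeal to SDP strong duality or dual attainment, only the self-contained identity \eqref{eq:unit-dist-Kn}, and your observation that the unboundedness cannot be realized along a recession ray but only along a ``parabolic'' curve is a genuine insight that the duality proof hides. The paper's argument buys brevity and works directly on an arbitrary $G$ without passing through $K_n$. You were also right to steer clear of \eqref{eq:tW-Kn}, whose derivation invokes Proposition~\ref{prop:tW-finite} and would indeed make the argument circular.
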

\begin{proof}
  Assume $t_W(G) > -\infty$. Since~\eqref{eq:tW-sdp} has
  $\frac{1}{2}I$ as a Slater point, the dual
  \begin{equation}
    \label{eq:tW-dual}
    \sup\setst[\big]{
      \iprodt{\ones}{z}
    }{
      W \succeq \Lcal_G(z),\,
      z \in \Reals^E
    }.
  \end{equation}
  of~\eqref{eq:tW-sdp} has an optimal solution~$z$. Assume $V = [n]$
  and let $Q \in \Orth[n]$ such that $Qe_1 = n^{-1/2}\ones$. Then
  $Q^T(W-\Lcal_G(z))Q \succeq 0$ and $e_1^T Q^T(W-\Lcal_G(z))Q e_1 =
  n^{-1}\qform{(W-\Lcal_G(z))}{\ones} = 0$ imply that
  \[ e_k^TQ^TW\ones = e_k^TQ^T(W-\Lcal_G(z))\ones = n^{1/2} e_k^T
  Q^T(W-\Lcal_G(z))Qe_1 = 0 \] for every $k \in [n]$. Thus, $W \ones \in
  \set{Qe_2,\dotsc,Qe_n}^{\perp} = \set{\ones}^{\perp\perp}$, which
  together with $\qform{W}{\ones} = 0$ implies $W\ones = 0$.
\end{proof}

\begin{proposition}
  \label{prop:tW-ones-in-NullW}
  Let $G$ be a connected graph and let $W \in \Sym[V(G)]$ such that
  $W\ones = 0$. Then~\eqref{eq:tW-sdp} and~\eqref{eq:tW-dual} have
  optimal solutions and the optimal values coincide.
\end{proposition}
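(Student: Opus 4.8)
The plan is to pass to $\ones^\perp$. The obstruction is that \eqref{eq:tW-dual} itself has no Slater point: $W\ones = 0$ forces $\qform{W}{\ones} = 0$, so Proposition~\ref{prop:tW-Slater} rules out any $z$ with $\Lcal_G(z) \prec W$. But $\ones$ lies in the kernel of $W$ and of every Laplacian $\Lcal_G(z)$, so this obstruction is confined to a single direction, which we quotient out. Fix $Q \in \Orth[V]$ with $Qe_1 = n^{-1/2}\ones$, where $n \colonequals \card{V}$. Since $W\ones = 0$ and $\Lcal_G(z)\ones = 0$ for every $z \in \Reals^E$, the matrices $Q^TWQ$ and $Q^T\Lcal_G(z)Q$ have vanishing first row and column, so $Q^TWQ = 0 \oplus W'$ and $Q^T\Lcal_G(z)Q = 0 \oplus L(z)$ for some $W' \in \Sym[n-1]$ and some linear map $L \ffrom \Reals^E \fto \Sym[n-1]$. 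Writing $X'$ for the trailing $(n-1)\times(n-1)$ principal block of $Q^TXQ$, a short adjoint computation gives $\Lcal_G^*(X) = L^*(X')$ for every $X \in \Sym[V]$, and, since conjugation by the orthogonal matrix $Q$ preserves positive semidefiniteness, $W \succeq \Lcal_G(z) \iff W' \succeq L(z)$.

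Using these identities I would rewrite the primal--dual pair \eqref{eq:tW-sdp}--\eqref{eq:tW-dual} as
\[
  \inf\setst[\big]{\iprod{W'}{X'}}{L^*(X') = \ones,\, X' \in \Psd[n-1]}
  \qquad\text{and}\qquad
  \sup\setst[\big]{\iprodt{\ones}{z}}{W' - L(z) \succeq 0,\, z \in \Reals^E}.
\]
The reduced dual coincides with \eqref{eq:tW-dual} by the equivalence just noted. For the primal side, the optimal value of \eqref{eq:tW-sdp} equals that of the reduced primal: `$\geq$' holds because the trailing block of any feasible $X$ is feasible for the reduced primal with the same objective value (a principal submatrix of a positive semidefinite matrix is positive semidefinite), and `$\leq$' holds because any feasible $X'$ lifts to $X \colonequals Q(0 \oplus X')Q^T \in \Psd[V]$, which satisfies $\Lcal_G^*(X) = \ones$ and $\iprod{W}{X} = \iprod{W'}{X'}$. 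The same lift carries an optimal $X'$ to an optimal solution of \eqref{eq:tW-sdp}, and any optimal $z$ for the reduced dual is already optimal for \eqref{eq:tW-dual}; so it suffices to establish strong duality with attainment on both sides for the reduced pair.

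The payoff of the reduction is that the reduced pair is strictly feasible on both sides. Applying $\Lcal_G^*(X) = L^*(X')$ with $X = \myhalf I$, whose trailing block is $\myhalf I_{n-1}$, and using $\Lcal_G^*(\myhalf I) = \ones$, we obtain $L^*(\myhalf I_{n-1}) = \ones$; since $\myhalf I_{n-1} \succ 0$, this is a Slater point of the reduced primal. On the dual side, $L(\ones)$ is the trailing block of $Q^T\Lcal_G(\ones)Q$, and $\Lcal_G(\ones)$ is the Laplacian of the \emph{connected} graph $G$, hence positive semidefinite with kernel exactly $\linspan(\ones) = \linspan(Qe_1)$; therefore $L(\ones) \in \Pd[n-1]$, and $z \colonequals -\lambda\ones$ gives $W' - L(z) = W' + \lambda L(\ones) \succ 0$ once $\lambda$ is large enough, a Slater point of the reduced dual. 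As used repeatedly in this paper, when an SDP and its dual are both strictly feasible, both optima are attained and the optimal values coincide; transporting this back through the identities above yields the claim. The one delicate part is the reduction bookkeeping---verifying the block decompositions under conjugation by $Q$ and tracking objective values and semidefinite constraints through them---with connectedness of $G$ entering exactly once, to guarantee $L(\ones) \succ 0$.
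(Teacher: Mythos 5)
Your proof is correct, but it follows a genuinely different route from the paper's. You quotient out the degenerate direction $\ones$ explicitly: conjugating by an orthogonal $Q$ with $Qe_1 = n^{-1/2}\ones$, you pass to an $(n-1)$-dimensional primal--dual pair in which \emph{both} problems are strictly feasible (the primal via $\myhalf I_{n-1}$, the dual via $z = -\lambda\ones$ together with positive definiteness of the reduced Laplacian of the connected graph~$G$), then invoke the standard strong duality fact and transport values and optimal solutions back through the block decomposition. The paper instead stays in $\Sym[V]$: it adds the redundant constraint $\qform{X}{\ones} = 0$ to~\eqref{eq:tW-sdp}, notes that the dual of the augmented problem --- which carries an extra rank-one term $\mu\oprodsym{\ones}$ --- has a Slater point by Proposition~\ref{prop:tW-Slater}, deduces attainment in~\eqref{eq:tW-sdp}, and then gets attainment in~\eqref{eq:tW-dual} and equality of values from the primal Slater point $\myhalf I$ and boundedness. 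Both arguments exploit the same degeneracy ($\ones$ lies in the kernel of $W$ and of every $\Lcal_G(z)$), but yours is a facial-reduction-style dimension reduction that makes both sides strictly feasible at once and bypasses Proposition~\ref{prop:tW-Slater} entirely --- connectedness enters through $\Lcal_G(\ones)$ being positive definite on $\ones^{\perp}$, which is essentially the content of that proposition --- at the cost of the block-decomposition bookkeeping you acknowledge; the paper's version is shorter on the page because it reuses the already-proved Slater-point criterion rather than redoing the reduction by hand.
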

\begin{proof}
  Since $W \ones = 0$, it is easy to check that the constraint
  $\qform{X}{\ones} = 0$ may be added to~\eqref{eq:tW-sdp} without
  changing the optimal value. The dual of this augmented SDP is $
  \sup\setst[\big]{ \iprodt{\ones}{z} }{ W - \mu\oprodsym{\ones}
    \succeq \Lcal_G(z),\, z \in \Reals^E,\, \mu \in \Reals }$. By
  Proposition~\ref{prop:tW-Slater}, this dual has a Slater point
  $(z,\mu)$ with $\mu = 1$, so~\eqref{eq:tW-sdp} has an optimal
  solution. Since~\eqref{eq:tW-sdp} has a Slater point and is bounded
  below, its dual~\eqref{eq:tW-dual} has an optimal solution and the
  optimal values coincide.
\end{proof}

\begin{proof}[Proof of Proposition~\ref{prop:tW-finite}]
  If $\qform{W}{\ones} > 0$, then \eqref{eq:tW-sdp} and its
  dual~\eqref{eq:tW-dual} have Slater points by
  Proposition~\ref{prop:tW-Slater}. If $\qform{W}{\ones} < 0$, then
  $X_t \colonequals \myhalf I + t\oprodsym{\ones}$ with $t \to \infty$
  shows that $t_W(G) = -\infty$. Assume now that $\qform{W}{\ones} =
  0$. If $W\ones \neq 0$, then $t_W(G) = -\infty$. Otherwise, apply
  Proposition~\ref{prop:tW-ones-in-NullW}.
\end{proof}

\end{document}